\numberwithin{equation}{section}
\newtheorem{thm}{Theorem}[section]
\newcommand{\bt}{\begin{thm}}
\newcommand{\et}{\end{thm}}
\newtheorem{cor}[thm]{Corollary}   
\newcommand{\bc}{\begin{cor}}
\newcommand{\ec}{\end{cor}}
\newtheorem{lem}[thm]{Lemma}   
\newcommand{\bl}{\begin{lem}}
\newcommand{\el}{\end{lem}}
\newtheorem{prop}[thm]{Proposition}
\newcommand{\bp}{\begin{prop}}
\newcommand{\ep}{\end{prop}}
\newtheorem{defn}[thm]{Definition}
\newcommand{\bd}{\begin{defn}}    
\newcommand{\ed}{\end{defn}}
\newtheorem{rmrk}[thm]{Remark}   
\newcommand{\br}{\begin{rmrk}}
\newcommand{\er}{\end{rmrk}}
\newtheorem{example}[thm]{Example}
\newcommand{\be}{\begin{equation}}
\newcommand{\ee}{\end{equation}}
\newcommand{\R}{\mathbb{R}}
\newcommand{\E}{\mathbb{E}}
\newcommand{\diam}{\operatorname{diam}}
\newcommand{\set}{\rm{set}}
\newcommand{\Ricci}{\rm{Ricci}}
\newcommand{\disjointunion}{\sqcup}
\newcommand{\mass}{{\mathbf M}}
\newcommand{\intcurr}{{\mathbf I}}      
\newcommand{\vol}{\operatorname{Vol}}
\begin{document}

\title[Smooth Convergence away from Singular Sets]{Smooth Convergence away
from Singular Sets}

\author{Sajjad Lakzian}
\thanks{Lakzian was partially supported as a doctoral student
by NSF DMS \#1006059.}
\address{CUNY Graduate Center}
\email{lakzians@gmail.com}

\author{Christina Sormani}
\thanks{Sormani was partially supported by NSF DMS \#1006059.}
\address{CUNY Graduate Center and Lehman College}
\email{sormanic@gmail.com}

\date{}

\keywords{}

\begin{abstract}
We consider sequences of metrics, $g_j$, on a compact Riemannian
manifold, $M$, which converge smoothly on compact sets away from
a singular set $S\subset M$, to a metric, $g_\infty$, on $M\setminus S$.
We prove theorems which describe when $M_j=(M, g_j)$ converge
in the Gromov-Hausdorff sense to the metric completion,
$(M_\infty,d_\infty)$, of
$(M\setminus S, g_\infty)$.   To obtain these theorems, we study the
intrinsic flat limits of the sequences.   A new method, we call
hemispherical embedding, is applied to obtain explicit estimates on the
Gromov-Hausdorff and Intrinsic Flat distances between Riemannian
manifolds with diffeomorphic subdomains. 

\textcolor{blue}{Seven years after the publication of this paper in CAG, Brian Allen  
discovered a counter example to the published statement of Theorem 1.3.  Note that
Theorem 4.6 (which is the key theorem cited in other papers) remains correct.  We have
added an hypothesis to correct the statement of Theorem 1.3 and its consequences. This 
v4 includes corrections in blue, an erratum at the end of the introduction, and Brian Allen's example
in an appendix. An erratum is also being sent to the journal.}

\end{abstract}

\maketitle

\section{Introduction}

The purpose of this paper is to provide general results concerning
the limits of Riemannian manifolds which converge smoothly away from
a singular set as follows:

\begin{defn} \label{defn-smoothly-1}
We will say that a sequence of Riemannian metrics, $g_j$, on a
compact manifold, $M$, converges smoothly away from $S \subset M$
to a Riemannian metric $g_\infty$ on $M \setminus S$ if
for every compact set $K\subset M\setminus S$, $g_j$
converge $C^{k,\alpha}$ smoothly to $g_\infty$ as tensors.  \textcolor{blue}{In addition
we say that it converges uniformly from below if there exists $\delta_j\to 0$ such that
$g_j \ge (1-\delta_j)^2 g_\infty$ on $M\setminus S$.}   
\end{defn}

The techniques developed in this paper will also be applied to
other notions of smooth convergence away from singular sets in
upcoming work of the first author, particular notions in which the sequence
of manifolds need not be diffeomorphic.
With any notion of smooth convergence away from
a singular set, one must keep in mind that
even when the singular set is an isolated point, smooth convergence
away from that point does not even imply that $(M_\infty,g_\infty)$ is
compact [Example~\ref{ex-not-bounded}].  Increasingly large distances
may exist outside the compact sets used to define the smooth convergence.

Given two compact Riemannian manifolds, $M_i$, the Gromov-Hausdorff
distance, $d_{GH}(M_1, M_2)$, is an isometry invariant.  Introduced
by Gromov in \cite{Gromov-metric}, it is a distance on compact metric
spaces in the sense that $d_{GH}(M_1,M_2)=0$ iff $M_1$ is
isometric to $M_2$.   
When studying precompact domains within manifolds,
one always takes the metric completion before examining the region
using the Gromov-Hausdorff distance.
  Section~\ref{Sect-review} (see Definition~\ref{defn-GH}).   

Smooth limits away from singular sets, depend on the charts and
tensors $g_j$ used to define the smooth limit (c.f. Example~\ref{ex-to-hemisphere}).
Thus it is important to understand when the metric completion,
$\bar{Y}$, of a smooth limit, $Y=(M\setminus S, g_\infty)$, is
in fact actually the Gromov-Hausdorff limit, $(M_0,d_0)$, 
of the original sequence of manifolds, $(M_j, d_j)$, where $d_j$
is the Riemannian distance defined by the Riemannian metric $g_j$..
Observe that these spaces need not be isometric (c.f. Example~\ref{ex-region})
and that the original sequence of manifolds might not even have a
Gromov-Hausdorff limit (c.f. Example~\ref{ex-no-GH}).   If $M\setminus S$ is
not connected there isn't even a notion of the metric completion
as a single metric space (c.f. Example~\ref{ex-not-connected}).

Theorems relating Gromov-Hausdorff
limits and smooth limits away from singular sets appear in
work of Anderson,  Bando-Kasue-Nakajima,
Eyssidieux-Guedj-Zeriahi, Huang, Ruan-Zhong, Sesum, Tian and Tosatti
particularly in the setting of
Kahler Einstein manifolds
\cite{Anderson-KE} \cite{BKN}
\cite{EGZ} \cite{Huang-convergence}
\cite{Ruan-Zhang} \cite{Sesum-convergence}
\cite{Tian-surfaces} \cite{Tosatti-KE}.
However, even in this setting, the
relationship is not completely clear and the limits need not
agree \cite{Bando-bubbling}.

In this paper, our primary goal is to examine when the metric
completion, $(M_\infty, d_\infty)$, of the smooth limit,
$(M \setminus S, g_\infty)$, is isometric to the Gromov-Hausdorff
limit, $(M_0,d_0)$, of the original sequence of Riemannian manifolds
$(M,g_j)$.  We prove a number of theorems and present a
number of examples considering manifolds with and without Ricci curvature 
bounds.  Perhaps the most important result is the following:

\begin{thm}\label{Ricci-codim-thm}
Let $M_i=(M,g_i)$ be a sequence of oriented compact Riemannian manifolds
with uniform lower Ricci curvature bounds, 
\be
\Ricci_{g_i}(V,V)\ge (n-1)H \, g_i(V,V) \qquad \forall V \in TM_i
\ee
which converges smoothly away from $S$ \textcolor{blue}{uniformly from below}
where $S$ is a submanifold of codimension $2$.

If there is a connected precompact exhaustion, $W_j$, of
$M\setminus S$,
\be \label{defn-precompact-exhaustion}
\bar{W}_j \subset W_{j+1} \textrm{ with } 
\bigcup_{j=1}^\infty W_j=M\setminus S
\ee
satisfying 
\be \label{diam-2}
\diam(M_i) \le D_0,
\ee
\be \label{area-2}
\vol_{g_i}(\partial W_j) \le A_0,
\ee
and
\be \label{not-vol-2}
\vol_{g_i}(M\setminus W_j) \le V_j \textrm{ where } \lim_{j\to\infty}V_j=0,
\ee
then
\be
\lim_{j\to \infty} d_{GH}(M_j, N)=0,
\ee
where $N$ is the
metric completion of $(M\setminus S, g_\infty)$. 
\end{thm}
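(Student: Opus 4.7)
By Gromov's precompactness theorem, the uniform diameter bound \eqref{diam-2} together with the uniform lower Ricci bound implies that the sequence $(M_i,d_i)$ is sequentially precompact in the Gromov--Hausdorff topology. It therefore suffices to identify every subsequential GH-limit with $N$. I would pass to such a subsequence, call its limit $(M_0,d_0)$, and aim to build an explicit $\vare$-approximation between $(M_i,d_i)$ and $N$ for all large $i$ by working through the exhausting domains $W_j$.

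The natural device for doing this is the hemispherical embedding set up earlier in the paper: for each fixed $j$, the identity map realizes $(\overline{W}_j, g_i)$ and $(\overline{W}_j, g_\infty)$ as diffeomorphic Riemannian subdomains, and on such a fixed compact piece the smooth convergence $g_i\to g_\infty$ in $C^{k,\alpha}$ gives uniform two-sided control of $g_i/g_\infty$. This yields, via hemispherical embedding, an explicit metric realization in which the intrinsic length metrics $d_i^{W_j}$ and $d_\infty^{W_j}$ on $W_j$ have GH-distance tending to $0$ as $i\to\infty$ (for each fixed $j$). In particular, the closure of $W_j$ inside $N$ is uniformly close to $(W_j,d_i^{W_j})$.

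The main obstacle, and the step where the codimension-two hypothesis, the Ricci bound and the boundary area bound \eqref{area-2} are all used, is replacing the intrinsic-in-$W_j$ distance by the ambient distance. Concretely, I would try to show
\be
\sup_{p,q\in W_j} \bigl|\,d_i(p,q)-d_i^{W_j}(p,q)\,\bigr| \;\le\; \vare(j) \longrightarrow 0 \quad \textrm{as } j\to\infty,
\ee
uniformly in $i$, and the analogous statement in $N$. Any discrepancy must come from a $g_i$-geodesic taking a shortcut through the thin region $M\setminus W_j$. Because $S$ has codimension two, any such curve can be pushed off a tubular neighborhood of $S$ with arbitrarily small length increase; to quantify this uniformly in $i$ one combines the volume bound $\vol_{g_i}(M\setminus W_j)\le V_j\to 0$, the area bound $\vol_{g_i}(\partial W_j)\le A_0$, and Bishop--Gromov (from the lower Ricci bound) in a co-area slicing argument to produce a level surface inside $M\setminus W_j$ of controlled area and small transverse thickness, onto which any offending shortcut can be projected at the cost of a length increment $\vare(V_j,A_0,H,D_0)$. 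The codimension-two hypothesis is what prevents this increment from being bounded below, as it could be if $S$ were a hypersurface.

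With these two pieces in hand the conclusion is standard: given $\vare>0$, choose $j$ so large that the distance-distortion in the slicing step is $<\vare$, then choose $i$ so large that the hemispherical embedding of Step~2 gives an $\vare$-correspondence between $(W_j,d_i^{W_j})$ and $\overline{W}_j\subset N$. Since the complements $M\setminus W_j$ are $\vare$-dense in $M_i$ (by the same shortcut estimate, or more simply by the uniform diameter bound applied to the shrinking region), these $\vare$-correspondences extend to $\vare$-correspondences between $M_i$ and $N$, giving $d_{GH}(M_i,N)\to 0$. The crux of the whole argument is the quantitative codimension-two shortcut-removal in the third paragraph; the rest is organisation.
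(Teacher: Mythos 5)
You have correctly identified all three pieces that any proof of this theorem must supply: (i) smooth convergence on a fixed compact piece $W_j$ gives two-sided control of $g_i/g_\infty$ and hence closeness of the metrics on $W_j$ (this is exactly what the hemispherical embedding of Theorem~\ref{thm-subdiffeo} delivers); (ii) the codimension-two hypothesis lets one push geodesics off $S$ and relate the intrinsic-in-$W_j$ metric to the ambient metric (this is the content of Lemma~\ref{codim-2-lambda} and the notion of being uniformly well embedded); and (iii) some density statement is needed for the exhaustion, and this is where the Ricci bound must do its work. Your organisation of these into a direct construction of $\vare$-correspondences is a genuinely different route than the paper's. However, piece (iii) is where your argument has a real gap, and I do not think it can be filled the way you suggest.

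\textbf{The gap.} In the final paragraph you claim that $W_j$ is eventually $\vare$-dense in $(M,d_i)$ (you wrote $M\setminus W_j$, but presumably you mean that $M\setminus W_j$ lies within $\vare$ of $W_j$). Neither justification you give works. The bound $\diam(M_i)\le D_0$ says nothing about $\diam_{M_i}(M\setminus W_j)$ or the Hausdorff distance from $M\setminus W_j$ to $W_j$; the spline examples (Examples~\ref{ex-not-GH}, \ref{ex-no-GH}, \ref{ex-not-bounded}) have $\vol_{g_i}(M\setminus W_j)\le V_j\to 0$, $\vol_{g_i}(\partial W_j)$ bounded, $S=\{p_0\}$ of codimension~$3$, and yet $M\setminus W_j$ contains tubes that stay a definite distance from $W_j$, so the GH limit fails to be $N$. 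These examples violate only the Ricci hypothesis, so that hypothesis is doing all the work here, and the co-area slicing of your third paragraph does not use it in an essential way: controlled $\vol(M\setminus W_j)$ and controlled $\vol(\partial W_j)$ do not prevent $M\setminus W_j$ from having large diameter, and projecting an ``offending'' shortcut onto a low-area level surface does not bound the length increment if the shortcut runs far down a thin tube. What is really needed is a volume non-collapsing argument: every point of a subsequential GH limit $(Y,d)$ carries a definite lower bound on the measure of small balls, by Bishop--Gromov and the smooth convergence which pins down $\vol(M_i)$ from below; since $\vol_{g_i}(M\setminus W_j)\to 0$, this density can only be supplied by $W_j$, and that forces every $y\in Y$ to lie in the settled/metric completion of $(M\setminus S,g_\infty)$. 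This is exactly the paper's Proposition~\ref{prop-improve-sw}, which invokes Cheeger--Colding volume convergence. There is no co-area shortcut to that statement.

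\textbf{Comparison with the paper's route.} The paper does not build $\vare$-correspondences for the GH distance directly. Instead it first proves intrinsic flat convergence of $M_i$ to the settled completion (Theorem~\ref{flat-to-settled}, using only the volume and area controls together with uniform well-embeddedness, which in turn is supplied by the codimension-two hypothesis via Lemma~\ref{codim-2-lambda}), and only afterwards upgrades to GH agreement via non-collapsing. This decoupling is the point: the flat distance is insensitive to thin low-volume tubes, so the bulk convergence goes through with no Ricci bound; the tubes are then killed in one stroke by Cheeger--Colding, which is exactly the tool that your approach would also have to invoke once the slicing is seen to be insufficient. In short, your steps (i) and (ii) are sound and parallel the paper's Theorem~\ref{thm-subdiffeo} and Lemma~\ref{codim-2-lambda}, but step (iii) needs to be replaced by a genuine non-collapsing argument; once you add it, you have essentially reproduced Proposition~\ref{prop-improve-sw}.
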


Note that, unlike prior existing results concerning the Gromov-Hausdorff
limits of manifolds, here we require only area and volume
controls on the connected precompact exhaustion.  
Theorem~\ref{Ricci-codim-thm} is a consequence
of Theorem~\ref{Ricci-smooth-to-GH}, stated within, 
which assumes only that the connected precompact exhaustion
is uniformly well embedded in the sense of Definition~\ref{well-embedded} \footnote{\textcolor{blue}{
Definition~\ref{well-embedded} of uniform well embeddedness has been revised fixing the order of the limits.}}.
The necessity of the various
hypothesis of these theorems is described in 
Remark~\ref{Ricci-necessity}.  In particular the diameter hypothesis is unnecessary when the Ricci curvature is nonnegative.

The Ricci curvature condition in these theorems may be replaced by
a requirement that the sequence of manifolds have a uniform linear contractibility function.  See Definition~\ref{defn-contractibility-function}, Theorem~\ref{c-smooth-to-GH} and Theorem~\ref{c-codim-thm}, stated within.
The necessity of the various
hypothesis of these theorems is described in 
Remark~\ref{c-necessity}.

Observe our main theorems concern
sequences of manifolds converging smoothly away from a singular
set satisfying (\ref{area-2}) and (\ref{not-vol-2}).   In order to control the
limits of such manifolds using only conditions on volumes, 
we apply techniques developed by the 
second author with Stefan Wenger in \cite{SorWen1} and
\cite{SorWen2}.
In attempt to keep this article self contained, we review convergence
of Riemannian manifolds in Section~\ref{Sect-review}.   We provide
extensive examples in Section~\ref{Sect-Examples}.   
All examples are proven in
detail with short statements for easy reference.

Our theorems are proven by studying 
the {\em intrinsic flat} limit of the manifolds
[Definition~\ref{dsw}].
This intrinsic flat distance,
$d_{\mathcal{F}}(M_1, M_2)$ was
originally defined in work of the
second author with Wenger \cite{SorWen2}.
It is estimated by explicitly constructing a filling manifold, $B^{m+1}$,
between the two given manifolds, finding the excess boundary
manifold $A^m$ satisfying (\ref{Stokes}) and summing their volumes
as in (\ref{est-int-flat}).   See Remark~\ref{rmrk-Z-c} for a straight forward
construction.  
Since $d_\mathcal{F}$ depends only on the Riemannian manifolds,
$M_i$, as oriented metric spaces
with a notion of integration over $m$ forms, we take settled completions
rather than metric completions of open domains when analyzing the
intrinsic flat distance (see Definition~\ref{defn-positive-density}).  
If two completely settled oriented Riemannian manifolds, $M_1$ and $M_2$
have $d_{\mathcal{F}}(M_1, M_2)=0$ then there is an orientation
preserving isometry between them \cite{SorWen2}.   
See Section~\ref{Sect-review} for a review of the intrinsic flat
distance and related concepts. 

In Section~\ref{Sect-estimates} we prove
new explicit estimates on the Gromov-Hausdorff,  intrinsic flat and
scalable intrinsic flat distances between pairs of manifolds which
are diffeomorphic on subdomains [Theorem~\ref{thm-subdiffeo}].\footnote{\textcolor{blue}{Theorem~\ref{thm-subdiffeo} is correct as originally stated and proven. }}
The subdomains need not be connected.   These estimates are found
by isometrically embedding the regions into a 
common metric space defined using a hemispherical construction [Proposition~\ref{prop-hem}]
and then measuring the Hausdorff, flat and scalable flat distances
between their images respectively
[Lemma~\ref{lem-squeeze-Z}].   Note that
the Hausdorff distance measures distances between the images
using tubular neighborhoods while the flat distance measures a 
filling volume between the images.  These estimates have been
applied in work of the second author with Dan Lee on questions
concerning the Riemannian Penrose Inequality \cite{LeeSormani2}
and in the first author's doctoral dissertation \cite{Lakzian-thesis}.

In Section~\ref{sect-IF}, we prove theorems concerning the
intrinsic flat limits of manifolds which converge smoothly away
from singular sets.   In particular, we prove:

\begin{thm}\label{codim-thm}
Let $M_i=(M,g_i)$ be a sequence of compact oriented Riemannian manifolds
such that
there is a submanifold, $S$, of codimension 2,
and connected precompact exhaustion,
$W_j$, of $M\setminus S$ satisfying (\ref{defn-precompact-exhaustion})
with $g_i$ converge smoothly to $g_\infty$ on $M\setminus S$
\textcolor{blue}{uniformly from below} such that
\be\label{m-diam}
\diam_{M_i}(W_j) \le D_0 \qquad \forall i\ge j, 
\ee
\be \label{m-area}
\vol_{g_i}(\partial W_j) \le A_0,
\ee
and
\be \label{m-edge-volume}
\vol_{g_i}(M\setminus W_j) \le V_j \textrm{ where } \lim_{j\to\infty}V_j=0.
\ee
Then
\be
\lim_{j\to \infty} d_{\mathcal{F}}(M_j', N')=0.
\ee
where  $N'$ is the settled completion
of $(M\setminus S, g_\infty)$. 
 \footnote{\textcolor{blue}{The appendix has Brian Allen's counter example
to the original statement of Theorem 1.3 without the uniform convergence from below.}}
\end{thm}

This theorem is a consequence of 
Theorem~\ref{flat-to-settled} which assumes only that 
the connected precompact exhaustion is uniformly well embedded  in the 
sense of Definition~\ref{well-embedded}.\footnote{\textcolor{blue}{As mentioned above, Definition~\ref{well-embedded}
of uniform well embeddedness has been corrected within.}} 
 We discuss
the necessities of the conditions for these theorems
in Remark~\ref{flat-necessity}.
A key step in the proof is
a technical proposition concerning the convergence of exhaustions
of manifolds [Proposition~\ref{flat-exhaustion}].  \footnote{ \textcolor{blue}{At the end of
Section 5 there was a Lemma~\ref{codim-2-lambda} concerning uniform well embeddedness which has
a new proof using the uniform convergence from below.}}

In Section~\ref{Sect-flat-to-GH}, we apply the theorems regarding
intrinsic flat limits to prove the theorems concerning Gromov-Hausdorff
limits mentioned earlier.    Note that the second author and
Stefan Wenger have proven that
the intrinsic flat and Gromov-Hausdorff limits
of sequences of manifolds 
agree when the sequence has
nonnegative Ricci curvature and the volume is bounded below uniformly
\cite{SorWen1}.    These results are reviewed in Section~\ref{Sect-flat-to-GH}.   Theorem~\ref{codim-thm} then immediately
implies Theorem~\ref{c-codim-thm} and Theorem~\ref{Ricci-codim-thm}
when $H=0$.\footnote{\textcolor{blue}{These theorems now require uniform convergence from below.} } To obtain Theorem~\ref{Ricci-codim-thm} for
arbitrary values of $H$,  we prove Proposition~\ref{prop-improve-sw}.

Applications of these results appear in 
joint work of the second author and Dan Lee concerning asymptotically
flat rotationally symmetric Riemannian manifolds with positive
scalar curvature that satisfy an almost equality in the Penrose inequality
\cite{LeeSormani2}.   We believe these results may also be
applicable to open questions stated in \cite{LeeSormani1}.  
The first author is examining further applications in his doctoral dissertation.

The authors would like to thank the Simons Center for Geometry and Physics
for its hospitality.  Attending the many interesting talks there made it clear that
a paper clarifying the applications of the intrinsic flat convergence to
understand smooth limits away from singular sets would be useful to
mathematicians in a wide variety of subfields of geometric analysis.  We would
also like to thank Xiaochun Rong for suggesting an important counter example
and the referee for providing thorough and detailed suggestions that 
improved this paper throughout. 

\vspace{.2cm}

\noindent\textcolor{blue}{{\bf Errata:}}

\textcolor{blue}{Seven years after this paper was published in CAG, Brian Allen found a counter
example to Theorem 1.3 which we present in the appendix.  
Theorem 1.3 is false as stated in the original publication for smooth convergence $g_j\to g_\infty$
on $M\setminus S$ where the converge is only uniform on compact sets $K \subset M\setminus S$.  
This theorem and its consequences
Theorem 1.2 and Theorem~\ref{c-codim-thm} are easily corrected by adding in the 
assumption that the convergence of $g_j\to g_\infty$ is also uniform from below on $M\setminus S$.}

\textcolor{blue}{ The original error was traced by Brian Allen to a reversal of indices in limits in the original proof of
Theorem 5.2.    We find that by correcting the order of the
limits in Definition 5.1 of uniform well embeddedness, we can prove Theorem 5.2
as originally stated.   
We also correct the proof Lemma 5.7 to adapt to this new definition of
uniform well embeddedness using the notion of smooth convergence away from a singular set 
uniformly from below.  Thus Theorem 1.3 and its consequences (Theorem 1.2 and Theorem 6.6)
are true assuming this stronger hypothesis.  These corrections are made in blue within this v4 of the paper.}

\textcolor{blue}{ This paper has been cited many times since its publication.  We believe the only paper that 
needs revision is \cite{Lakzian-Diameter} by the first author of this paper.   The other papers apply only
Theorem~\ref{thm-subdiffeo}, which remains correct as originally stated and proven.}

\section{Background} \label{Sect-review}

All notions of distances between Riemannian manifolds studied
in this paper are built upon Gromov's idea that one may 
view Riemannian manifolds as metric spaces and isometrically
embed them into a common metric space.
In this paper, a key part of our work relies on constructing such
isometric embeddings.  We review Gromov's key
ideas in Subsection~\ref{subsect-iso}.

To estimate the Gromov-Hausdorff distance between a pair of
Riemannian manifolds, one needs only find a pair of isometric
embeddings $\varphi_i: M^m_i \to Z$ into a common complete
metric space $Z$ and then measure the Hausdorff distances
between the images.   
We review the definition of the Hausdorff and
Gromov-Hausdorff in
Subsection~\ref{subSect-flat-to-GH}.  

To estimate the intrinsic flat distance 
one must measure the  flat distance between these images.  
So one may construct a Riemannian manifold of one dimension
higher filling in the space between the two images, possible with
some excess boundary.   
Note that one can only measure 
the intrinsic flat distance between oriented manifolds with finite volume
of the same dimension.  
See Subsection~\ref{subsect-int-flat}.  

The scalable intrinsic flat distance is also defined using
filling manifolds and excess boundaries. 
It is reviewed in Subsection~\ref{subsect-scalable}.

Remarks~\ref{rmrk-Z-a}, ~\ref{rmrk-Z-b}, ~\ref{rmrk-Z-c}
and~\ref{rmrk-Z-s} capture the key properties of these three
notions of distance needed to estimate them for the purposes of
this paper.

\subsection{Metric Spaces and Isometric Embeddings}\label{subsect-isom-emb} \label{subsect-iso}

\begin{defn}\label{defn-d}
Recall that one may view a Riemannian
manifold $(M,g)$ as a metric space $(M,d)$ by defining the distances between
points as follows:
\be \label{g-to-d}
d(x_1, x_2)= \inf\left\{L_g(\gamma): \, \gamma(0)=x_1, \, \gamma(1)=x_2\right\}
\ee
where
\be \label{g-to-d-2}
L_g(\gamma)= \int_0^1 g(\gamma'(t), \gamma'(t))^{1/2} dt
\ee
Given a connected
subdomain, $W \subset M$, and $x,y\in W$, the "restricted metric",
$d_M(x,y)$, will
denote the distance between $x$ and $y$ measured as in 
(\ref{g-to-d}) where the infimum taken over 
all curves $\gamma: [0,1]\to M$, while the "induced length
metric", $d_W(x,y)\ge d_M(x,y)$, has the infimum taken only over curves
$\gamma: [0,1]\to W$.    We denote the
restricted and intrinsic length diameters of $U\subset W\subset M$
as follows
\begin{eqnarray}
\diam_M(U)&=&\sup\{d_M(x,y): \, x,y\in U\}\\
\diam_W(U)&=&\sup\{d_W(x,y): \, x,y\in U\}
\end{eqnarray}
\end{defn}

More generally a length metric space is a metric space whose
distances are defined as an infimum of lengths of rectifiable
curves.  Compact length metric spaces always have
minimizing geodesics between points achieving the distance.

In this paper we will often define metric spaces, $Z$, by gluing together
Riemannian manifolds with corners along their boundaries.  In this
way we may still apply (\ref{g-to-d}) to define the distances between
points.   Again, for connected subdomains, $W\subset Z$, one has 
both an induced length metric, $d_W$, and a restricted distance
$d_Z\le d_W$ just as in Definition~\ref{defn-d}.

\begin{defn} \label{defn-isom-embed}
An isometric embedding $\varphi: X \to Z$ is a distance preserving map
\be
d_Z(\varphi(x_1), \varphi(x_2)) = d_X(x_1, x_2) \qquad \forall x_1, x_2 \in X
\ee
\end{defn}

One should be aware that a Riemannian isometric embedding defined by
the fact that $d\varphi$ is an isometry on the tangent spaces at each point,
is not necessarily an isometric embedding.  For example, the natural embedding
of the sphere into Euclidean space is not an isometric embedding with the
standard metric on the sphere.   See Figure~\ref{fig-defn-isom-embed}.

\begin{figure}[h] 
   \centering
   \includegraphics[width=5in]{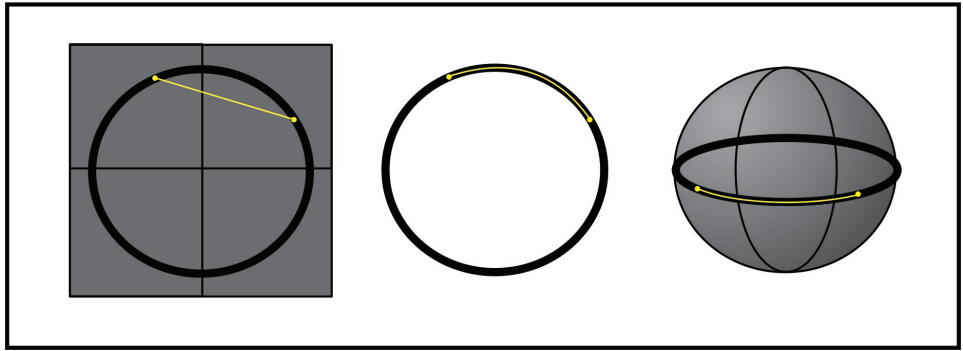}
   \caption{$S^1$ in the center isometrically embeds into $S^2$ on the right, but does not
isometrically embed into $\E^2$ on the left.  }
   \label{fig-defn-isom-embed}
\end{figure}

\begin{rmrk}\label{rmrk-Z-a}
Suppose two manifolds, $M_i$ have
diffeomorphic subdomains, $U_i$, then a filling manifold
can be constructed of the form $U\times [h_1,h_2]$ with a well
chosen metric $g'$ so that $M_i$ isometrically embed into
\be
Z=M_1 \disjointunion (U\times [h_1,h_2]) \disjointunion M_2.
\ee
Here $Z$ is glued together so that $U_i$ is identified point to point with
$U\times \{h_i\}$.  A precise way of
choosing such a $g'$ will be given in Theorem~\ref{thm-subdiffeo}.
See Figure~\ref{fig-defn-GH}.
\end{rmrk}

\subsection{The Gromov-Hausdorff Distance}\label{subSect-flat-to-GH}

The Gromov-Hausdorff distance between a pair of Riemannian
manifolds is estimated by taking isometric
embeddings into a common metric space $Z$
and measuring the Hausdorff distance between them.  This
distance was introduced by Gromov in \cite{Gromov-metric}.
It is defined on pairs of metric spaces.

\begin{defn} [Hausdorff] \label{defn-H}
Given two subsets $Y_1, Y_2 \subset Z$, the Hausdorff distance is
defined
\be
d_H^Z(Y_1, Y_2) = \inf\left\{r: \, Y_1 \subset T_r(Y_2) \textrm{ and } Y_2 \subset T_r(Y_1)\right\}
\ee
where $T_r(Y)=\left\{z\in Z: \, \exists y\in Y \, s.t. \, d(y,z)<r\right\}$.
\end{defn}

One may immediately observe that the topology and dimension of
subsets which are close in the Hausdorff sense can be quite different.

\begin{defn}[Gromov] \label{defn-GH}
Given a pair of metric spaces $(X_1, d_1)$ and $(X_2, d_2)$,
the Gromov-Hausdorff distance between them is
\be
d_{GH}(X_1, X_2)= \inf\left\{ d^Z_H(\varphi_1(X_1), \varphi_2(X_2)): \,\,\varphi_i: X_i \to Z\right\}
\ee
where the infimum is taken over all common metric spaces, $Z$,
and all isometric embeddings, $\varphi_i: X_i \to Z$.
\end{defn}

\begin{figure}[h] 
    \centering
    \includegraphics[width=5in]{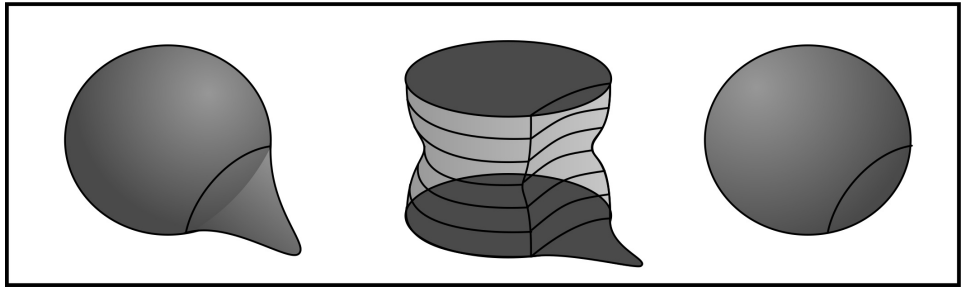}
    \caption{$M_1$ and $M_2$ depicted on the left and the right
    isometrically embed into $Z$ in the center.
    See Remarks~\ref{rmrk-Z-a} and~\ref{rmrk-Z-b}.}
        \label{fig-defn-GH}
  \end{figure}

\begin{rmrk}\label{rmrk-Z-b}
In Figure~\ref{fig-defn-GH} depicting Remark~\ref{rmrk-Z-a},
 we see that
\be
d_{GH}(M_1, M_2) \le d_H^Z(\varphi_1(M_1), \varphi_2(M_2))
\ee
which is roughly the length of a curve from the tip of the bump in $M_1$
running back within $\varphi(M_1)\subset Z$ to the warped region
$U\times [h_1,h_2]$ and then straight up to $M_2$.  Later in this paper
Theorem~\ref{thm-subdiffeo} we will find a precise description of
the metric on the metric space $Z$ of Figure~\ref{fig-defn-GH}.
\end{rmrk}

Gromov proved in \cite{Gromov-metric}
that this a distance between compact
metric spaces, in the sense
that $d_{GH}(X_1, X_2)=0$ iff $X_1$ and $X_2$ are isometric.
In general one takes the metric completion, $\bar{X}$, of a
precompact space, $X$, before
discussing it's Gromov-Hausdorff distance and we will do the same here.
Recall that the metric completion is defined as follows:

\begin{defn}\label{metric-completion}
Given a precompact metric space, $X$, the metric completion, $\bar{X}$
of $X$ is the space of Cauchy sequences, $\{x_j\}$, in $X$ with the
metric
\be
d(\{x_j\},\{y_j\})=\lim_{j\to\infty} d_X(x_j,y_j)
\ee
and where two Cauchy sequences are identified if the distance between
them is $0$.  There is an isometric embedding, $\varphi: X \to \bar{X}$,
defined by $\varphi(x)=\{x\}$ where $\{x\}$ is a constant sequence.
Lipschitz functions, $F: X\to Y$, extend to $F:\bar{X} \to Y$ via
$F(\{x_j\})=\lim_{j\to\infty} F(x_j)$ as long as $Y$ is complete.
\end{defn}

Gromov's compactness theorem states that a sequence of Riemannian manifolds $M_j^m$
with a uniform lower bound on Ricci curvature have a subsequence which converges
in the Gromov-Hausdorff sense.   More generally one may replace the Ricci curvature
bound with a bound on the number, $N(r)$, of 
disjoint balls of radius, $r$, that can be placed in
a metric space, $X$.   That is, a sequence of metric spaces $X_j$ with a uniform
bound on $N(r)$ for all $r$ sufficiently small, has a subsequence which converges in the
Gromov-Hausdorff sense to a compact limit space $X$.   Conversely, if
$d_{GH}(X_j , X)\to 0$, then there is a uniform bound on $N(r)$.
\cite{Gromov-metric}.

\subsection{The Intrinsic Flat Distance} \label{subsect-int-flat}

To estimate the intrinsic flat distance between a pair
of oriented Riemannian manifolds one again needs only find a pair of
isometric embeddings, $\varphi_i: M^m_i \to Z$, into a common complete
metric space, $Z$.  When one finds a 
filling submanifold, $B^{m+1}\subset Z,$ and an
excess boundary submanifold, $A^m\subset Z$, such that
\be\label{Stokes}
\int_{\varphi_1(M_1)}\omega -\int_{\varphi_2(M_2)}\omega=\int_B d\omega +\int_A \omega,
\ee
then the intrinsic flat distance is bounded by
\be \label{est-int-flat}
d_{\mathcal{F}}(M^m_1, M^m_2) \le \vol_m(A^m)+\vol_{m+1}(B^{m+1}). 
\ee
Generally the filling manifold and excess boundary can have corners and
more than one connected component.  See Figure~\ref{fig-subdiffeo-1}.

\begin{rmrk}\label{rmrk-Z-c}
In Figure~\ref{fig-defn-GH} depicting Remark~\ref{rmrk-Z-a}, 
we have $M_i$ isometrically embedded
into a well chosen metric space
\be
Z=M_1 \disjointunion (U\times [h_1,h_2]) \disjointunion M_2.
\ee
Applying (\ref{Stokes}) to
\be
B=U\times [h_1,h_2]
\ee
we see that the excess boundary 
\be
A= (M_1\setminus U_1) \cup (\partial U \times [h_1,h_2])
\cup (M_2 \setminus U_2).
\ee
Then
\begin{eqnarray*} 
d_{\mathcal{F}}(M_1, M_2) &\le& 
 \vol_m(M_1\setminus U_1) + \vol_m(M_2\setminus U_2)
+ \vol_m(\partial U \times [h_1,h_2])\\
&+&\vol_{m+1}(U \times [h_1,h_2],g').
\end{eqnarray*}
An explicit construction of the metric $g'$ on $U\times[h_1,h_2]$
in Theorem~\ref{thm-subdiffeo}, allows one to precisely
estimate the volume of $U\times[h_1,h_2]$
and $\partial U\times[h_1,h_2]$.
\end{rmrk}

To understand limits of sequences of Riemannian manifolds, 
the intrinsic flat distance was defined on a larger class of metric spaces
called integral current spaces in
\cite{SorWen2}.   An integral current space, $(X,d,T)$, is
a metric space, $X$, with a metric, $d$, and an integral current
structure, $T$, such that $\set(T)=X$.  
An oriented Riemannian manifold, $(M,g)$, of
finite volume, has a metric, $d_M$, defined as in Definition~\ref{defn-d}
and an integral current structure, $T$, acting on $m$ dimensional
differential forms, $\omega$ as
\be
T(\omega)= \int_M \omega.
\ee
More generally, the integral current structure, $T$, of an integral current
space, $(X,d,T)$, is an $m$ dimensional integral current
$T\in \intcurr_m(\bar{X})$ defined as in Ambrosio-Kirchheim's work \cite{AK}.   
The integral current structure $T$ provides both an orientation and
a measure called the mass measure denoted $||T||$ and 
$\set(T)$ is the set of positive lower density for this measure.  
On a oriented
Riemannian manifold, the mass measure is just the Lebesgue measure.
More generally the mass measure can have integer valued weights.

If $(M^m, g)$ is a Riemannian manifold with singularities on a subset $S$
such that the Hausdorff
measure, $\mathcal H_m(S)=0$, then one obtains a corresponding integral
current space by taking the settled completion of $M\setminus S$ defined as follows:

\begin{defn} \cite{SorWen2} \label{defn-positive-density}
The settled completion, $X'$, of a metric space $X$ with a measure $\mu$
is the collection of points $p$ in the metric completion $\bar X$
which have positive  lower density:
\be\label{eq-positive-density}
\liminf_{r\to 0} \mu(B_p(r))/r^m >0.
\ee
The resulting space is then "completely settled". 
\end{defn}

\begin{figure}[h] 
   \centering
   \includegraphics[width=5in]{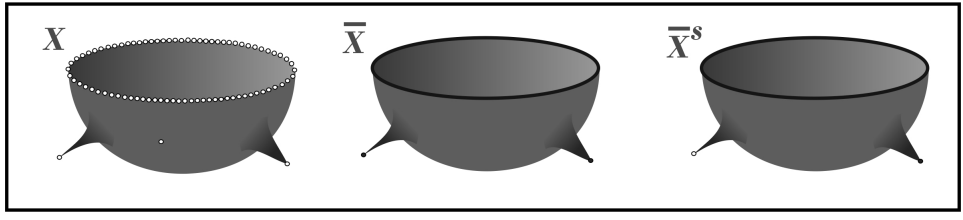}
   \caption{The completion, $\bar{X}$ includes the boundary and
fills in the three "holes" and the settled completion, $X'$,
removes the cusped
singularity but keeps the boundary, cone tip and smoothly filled hole. }
   \label{fig-settled-completion}
\end{figure}

If a manifold has only point singularities, one includes
all conical tips and does not include cusp tips in a manifold with point singularities.   See Figure~\ref{fig-settled-completion}.
This is natural because the essential property of an integral current space is
its integration and points of $0$ lower density do not contribute to that integration.  In fact integral current spaces are completely settled
with respect to the mass measure, $||T||$, as a consequence
of the requirement that $\set(T)=X$.

The mass of an integral current space, $\mass(T)$, 
is a weighted volume of sorts
which takes into account the integer valued Borel weight defining the current
structure on the space.  When the integral current space is an oriented
Riemannian manifold then its mass is just its volume, $\mass(M)=\vol(M)$.

The boundary of an integral current space is defined
\be
\partial (X,d,T)= (\set(\partial T), d, \partial T)
\ee
where $\partial T$ is the boundary of the integral current defined as in \cite{AK}
so that it satisfies Stoke's Theorem.
When $M$ is a Riemannian manifold, its boundary is just the usual boundary, $\partial M$.

The flat distance between two integral currents is defined as in
\cite{FF}
\be\label{FF}
d_{{F}}(T_1, T_2) = \inf \left\{ \mass(B^{m+1})+\mass(A^m): \, T_1-T_2=A+\partial B\right\}.
\ee
That is $T_1(\omega)-T_2(\omega)= A(\omega)+ \partial B(\omega)=A(\omega)+B(d\omega)$.

\begin{defn}[Sormani-Wenger]
The intrinsic flat distance between integral current spaces is defined in \cite{SorWen2}
as
\be \label{dsw}
d_{\mathcal{F}}\big((X_1, d_1, T_1), (X_2, d_2,T_2)\big)
= \inf\left\{ d^Z_F(\varphi_{1\#}T_1, \varphi_{2\#}T_2) ; \, \varphi_i: X_i \to Z\right\}
\ee
where the infimum is taken over all common complete metric spaces, $Z$,
and all isometric embeddings $\varphi_i: X_i \to Z$ and
where $\varphi_{\#}$ is the push forward map on integral
currents.
\end{defn}

If one constructs a specific $Z$ and isometric embeddings 
$\varphi_i: M_i\to Z$, then one needs only estimate the 
flat distance between the images to obtain an upper bound for
the infimum in (\ref{dsw}).  An explicit filling manifold $B$
satisfying (\ref{Stokes}), then provides an upper bound on the
infimum in (\ref{FF}).  This is how one
obtains the estimate in (\ref{est-int-flat}).  See also
Remark~\ref{rmrk-Z-c}.
 
In \cite{SorWen2} it is proven that this is a distance between precompact
integral current spaces in the sense that
$d_{\mathcal{F}}\big((X_1, d_1, T_1), (X_2, d_2, T_2)\big)=0$ iff there is
a current preserving isometry from $X_1$ to $X_2$.   When the integral
current spaces are oriented manifolds, then there is an orientation
preserving isometry.

Note that all integral current spaces are metric spaces but they need not
be length spaces.  As will be seen
in Example~\ref{ex-not-connected} a sequence of connected
Riemannian manifolds may converge in the intrinsic flat sense to an
integral current space which has broken apart due to the development of
a cusp singularity.  So the limit is not a length metric space.

In \cite{SorWen2} it is proven that if $(M, g_j)$ converge smoothly to
$(M, g_\infty)$ then they converge in the intrinsic flat sense.  In fact, precise
estimates on the intrinsic flat distance are given in terms of the Lipschitz
distance, the diameters and the volumes of the spaces.  The bounds
are found using geometric measure theory.   Here we provide a new
estimate relating the intrinsic flat and Lipschitz distances by
explicitly constructing a filling manifold between them [Lemma~\ref{lem-squeeze-Z}].

If a sequence of oriented Riemannian
manifolds with a uniform upper bound on their volumes
and volumes of their boundaries converges
in the Gromov-Hausdorff sense to a compact metric space $(Y,d)$,
then a subsequence converges
in the intrinsic flat sense to $(X,d,T)$ where $X \subset Y$ and
the metric $d$ is restricted from $Y$
\cite{SorWen2} [Thm 3.20].  In Example~\ref{ex-not-connected}
we see that this may be a proper subset.  In fact the Intrinsic
flat limit may be the $(0,0,0)$ integral current space if $(Y,d)$ has
a lower dimension than the manifolds in the sequence \cite{SorWen2}.

In \cite{SorWen1} two theorems were proven indicating when the
intrinsic flat limit and the Gromov-Hausdorff limits agree.  These theorems
will be reviewed later in the paper as they are applied.   We will also
apply the techniques in their proofs to prove Theorem~\ref{Ricci-codim-thm}.

\subsection{The Scalable Intrinsic Flat Distance}\label{subsect-scalable}

The scalable intrinsic flat distance was suggested as a notion in
work of the second author with Dan Lee \cite{LeeSormani1}
following a recommendation of Lars Andersson.   It is defined to
scale with distance when the Riemannian manifolds are rescaled.
In particular,
\be \label{est-s-int-flat}
d_{s\mathcal{F}}(M^m_1, M^m_2) \le 
\left(\vol_m(A^m)\right)^{1/m}+\left(\vol_{m+1}(B^{m+1})\right)^{1/(m+1)}
\ee
whenever there exist 
isometric embeddings, $\varphi_i: M^m_i \to Z$, into a common complete
metric space, $Z$, and one finds a 
filling submanifold, $B^{m+1}\subset Z,$ and an
excess boundary submanifold, $A^m\subset Z$, satisfying
(\ref{Stokes}).

\begin{rmrk}\label{rmrk-Z-s}
In the setting of Remark~\ref{rmrk-Z-c}
depicted in Figure~\ref{fig-defn-GH} we see that
\begin{eqnarray*}
d_{s\mathcal{F}}(M_1, M_2) &\le& \vol_{m+1}(U \times [0,h],g')^{1/(m+1)}\\
&+& 
\left(\vol_m(M_1\setminus U_1) + \vol_m(M_2\setminus U_2)
+\vol_{m}(\partial U \times [h_1,h_2],g')
\right)^{1/m}.
\end{eqnarray*}
\end{rmrk}

More precisely:

\begin{defn}
The scalable
 intrinsic flat distance between integral current spaces is defined
as
\be
d_{s\mathcal{F}}((X_1, d_1, T_1), (X_2, d_2,T_2))
= \inf\left\{ d^Z_{sF}(\varphi_{1\#}T_1, \varphi_{2\#}T_2) ; \, \varphi_i: X_i \to Z\right\}
\ee
where the infimum is taken over all common complete metric spaces, $Z$,
and all isometric embeddings $\varphi_i: X_i \to Z$ and
where $\varphi_{\#}$ is the push forward map on integral
currents and where the scalable flat distance between $m$
dimensional integral currents is defined by
\be
d_{sF}(T_1,T_2) = \inf \left\{\mass(B)^{1/(m+1)}+  \mass(A)^{1/m} : T_1-T_2=A +\partial B \right\}
\ee
\end{defn}


\section{Examples} \label{Sect-Examples}

\textcolor{blue}{The examples in the section are all fine.}

The following examples are presented to indicate how little control
one may have on limits of manifolds which converge smoothly
away from singular sets and to prove the necessity of the conditions
in our theorems.  The proofs of these examples will sometimes rely on
our theorems proven below but we include them up front so that they
can be kept in mind when reading the remainder of the paper.

\subsection{Losing a Region}

\begin{example}\label{ex-region}
There are metrics, $g_j$, on the sphere, $M^3$, such that
$(M^3, g_j)$ converge smoothly away from $S=\bar{B}_{p_0}(\pi/16)$,
such that the metric completion of the
smooth limit away from $S$ is $S^3\setminus B_{p_0}(\pi/16)$,
the standard round sphere
$(S^3,g_0)$ with a ball removed.   The smooth limit of $M^3$ without the
singular set removed is the entire round sphere and this agrees
with the intrinsic flat and Gromov-Hausdorff
limits (c.f. Lemma~\ref{lem-squeeze-Z}).
\end{example}

\begin{proof}
Taking the metrics, $g_j=g_0$, we have a constant
sequence of standard spheres.   So the intrinsic flat and Gromov-Hausdorff
limits are clearly the standard sphere.   Furthermore
$(g_j, M\setminus S)$ clearly converges to 
$(g_0, M\setminus \bar{B}_{p_0}(\pi/16))$ whose metric completion
is $(g_0, M\setminus {B}_{p_0}(\pi/16))$.
\end{proof}

\subsection{Cones and Cusps}

\begin{example}\label{ex-cone}  
There are metrics $g_j$ on the sphere $M^3$ such that
$(M^3, g_j)$ converge smoothly away from a point singularity
$S=\{p_0\}$ and the
metrics $g_j$ form a conical singularity at $p_0$.
The Gromov-Hausdorff and intrinsic flat limits
agree with the metric completion of $(M\setminus S, g_\infty)$
which is the sphere including the conical tip.
\end{example}

\begin{proof}
More precisely the metrics $g_j$ are defined by
\be
g_j= dr^2 + f_j^2(r) g_{S^2} \textrm{ for } r\in [0,\pi]
\ee
where $f_j(r) = \left( 1/j \right)\sin(r) + \left( 1 - 1/j \right)f(r)$ in which, $f(r)$ is a smooth function such that:
\be
    f(r) = \sin(r)  \; \text{for} \; r \in [0,\pi/2],
\ee
and,
\be
    f(r) = - \frac{2}{\pi} \left( r - \pi \right)  \; \text{for} \; r \in [3\pi/4 , \pi].
\ee
For any $\delta>0$, $f_j$ converge to $f$ smoothly on $[0, \pi-\delta]$.
Thus $g_j$ converge smoothly on compact subsets of $M\setminus S$
to
\be
g_\infty = dr^2 + f^2(r) g_{S^2}.
\ee
The metric completion of $(M\setminus S, g_\infty)$ 
then adds in a single point $p_0$
at $r=\pi$.   Since
\be
\liminf_{r\to 0} \mu(B_{p_0}(r))/r^3 = \frac{4}{3\pi^2} vol (S^2) = \frac{16}{3\pi}>0,
\ee
the point, $p_0$, 
is also included in the settled completion of $(M\setminus S, g_\infty)$.
To complete the proof of the claim we could apply
Theorem~\ref{codim-thm}.
\end{proof}

\begin{example}\label{ex-cusp}
There are metrics $g_j$ on the sphere $M^3$ such that
$(M^3, g_j)$ converge smoothly away from a point singularity
$S=\{p_0\}$ and the
metrics $g_j$ form a cusp singularity at $p_0$.
The Gromov-Hausdorff
agree with the metric completion of $(M\setminus S, g_\infty)$
 which is the
sphere including the cusped tip.
However the intrinsic flat limit
of $(M\setminus S, g_\infty)$ does not include the cusped tip
because it has $0$ density.  So the intrinsic flat
limit is the settled completion of $(M\setminus S, g_\infty)$
which in this case is $(M\setminus S, g_\infty)$
 \end{example}

\begin{proof}
More precisely the metrics $g_j$ are defined by
\be
g_j= dr^2 + f_j^2(r) g_{S^2} \textrm{ for } r\in [0,\pi]
\ee
where $f_j(r) = \left( 1/j \right)\sin(r) + \left( 1 - 1/j \right)f(r)$ in which, $f(r)$ is a smooth function such that:
\be
    f(r) = \sin(r)  \; \text{for} \; r \in [0,\pi/2],
\ee
and,
\be
    f(r) = \frac{4}{\pi^2} \left( r - \pi \right)^2  \; \text{for} \; r \in [3\pi/4 , \pi].
\ee
For any $\delta>0$, $f_j$ converge to $f$ smoothly on $[0, \pi-\delta]$.
Thus $g_j$ converge smoothly on compact subsets of $M\setminus S$
to
\be
g_\infty = dr^2 + f^2(r) g_{S^2}.
\ee
The metric completion of $(M\setminus S, g_\infty)$ 
then adds in a single point $p_0$
at $r=\pi$.   Since
\be
\liminf_{r\to 0} \mu(B_{p_0}(r))/r^3 = \liminf_{r\to 0} \frac{4}{5\pi^2}r^2 vol (S^2) = 0,
\ee
the point, $p_0$, is not included in the settled completion
of $(M\setminus S, g_\infty)$.   

This Gromov-Hausdorff and Intrinsic Flat limits in this example
were proven to be as claimed in the Appendix of \cite{SorWen2}.
One may also apply Theorem~\ref{thm-subdiffeo} to reprove this.
\end{proof}

\subsection{Not Connected}

\begin{example}\label{ex-not-connected}
There are smooth metrics $g_i$ on the sphere, $M^3$,
converging smoothly away from the equator, $S$, such that the
equator pinches to $0$.   Then $(M^3 \setminus S, g_i)$ has
two components, each converging to a standard sphere with
a point removed.  The metric completion of each of the two disjoint
metric spaces is a standard sphere.
However the Gromov-Hausdorff limit is a pair
of spheres joined at a point singularity.  So we see why connectedness
of $M^3 \setminus S$ is a necessary condition in Theorem~\ref{flat-to-settled}.
Here the singular set is of codimension 1.
\end{example}

\begin{rmrk}
In upcoming work of the first author \cite{Lakzian-thesis}, appropriate gluings of
disjoint metric spaces are taken to recover the Gromov-Hausdorff
limit when $M\setminus S$ is not connected.
\end{rmrk}

\begin{proof}
Let $\phi(x)$ be a smooth bump function on $\R$ with the following properties: 
\be
	\int_{ - \infty}^{\infty} \; \phi(x) \; dx = 1
\ee
\be
	\lim_{\epsilon \to 0} \phi_\epsilon(x) = \lim_{\epsilon \to 0} \epsilon^{-1}\phi(x/\epsilon) = \delta_0(x),
\ee
where $\delta_0(x)$ is the Dirac delta function at $0$. 
Let 
\be
\Phi_{1/i} \left( |\sin(2x)| \right) (r) = \phi_{1/i}(x) * | \sin(2x) | (r)= \int_{- \infty}^{\infty} \; \phi_{1/i}(r - x) \; |\sin(2x)| \; \mathrm{d}x.
\ee
It is standard that the sequence is smooth and converges to $| \sin(2r) |$ as $i \to \infty$.  Now, take a partition of unity $\{ \psi , 1 - \psi \}$ on $[0 , \pi]$ such that $\operatorname{supp}(\psi) \subset [\pi/8 , 7\pi/8]$ and $\psi = 1$ on $[\pi/4 , 3\pi/4]$. 
We take the sequence of metrics
\be
g_i= dr^2 + f_i^2(r) g_{S^2}
\ee
where,
\be
	f_i(r) =  \frac{1}{i} \sin(r) +  \frac{i-1}{2i}  \biggl( \left( 1 - \psi(r) \right)\left( |\sin(2r)| \right) + \psi(r) \; \Phi_{1/i} \left( |\sin(2x)| \right) (r) \biggr)
\ee
These are smooth metrics for $r\in [0,\pi]$ because $f_i(r)>0$ for $r\in (0,\pi)$,
\be
f_i'(0)= \frac{1}{i}  + \frac{2(i-1)}{2i} =1,
\ee
\be
    f_i'(\pi)= - \frac{1}{i}  - \frac{2(i-1)}{2i} = - 1
\ee
and
$
f_i''(0)=f_i''(\pi)=0.
$
As $i\to \infty$, $g_i$ converge smoothly away from $r^{-1}(\pi/2)$
to
\be
g_\infty=dr^2 + \frac{\sin^2(2r)}{4} g_{S^2}
\ee
which is a metric on a pair of spheres, each with a point removed.
The metric completion keeps the pair of spheres disjoint, endowing
each with its own point of completion.

The Gromov-Hausdorff and intrinsic flat limits however are a
connected pair of spheres
joined at point which creates a conical singularity.
This can be seen because the distances $d_i$ defined
on $M_i$ using $g_i$ are in fact converging in the Lipschitz
sense to $d_\infty$ defined by using the infimum of lengths, $L_\infty$,
of curves between points where
\be
L_\infty(C) =\int_0^1 g_\infty(C'(s),C'(s))^{1/2} \, ds.
\ee

Taking $W_j=r^{-1}[0, \pi/2 - 1/j] \cup r^{-1}[\pi/2+1/j, \pi]$, then
we have smooth convergence on $W_j$.  The uniform
embeddedness constants converge to $0$.   Both
$\vol_{g_i}(V\setminus W_j)< V_j$ with $V_j \to 0$ and 
$\vol_{g_i}(\partial W_j) \le A_j$ with $A_j \to 0$.   So we
only fail the connectedness hypothesis of this theorem.
\end{proof}

\subsection{Bubbling}

\begin{example}\label{ex-not-F}
There are smooth metric $g_i$ on
the sphere, $M^3$ converging smoothly away
from the singular set $S=\left\{p_0\right\}$ to a sphere.
Yet $(M^3, g_i)$ converge in the Gromov-Hausdorff
and intrinsic flat sense to a pair of spheres
meeting at $p_0$.  See Figure~\ref{fig-not-F}.
\end{example}

\begin{figure}[h] 
   \centering
   \includegraphics[width=5in]{Smooth-Away-not-F.jpg}
   \caption{Example~\ref{ex-not-F}.}
   \label{fig-not-F}
\end{figure}

\begin{proof}
Let
\be
g_i= h_i^2(r) dr^2 + f_i^2(r) g_{S^2}
\ee
where
\begin{eqnarray}
h_i(r)=1 &\textrm{ on } & r\in [0,a_i]\\
f_i(r)=\sin(r) &\textrm{ on } & r\in [0,a_i]
\end{eqnarray}
where $a_i= \pi-  \pi/(10i)$
so that $g_i$ converges smoothly away from $S$ to the
round metric $g_\infty$ on the sphere.  The metric completion
of $(M\setminus S, g_\infty)$ is the round sphere.

Now we set
\begin{eqnarray}
h_i(r)=10i &\textrm{ on } & r\in [b_i, \pi]\\
f_i(r)=\sin((\pi -r ) /(10i)) &\textrm{ on } & r\in [b_i,\pi]
\end{eqnarray}
where $b_i=\pi- ( \pi - \pi / (10i) ) / (10i)$ so there is symmetry
and we extend them smoothly for $r\in [a_i, b_i]$
so that
\be
h_i(a_i) =1 \le h_i(r)\le 10i=h_i(b_i)
\ee
and
\be
0 < f_i(r) < \max \{f_i(a_i), f_i(b_i)\}.
\ee
These thin regions are converging to
a single point.  So the Gromov-Hausdorff limit of $(M, g_i)$
is a pair of standard spheres joined at a point and the
intrinsic flat limit is the same.  The smooth limit away from
$S$ missed the second sphere!
\end{proof}

\subsection{Losing Volume in the Limit}

\begin{example} \label{ex-to-hemisphere}
There are $(M^3, g_i)$ all isometric to the standard sphere which
converge smoothly away from a singular set $S=\{p_0\}$ to
$(M\setminus S, g_\infty)$ which is isometric to an open hemisphere.
The metric completion agrees with the settled completion, $(M_\infty, d_\infty)$
which is isometric to a closed hemisphere.  The singular set is
codimension $2$ in $M$.   This example satisfied all the
conditions of all of our Theorems concerning smooth
convergence away from singular sets except
$\vol(M\setminus W_j)< V_j$ where $\lim_{j\to\infty} V_j=0$.
\end{example}

\begin{proof}
Again we view $M^3=S^3$ as a warped product with a warping function
$r\in [0, \pi]$, such that $r(p_0)=\pi$.   Let
\be
g_i = (h_i'(r) )^2 dr^2 + \sin^2(h_i(r)) g_{S^2}
\ee
where $h_i(r)$ is a smooth increasing function such that
\begin{eqnarray}
h_i(r) &=& r (\pi/2)/(\pi-\pi/(2i)) \textrm{ for } r\in[0, \pi - 1/i] \\
h_i(r) &=& \pi - (r-\pi)(1/(2i))(\pi-1/(2i)) \textrm{ for } r\in [\pi - 1/(2i) , \pi].
\end{eqnarray}
Then the diffeomorphism which maps $r \mapsto s=h_i(r)$ is an isometry
from $(M^3, g_i)$ to $(S^3, g_{S^3})$.

On any compact set $K \subset M\setminus S$, there exists a $j$
sufficiently large that $K \subset r^{-1}[0, \pi-1/j]$.  Taking $i \to \infty$
we see that on $K$, $h_i(r) \to r/2$ and $g_i$ converge smoothly to
\be
g_\infty= (1/2)^2 dr^2 + \sin^2(r/2) g_{S^2}.
\ee
Thus $(M\setminus S, g_\infty)$ is isometric to an open hemisphere
via the isometry which maps $r \mapsto s=r/2$.   The metric
completion is then the closed hemisphere and the settled completion
agrees with the metric completion because every point in the closed
hemisphere has positive lower density.

Setting $W_j =r^{-1}[0, \pi-1/j]$, we see that 
\be
\vol_{g_i}(\partial W_j) \le 4\pi.
\ee
Clearly the diameter, volume, Ricci curvature and contractibility
conditions all hold because the sequence of $(M, g_j)$ are all
isometric to spheres.   However
\be
\lim_{i\to \infty} \vol_{g_i}(M\setminus W_j) \ge \vol(S^3)/2.
\ee
\end{proof}

\subsection{Unbounded Volumes and Diameters}

Recall that below Theorem~\ref{Ricci-codim-thm}, we stated that
the diameter condition is not necessary when the manifold has nonnegative
Ricci curvature.   Here we see that the volume bound is still necessary:

\begin{example}\label{ex-cap-cyl}
There are metrics $g_i$ on the sphere $M^3$  with nonnegative Ricci curvature such that
$(M^3, g_i)$ converge smoothly away from a point singularity
$S=\{p_0\}$ to a complete noncompact manifold; In particular, 
converging to a hemisphere attached to a cylinder of length $k$ on the $r^{-1}[0, \pi-1/k)$ region.   
\end{example}

\begin{proof} 
For any $L \in \R$ large enough, define the warped metric $g_{L}$ on $[0,L] \times S^2$ as follows:
\be
	g_L(t) = dt^2 + \left( f_L(t) \right)^2 g_{S^2}
\ee
where,
\be
	f_L(t) = \sin(t)  \;\; \text{for}\;\; t \in [0 , \pi/2 ]
\ee
\be
	f_L(t) = 1  \;\; \text{for}\;\; t \in [\pi/2+1/100 , L - \pi/2-1/100 ]
\ee
\be
	f_L(t) = \sin(\pi + t - L)  \;\; \text{for}\;\; t \in [L - \pi/2 , L ]
\ee
and $f_L(t)$ smooth with $f_L''(t)<0$ elsewhere.
We will be calling $g_L$, the double torpedo metric (it is comprised of two torpedo metrics glued together from their cylindrical ends.) For any $L$, 
$g_L$ has nonnegative Ricci curvature. 

Let $\phi : [0,\pi] \to [0,\infty)$ be a smooth increasing function such that
\be \label{phi-1}
	\phi(r) = r    \;\; \text{for} \;\; r \in [0 , \pi/2]
\ee
with 
\be\label{phi-2}
	\lim_{r \to \pi} \phi(r) = \infty
\ee

For $ j > 2$, let $\phi_j(r): [0 , \pi] \to [0 , L_j = j + \pi/2 + 1]$ be a smooth increasing function such that
\be \label{phi-3}
	\phi_j(r) = \phi(r)  \;\; \text{for} \;\; r \in [0 , \phi^{-1}(j + \pi/2)],
\ee
and
\be \label{phi-4}
	\phi_j (r) = j + r-\pi/2 + 1 \textrm{ for } r \textrm{ near } \pi.
\ee

Again, we view $M^3 = S^3$ as a warped product with a warping function $r\in [0, \pi]$, such that $r(p_0)=\pi$.  Let 
\be
g_j(r) = \phi_j^{*} \left( g_{L_j} \right) = (\phi_j'(r) )^2 dr^2 + \left( f_{L_j} \right)^2(\phi_j(r)) g_{S^2}
\ee
Then the diffeomorphism $\phi_j$ is an isometry from $(M^3, g_j)$ to 
$(S^3, g_{L_j})$. On any compact set $K \subset M\setminus S$, there exists a $k$ sufficiently large that $K \subset r^{-1}[0, \pi-1/k]$.  Taking $j \to \infty$ we see that on $K$, $g_j$ converge to 
\be
g_\infty= (\phi'(r) )^2 dr^2 +  f^2(\phi(r)) g_{S^2}.
\ee
where,
\be
	f(r) = \sin(r) \;\; \text{for}\;\; r \in [0 , \pi/2-1/100]  \;\; \text{and}\;\; f(r) = 1 \;\; \text{for}\;\; r \in [\pi/2 +1/100, \infty)
\ee
which is a hemisphere smoothly attached to a cylinder of length $k$.

If we take $W_j = r^{-1}([0 , \pi - 1/j))$ then, we see that, $\vol_{g_i}(W_j)$ and $\vol_{g_i}(M \setminus W_j)$ are unbounded.  Since $(M \setminus S , g_\infty)$ is complete, it coincides with the metric completion. Since $(M \setminus S , g_\infty)$ is noncompact , $(M^3 , g_j)$ does not have Gromov - Hausdorff limit. Also since the volume is not finite, there is no intrinsic flat limit either.

Nevertheless, this example has $\vol(\partial W_j)\le 4\pi$ and $W_j$
are uniformly embedded, the sequence has nonnegative Ricci curvature
and a uniform contractibility function, $\rho(r)=r$ for $r\in (0, \pi/2]$.    
\end{proof}

\begin{example}\label{ex-diam-now}
There are metrics $g_i$ on the sphere $M^3$  with $Ricci\ge (n-1)H g$ 
such that
$(M^3, g_i)$ converge smoothly away from a point singularity
$S=\{p_0\}$ to a complete noncompact manifold; In particular, 
converging to a hemisphere attached to an infinitely long cusp.
\end{example}

\begin{proof} 
Let $h:[0, \infty) \to [0,\infty)$ be defined so that
$h(t)=\sin(t)$ for $t\in [0,\pi/2]$ and
$h(t) =e^{-t}$ for $t\in [\pi, \infty)$
and smooth in between so that 
\be
g= dt^2 + h^2(t) g_{S^2}
\ee
is a complete noncompact metric with finite volume over 
$[0, \infty)\times S^2$.   Observe that the sectional curvature
is uniformly bounded below by some negative constant, $H$.

For any $L \in \R$ large enough, we can find $\epsilon_L>0$
sufficiently small so that we may 
define a smooth 
warped metric $g_{L}$ on $[0,L] \times S^2$ as follows:
\be
	g_L(t) = dt^2 + \left( f_L(t) \right)^2 g_{S^2}
\ee
where,
\be
	f_L(t) = h(t)  \;\; \text{for}\;\; t \in [0 ,  L - 2\epsilon_L ]
\ee
and 
\be
	f_L(t) = \sin(\pi + t - L)  \;\; \text{for}\;\; t \in [L -\epsilon_L , L ]
\ee
and $f_L(t)$ smooth with $-f_L''(t)/f_L(t)>H$ elsewhere.
For any $L$, $g_L$ has sectional curvatures $\ge H$. 

Let $\phi : [0,\pi] \to [0,\infty)$ be a smooth increasing function 
as in the prior example.  In particular 
satisfying (\ref{phi-1}), (\ref{phi-2}), (\ref{phi-3})
and (\ref{phi-4}).

Again, we view $M^3 = S^3$ as a warped product with a warping function $r\in [0, \pi]$, such that $r(p_0)=\pi$.  Let 
\be
g_j(r) = \phi_j^{*} \left( g_{L_j} \right) = (\phi_j'(r) )^2 dr^2 + \left( f_{L_j} \right)^2(\phi_j(r)) g_{S^2}
\ee
Then the diffeomorphism $\phi_j$ is an isometry from $(M^3, g_j)$ to 
$(S^3, g_{L_j})$. On any compact set $K \subset M\setminus S$, there exists a $k$ sufficiently large that $K \subset r^{-1}[0, \pi-1/k]$.  Taking $j \to \infty$ we see that on $K$, $g_j$ converge to 
\be
g_\infty= (\phi'(r) )^2 dr^2 +  h^2(\phi(r)) g_{S^2}.
\ee
Since $(M \setminus S , g_\infty)$ is complete, it coincides with the metric completion. Since $(M \setminus S , g_\infty)$ is noncompact , $(M^3 , g_j)$ does not have Gromov - Hausdorff limit. 

If we take $W_j = r^{-1}([0 , \pi - 1/j))$ then, we see that, $\vol_{g_i}(W_j)$ and $\vol_{g_i}(M \setminus W_j)$ are bounded, $\vol(\partial W_j)\le 4\pi$ and 
$W_j$ are uniformly embedded.  So this proves the necessity of the 
diameter condition in Theorem~\ref{Ricci-codim-thm}.
\end{proof}

\subsection{Spheres with Splines}

The following examples are based upon examples in \cite{SorWen2}.

\begin{example}\label{ex-not-GH}
There are metrics $g_i$ on the sphere $M^3$ such that
$(M^3, g_i)$ converge smoothly away from a point singularity
$S=\left\{p_0\right\}$ yet we have
a single spline of finite length, $L$, becoming thinner and
thinner
so that the Gromov-Hausdorff limit is not the sphere while the 
intrinsic flat limit is just the sphere.   The metric completion of
$(M\setminus S, g_\infty)$ is also the sphere in this example.
\end{example}

A version of this example with positive scalar curvature will be given
in \cite{Lakzian-Diameter}.

\begin{proof}
More precisely the metrics $g_i$ are defined by
\be
g_i= h_i(r)^2dr^2 + f_i^2(r) g_{S^2} \textrm{ for } r\in [0,\pi]
\ee
where $f_i(r)= \sin(r)$ and
\be
h_i(r)= 1 + i \, \exp \left( \tfrac{ \left( \tfrac{1}{2i} \right)^2 }
{\left(r - \pi + \tfrac{2}{i}\right)\left(r - \pi + \tfrac{1}{i}\right)} \right) \,\,\chi_{\left[ \pi - \tfrac{2}{i} , \pi - \tfrac{1}{i}) \right]}.
\ee
Observe that on $r^{-1}[0, \pi-1/j)$ we have $h_i(r) =1$ for $i\ge 2j$.
So $g_i$ converge smoothly away from $p_0$ to the standard metric
on a sphere, $g_\infty$.  The metric  and settled completions of $(M\setminus \{p_0\}, g_\infty)$ are both the standard sphere.   

We will refer to $N_i=r^{-1}(\pi-2/i, \pi]$ with the metric $g_i$ as a {\em spline}.
Observe that
\be
\diam_{g_i}(M_j) \ge \int_0^\pi h_i(r) \, dr =
\pi-2/i + L +1/i 
\ee
where $L$ is the length of the spline:
\begin{eqnarray}
L&=&\int_{\pi-2/i}^{\pi-1/i} h_i(r) \, dr \\
&=&  \int_0^1 1+ e^{1/(4u(u-1))} \, du
\end{eqnarray}
Since the diameter of the Gromov-Hausdorff limit, when it exists,
is the limit of the diameters of the sequence, we see that the
Gromov-Hausdorff limit is not metric completion in this case.  We
will not provide an explicit proof that the Gromov-Hausdorff limit
is in fact the sphere with a line segment of length $D$ attached at
$p_0$.

Now taking $W_j = r^{-1} ([0 , \pi - 1/(2j)])$, we see that 
\be
\diam_{M_i} (W_j) \le \pi \textrm{ for }i \ge j,
\ee
\be
\vol(M_i) \le \vol(S^3, g_0) + \sin(1/(2i))L  \le V_0
\ee
\be
\vol_{g_i}(\partial W_j) \le 4\pi
\ee
\be
\vol_{g_i}(M\setminus W_j) \le \pi (1/(2j)^2) + \pi \sin(2/j)^2 L \le V_j 
\ee
where $\lim_{j \to \infty} V_j = 0$.   By Theorem~\ref{codim-thm}
we have the intrinsic flat limit is settled metric completion 
which is the sphere.   This example has no uniform
lower bound on Ricci curvature nor a uniform contractibility function
so it demonstrates the necessities of these conditions in all
of our theorems which require them to prove the Gromov-Hausdorff
limit exists and is the metric completion of $(M\setminus S, g_\infty)$.
\end{proof}

\begin{example}\label{ex-no-GH}
There are metrics $g_i$ on the sphere $M^3$ 
with uniformly bounded diameter and volume such that
$(M^3, g_i)$ converge smoothly away from a point singularity
$S=\left\{p_0\right\}$ and we have
increasingly many splines of length $L$ whose total volume goes to $0$
based in smaller and smaller neighborhoods of $S$.
The metric
completion of $(M\setminus S, g_\infty)$ is the round
sphere.  This is also the intrinsic flat limit.
The Gromov-Hausdorff limit, however, does not exist
since the number of balls of radius $L/2$ diverges to infinity.
\end{example}

A version of this example with positive scalar curvature will be given
in \cite{Lakzian-Diameter}.

\begin{proof}
Let $(M,g_i)$ be created by taking the standard sphere of radius 
$1$ and removing $i$ pairwise disjoint balls of radius $2/i^2$ from the ball of
radius $2/i$ about $p_0$.   Replace each of those balls with
a spline, $N_{i^2}$ from the previous example.  Each spline has
length $L$ as in the previous example, so there are $i$ balls of radius
$L/2$ centered at the tips of the splines.  By Gromov's Compactness
Theorem's Converse, there is no subsequence converging in the
Gromov-Hausdorff sense.

However, $\diam(M, g_i)\le \pi + 2L$.

Each $(M, g_i)$ is diffeomorphic to $S^3$, via the identity map
outside of the splines and via the diffeomorphism from each spline to
the ball it has replaced.   Taking any precompact set $W\subset S^3$, such that
$p_0\notin \bar{W}$, we can take $i$ sufficiently large that
$W\cap B_{p_0}(2/i)=\emptyset$, so that $g_i$ is then the standard
metric on $W_i$.  So we see that $(M, g_i)$ converges smoothly
to a standard sphere with $p_0$ removed.  The metric and
settled completions are one again the standard sphere.

Let $W_j= S^3 \setminus B_{p_0}(2/j)$ where the ball is measured using
the standard metric on $S^3$ so that for $j \le i$ there are no splines
within $(W_j, g_i)$.
\be
\diam_{M_i} (W_j) \le \pi \textrm{ for }i \ge j,
\ee
\be
\vol(M_i) \le \vol(S^3, g_0) + i \sin(1/(2i^2))L  \le V_0
\ee
\be
\vol_{g_i}(\partial W_j) \le 4\pi \sin(1/2i^2) i \le A_0
\ee
\be
\vol_{g_i}(M\setminus W_j) \le \pi i (1/(2i^2)^2) + \pi i \sin(2/i^2)^2 L \le V_j 
\ee
where $\lim_{j \to \infty} V_j = 0$.    By Theorem~\ref{codim-thm}
we have the intrinsic flat limit is settled metric completion 
which is the sphere.   This example has no uniform
lower bound on Ricci curvature nor a uniform contractibility function
so it demonstrates the necessities of these conditions in all
of our theorems which require them to prove the Gromov-Hausdorff
limit exists and is the metric completion of $(M\setminus S, g_\infty)$.
\end{proof}

\begin{example}\label{ex-not-bounded}
There are metrics $g_i$ on the sphere $M^3$
with uniformly bounded volume such that
$(M^3, g_i)$ converge smoothly away from a point singularity
$S=\left\{p_0\right\}$ and we have
a single spline of increasing length whose volume goes to $0$
and width goes to $0$
contained in smaller and smaller neighborhoods of $S$.
The metric
completion of $(M\setminus S, g_\infty)$ is the round
sphere.  This is also the intrinsic flat limit.
The Gromov-Hausdorff limit, however, does not exist
since the diameter diverges to infinity.
\end{example}

\begin{proof}
More precisely the metrics $g_i$ are defined by
\be
g_i= h_i(r)^2dr^2 + f_i^2(r) g_{S^2} \textrm{ for } r\in [0,\pi]
\ee
where $f_i(r)= \sin(r)$ and
\be
h_i(r)= 1 + i^2 \, \exp \left( \tfrac{ \left( \tfrac{1}{2i} \right)^2 }
{\left(r - \pi + \tfrac{2}{i}\right)\left(r - \pi + \tfrac{1}{i}\right)} \right) \,\,\chi_{\left[ \pi - \tfrac{2}{i} , \pi - \tfrac{1}{i}) \right]}.
\ee
Observe that on $r^{-1}[0, \pi-1/j)$ we have $h_i(r) =1$ for $i\ge 2j$.
So $g_i$ converge smoothly away from $p_0$ to the standard metric
on a sphere, $g_\infty$.  The metric  and settled completions of $(M\setminus \{p_0\}, g_\infty)$ are both the standard sphere.   

Observe that
\be
\diam_{g_i}(M_j) \ge \int_0^\pi h_i(r) \, dr =
\pi-2/i + L_i +1/i 
\ee
where $L_i$ is the length of the spline:
\begin{eqnarray}
L_i&=&\int_{\pi-2/i}^{\pi-1/i} h_i(r) \, dr \\
&=& i \int_0^1 e^{1/(4u(u-1))} \, du=iL
\end{eqnarray}

Now taking $U_j = r^{-1} ([0 , \pi - 1/(2j)])$, we see that 
\be
\diam_{M_i} (U_j) \le \pi \textrm{ for }i \ge j,
\ee
\be
\vol(M_i) \le \vol(S^3, g_0) + \sin(1/(2i))L_i  \le V_0
\ee
\be
\vol_{g_i}(\partial W_j) \le 4\pi 1
\ee
\be
\vol_{g_i}(M\setminus W_j) \le \pi (1/(2j)^2) + \pi \sin(2/j)^2 L_j \le V_j 
\ee
where $\lim_{j \to \infty} V_j = 0+\lim_{j\to\infty}\sin^(2/j)jL=0$.   
 By Theorem~\ref{codim-thm}
we have the intrinsic flat limit is settled metric completion 
which is the sphere.   This example has no uniform
lower bound on Ricci curvature nor a uniform contractibility function
so it demonstrates the necessities of these conditions in all
of our theorems which require them to prove the Gromov-Hausdorff
limit exists and is the metric completion of $(M\setminus S, g_\infty)$.
\end{proof}

\subsection{Unbounded Boundary Volumes}

Here we have examples demonstrating the necessity of the
$\vol(\partial W)$ conditions in our theorems.

\begin{example} \label{ex-flamenco}
There are $(M^3, g_j)$ all diffeomorphic to the standard sphere which
converge smoothly away from a singular set $S=\{p_0\}$ to
$(M\setminus S, g_\infty)$ with the metric
\be
    dr^2+ f^2(r) g_{S^2}  \textrm{ where } r\in [0,\pi)
\ee
such that $f(r)=\sin(r)$ on $[0,\pi/2]$ and
$\vol_{g_\infty}(M\setminus S)$ is finite but
\be
\lim_{r\to \pi} f(r)=\infty.
\ee
The metric completion agrees with the settled completion of
$(M_\infty, d_\infty)$, which is not an integral current space
because the area of the boundary is infinite. 
The diameter of this example is clearly
$\le 2\pi$.
This example demonstrates that (\ref{area-2}) of Theorem~\ref{codim-thm} 
is a necessary condition.
\end{example}

\begin{proof}
Let $g_{\infty} = dr^2 + f^2(r) g_{S^2}$ where $f(r)$ is a smooth 
increasing function such that:
\be
    f(r) = \sin(r) \; \text{for} \; r \in [0,\pi/2]
\ee
and 
\be
    f(r) = (\pi - r)^{- \frac{1}{4}} \; \text{for} \; r \in [\pi/2 +1/2 , \pi).
\ee
Then we have:
\be
    \lim_{r \to \pi}f(r) = \infty
\ee
and
\be
\vol_{g_{\infty}}(M \setminus S) = \int_0^{\pi} \; \omega_2 f^{2}(r) \; \mathrm{d}r < \infty.
\ee

Now let $g_i = dr^2  + f_i^2(r)g_{S^2}$  where, $f_i(r)\le f(r)$ is a smooth function given by:
\be
   f_i(r) = f(r)  \; \text{for} \; r \in [0,\pi - 1/i] 
\ee
and
\be
   f_i(r) = \sin(r)  \; \text{for} \; r \in [\pi - 1/(2i) , \pi].
\ee
It is easy to see that $g_i$ converges to $g_{\infty}$ away from the singular point. 

Taking $W_j = r^{-1}([0 , \pi - 1/j])$, we see that all conditions 
of Theorem~\ref{codim-thm}
are satisfied except that $\vol_{g_i}(\partial W_j)$ is not bounded.
\end{proof}

\begin{rmrk}\label{maybe-contr-area}
The sequence in Example~\ref{ex-flamenco} 
also appears to satisfy uniform local contractibility
estimates as there is no cusp effect.  
The Gromov-Hausdorff limit appears to be
the one point completion of $(M\setminus S, g_\infty)$.  The
metric completion of $(M\setminus S, g_\infty)$ includes infinitely
many new points.
So this example may
well also prove necessity of boundary volume
estimates in Theorem~\ref{c-codim-thm}.   
\end{rmrk}

\begin{rmrk} \label{no-ex-Ricci-area}
It is an open question whether the area hypothesis, (\ref{area-2}),
is a necessary hypothesis in Theorem~\ref{Ricci-codim-thm}.  It
is possibly that one might always find a new exhaustion satisfying
this condition as long as one has an exhaustion satisfying all the
other hypothesis of the theorem.   
\end{rmrk}

\subsection{Torus to Square}

\begin{example} \label{ex-to-torus-square}
There are $(M^2, g_j)$ all isometric to
the flat torus, $S^1\times S^1$
which converge smoothly away from a singular set
\be
S= \left(S^1\times\{0\} \right) \cup \left( \{0\} \times S^1 \right) \subset S^1\times S^1
\ee
to
\be
(M\setminus S, g_\infty)= \left( (0,2\pi)\times(0,2\pi), dt^2+ds^2\right).
\ee
So the metric completion and the settled completions are both
\be
M_\infty= [0,2\pi]\times[0,2\pi]
\ee
with the standard flat metric, while the intrinsic flat and Gromov
Hausdorff limits are the flat torus $S^1\times S^1$.
Thus the codimension condition and the uniform embeddedness
conditions are necessary in all our theorems.
\end{example}

\begin{proof}
Let $W_k = (1/k, 2\pi - 1/k) \times (1/k, 2\pi - 1/k)$. Then, for $j$ large enough:
\be\label{lambda-ijk}
\lambda_{i,j,k}= \sup_{x,y\in W_j} |d_{W_k}(x,y)- d_{M}(x,y)| = 2\pi - 4/j
\ee
Therefore,
\be
\limsup_{j\to \infty} \limsup_{k\to \infty} \limsup_{i\to\infty} \lambda_{i,j,k} = 2\pi
\ee
and
\be
\limsup_{j\to \infty} \limsup_{i\to \infty} \limsup_{k\to\infty} \lambda_{i,j,k} = 2\pi
\ee
So we fail uniform embeddedness as well as the codimension 2
condition.  

Observe that the sequence satisfies Ricci curvature, contractibility,
diameter and volume conditions on $M_i$ because all the $M_i$
are the standard flat torus.  Furthermore $\vol_{g_i}(M\setminus W_j)\le 4/j$
and $\vol_{g_i}(\partial W_j) \le 4$.
\end{proof}

\section{Explicit Estimates with Isometric Embeddings} \label{Sect-estimates}

\textcolor{blue}{The work in this section is fine.}

In this section we construct isometric embeddings of
Riemannian manifolds into metric spaces to provide explicit
bounds on the Gromov-Hausdorff and intrinsic flat distances
between them.  

Recall the definition of isometric embedding given
in Subsection~\ref{subsect-iso}.  In fact we construct more general mappings.   

\begin{defn}
Let $D>0$ and $M, M'$ are geodesic metric spaces.  
We say that $\varphi: M \to M'$ is a 
\textcolor{black}{$D$-geodesic
embedding}
if for any smooth minimal geodesic,
$\gamma:[0,1]\to M$, of
length $\le D$ we have
\be\label{star-0}
d_{M'}(\varphi(\gamma(0)), \varphi(\gamma(1)))=L(\gamma).
\ee
\end{defn}

When $D=\diam(M)$, then $D$-geodesic embeddings are
isometric embeddings .   The
advantage of this more general notion is that it can be applied
when $M$ is not complete.   This will be essential to proving
Theorem~\ref{thm-subdiffeo}.

\subsection{Hemispherical Embeddings}

In this subsection we prove the following key proposition:

\begin{prop} \label{prop-hem}
Given a manifold $M$ with Riemannian metrics
$g_1$ and $g_2$ and $D_1, D_2, t_1, t_2>0$.  Let
$M'=M\times[t_1,t_2]$ and let $\varphi_i: M_i\to M'$
be defined by $\varphi_i(p)=(p,t_i)$.   If a metric
$g'$ on $M'$ satisfies
\be \label{ineq-prop-hem}
g' \ge  dt^2 +  \cos^2((t-t_i)\pi/D_i) g_i  \textrm{ for } |t-t_i|<D_i/2
\ee
and
\be \label{eq-prop-hem}
g'=dt^2 +g_i \textrm{ on }M\times \left\{t_i\right\}\subset M'
\ee
then any geodesic, $\gamma:[0,1]\to M_i$,
of length $\le D_i$ satisfies (\ref{star-0}).   If, the diameter is bounded,
$\diam_{g_i}(M)\le D_i$, then $\varphi_i$ is an isometric
embedding.

Furthermore, for $q_1, q_2\in M$, we have
\be\label{extra-edges}
d_{M'}(\varphi_1(q_1), \varphi_2(q_2))\ge
d_{M_i}(q_1,q_2).
\ee
\end{prop}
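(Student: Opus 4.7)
The plan is to establish the main distance identity in three steps: an explicit upper bound by lifting the geodesic into the slice $M\times\{t_i\}$, a two-dimensional warped-product reduction of any competitor curve, and a round-sphere comparison to conclude. The diameter case and the extra-edge inequality will follow from the same machinery.

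For the upper bound, given a minimal geodesic $\gamma\colon[0,1]\to M_i$ of length $L=L_{g_i}(\gamma)\le D_i$, I lift it horizontally to $\tilde\gamma(s)=(\gamma(s),t_i)$, a curve in the slice $M\times\{t_i\}\subset M'$. The equality hypothesis (\ref{eq-prop-hem}) ensures that $g'$ restricted to tangent vectors of this slice coincides with $g_i$, giving $L_{g'}(\tilde\gamma)=L$. Hence $d_{M'}(\varphi_i(\gamma(0)),\varphi_i(\gamma(1)))\le L$.

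For the matching lower bound, take any curve $\alpha(s)=(\beta(s),t(s))$ in $M'$ joining the endpoints and introduce the $g_i$-arclength $\ell(s)=\int_0^s\sqrt{g_i(\beta',\beta')}\,du$, so that $g_i(\beta',\beta')=\ell'(s)^2$. If $\alpha$ remains inside the strip $\{|t-t_i|<D_i/2\}$ where (\ref{ineq-prop-hem}) applies, the pointwise inequality gives
\be
L_{g'}(\alpha)\ \ge\ \int_0^1\sqrt{t'(s)^2+\cos^2\!\bigl((t(s)-t_i)\pi/D_i\bigr)\,\ell'(s)^2}\,ds,
\ee
which is exactly the length, in the planar warped-product metric $h_2=dY^2+\cos^2((Y-t_i)\pi/D_i)\,dX^2$, of the curve $s\mapsto(t(s),\ell(s))$ joining $(t_i,0)$ to $(t_i,\ell(1))$ with $\ell(1)=L_{g_i}(\beta)\ge L$. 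The metric $h_2$ is (the universal cover of) the round sphere of radius $R=D_i/\pi$ with equator $\{Y=t_i\}$, via the substitution $\theta=\pi/2-(Y-t_i)/R$, $\phi=X/R$; on this cover, the distance between two equator points at horizontal separation $X$ equals $\min(X,\pi R)=\min(X,D_i)$, with the equator arc realizing the infimum when $X\le D_i$. Thus $L_{g'}(\alpha)\ge\min(\ell(1),D_i)\ge L$.

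If instead $\alpha$ exits the strip, then the $t$-coordinate undergoes a total variation of at least $D_i$ (ascending to $|t-t_i|=D_i/2$ and returning to $t_i$), while (\ref{ineq-prop-hem}) passes to the closed strip by continuity and implies $g'\ge dt^2$ there; so $L_{g'}(\alpha)\ge D_i\ge L$. The two cases together establish (\ref{star-0}), showing $\varphi_i$ is a $D_i$-geodesic embedding, and the diameter-bounded conclusion is immediate. The inequality (\ref{extra-edges}) follows by applying the same two-dimensional reduction on the strip around $t_i$: the planar curve now runs from the equator point $(t_i,0)$ to a point with $X$-coordinate $\ell(1)\ge d_{M_i}(q_1,q_2)$, and the sphere estimate combined with $g'\ge dt^2$ (handling the portion of $\alpha$ outside the strip) bounds $L_{g'}(\alpha)$ below by $d_{M_i}(q_1,q_2)$. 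The main obstacle throughout is the sphere-comparison step: one must rule out competitor curves that detour through the degenerate polar cross-section where (\ref{ineq-prop-hem}) no longer controls horizontal motion, and the key point is that the vertical bound $g'\ge dt^2$ forces any such detour to cost at least $D_i$ in vertical travel, matching the antipodal-distance threshold $\pi R=D_i$ on the sphere of radius $D_i/\pi$.
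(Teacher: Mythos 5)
Your proof of the main claim (the $D_i$-geodesic embedding property) is correct and takes a genuinely different route from the paper. The paper proves Lemma~\ref{lem-hem} by contradiction: it posits a shorter competitor curve, replaces $g'$ by the pure warped-product metric $g''=dt^2+\cos^2(t\pi/D)g$, takes a $g''$-minimizing curve $C=(x,t)$, invokes the warped-product fact that $x$ must be a geodesic in $M$, and then builds an explicit isometric immersion $F$ of a spherical region into $M'$ through which $C$ factors. You instead argue directly: for an arbitrary competitor $\alpha=(\beta,t)$ you introduce the $g_i$-arclength $\ell$ of $\beta$, observe that $L_{g'}(\alpha)\ge L_{h_2}(t,\ell)$ pointwise, and then use the round-sphere identification of $h_2$ plus the equator-minimality fact. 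This avoids both the contradiction setup and the warped-product geodesic-projection lemma, works for arbitrary (not necessarily geodesic-projecting) competitors, and makes explicit the exit-from-strip case, which the paper's deduction of the Proposition from Lemma~\ref{lem-hem} passes over (the Lemma is proved on $M\times[0,D/2]$, a smaller space than $M'$). This is a valid and arguably cleaner argument. For the secondary inequality~(\ref{extra-edges}), however, your treatment is only a sketch — you gesture at ``the sphere estimate'' without actually carrying out the law-of-cosines computation that the paper writes out for the planar curve from $(t_1,0)$ to $(t_2,\ell(1))$ — so that part remains at the level of an outline rather than a proof. (Note that the spherical inequality $\tfrac{D_1}{\pi}\arccos\bigl(\cos(\pi d/D_1)\cos(\pi|t_1-t_2|/D_1)\bigr)\ge d$ used by the paper at this step is only valid when $d\le D_1/2$; you would face the same issue if you fleshed out your sketch, so this is not a gap that is specific to your approach, but the step does deserve more care than either account gives it.)
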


\begin{figure}[h] 
    \centering
    \includegraphics[width=5in]{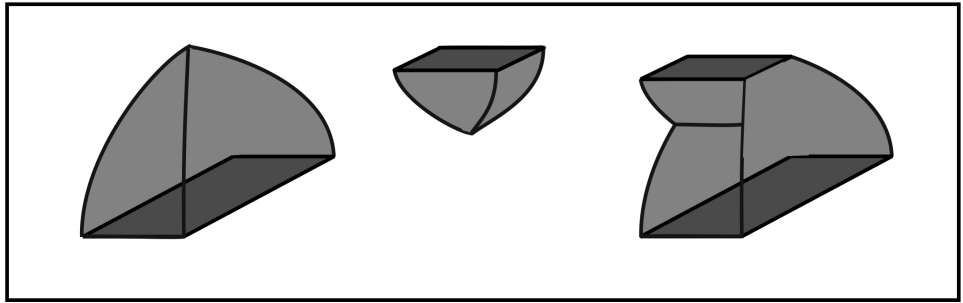}
    \caption{First we see a pair of flat tori, $M_i=(M, g_i)$, isometrically embedded
in their own hemispherical suspensions.
Then they both isometrically embed into a common $M'$.}
    \label{fig-cor-hem-ex}
  \end{figure}

\begin{example}
Let $M=S^1 \times[0,1]$ and let $g_i= a_i^2 d\theta^2 + b_i^2 dl^2$,
where $a_1>a_2>0$ and $b_2>b_1>0$.   Take
\be
D_i = \diam_{g_i}(M_i)=\sqrt{(\pi a_i)^2 + b_i^2}.
\ee
By Proposition~\ref{prop-hem}, we know that if 
we can find $t_i \in \mathbb{R}$ and functions $a,b: [t_1, t_2]\to \mathbb{R}$
such that
\begin{eqnarray}
a(t)&\ge&  \max_{i=1,2} a_i h_i(t)\\
b(t)&\ge&  \max_{i=1,2} b_i h_i(t)
\end{eqnarray}
where $h_i(t)= \max\{\cos((t-t_i)\pi/D_i), 0\}$
and
\be \label{need-this-1}
a(t_1)=a_1, \,\,a(t_2)=a_2, \,\, b(t_1)=b_1 \,\, \textrm{ and } b(t_2)=b_2,
\ee
then we have isometric embeddings $\varphi_i: (M,g_i) \to (M',g')$
where
\be \label{this-this}
g' = dt^2 +a^2(t) d\theta^2 + b^2(t) dl^2.
\ee
To obtain (\ref{need-this-1}), we must choose $t_2-t_1$
sufficiently large that
\be
a_1 h_1(t_2-t_1) \le a_2 \textrm{ and }
b_2 h_2(t_2-t_1) \le b_1.
\ee
Since $a_2/a_1, b_2/b_1 < 1$ this is achieved by taking
\be
|t_2-t_1| \ge \max \left\{ \frac{D_1}{\pi} \arccos\left(\frac{a_2}{a_1}\right),
\frac{D_2}{\pi} \arccos\left(\frac{b_1}{b_2}\right) \right\}.
\ee
See Figure~\ref{fig-cor-hem-ex}.
\end{example}

Before we prove the proposition we prove the
following lemma.   Recall that equators in spheres
isometrically embed into the hemispheres.  Here
we create standard
isometric embeddings of Riemannian
manifolds into hemispherically warped product spaces. 
The idea comes from Gromov's notions in filling
Riemannian manifolds \cite{Gromov-filling}.

\begin{lem} \label{lem-hem}
Given a compact Riemannian manifold $(M,g)$ and $D>0$.  Let
$M'=M\times[0,D/2]$ and let $\varphi: M\to M'$
be defined by $\varphi(p)=(p,0)$.   If a metric
$g'$ on $M'$ satisfies
\be \label{ineq-lem-hem}
g' \ge  dt^2 +  \cos^2(t\pi/D) g  \textrm{ on } M' 
\ee
and
\be \label{eq-lem-hem}
g'=dt^2 +g \textrm{ on }M\times \left\{0\right\}\subset M'
\ee
then any geodesic, $\gamma:[0,1]\to M$,
of length $\le D$ satisfies (\ref{star-0}).   If
$\diam(M)\le D$ then $\varphi$ is an isometric
embedding.
\end{lem}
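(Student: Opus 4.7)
Set $p = \gamma(0)$, $q = \gamma(1)$, and $L = L(\gamma)$. Since $\gamma$ is a minimizing geodesic of length $L \leq D$, we have $L = d_g(p, q)$. We will show $d_{M'}(\varphi(p), \varphi(q)) = L$ by proving both inequalities separately.

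The upper bound $d_{M'}(\varphi(p), \varphi(q)) \leq L$ is immediate from (\ref{eq-lem-hem}): the horizontal lift $s \mapsto (\gamma(s), 0)$ is a curve in $M'$ whose $g'$-length equals $L_g(\gamma) = L$.

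For the lower bound, let $\sigma(s) = (c(s), t(s))$ be any smooth curve in $M'$ from $\varphi(p)$ to $\varphi(q)$. The pointwise bound (\ref{ineq-lem-hem}) yields
\begin{equation}
L_{g'}(\sigma) \;\geq\; \int_0^1 \sqrt{t'(s)^2 + \cos^2(\pi t(s)/D)\,|c'(s)|_g^2}\,ds.
\end{equation}
Setting $\ell(s) := \int_0^s |c'(u)|_g\,du$, the right-hand side is exactly the length of the planar curve $\tilde\sigma(s) := (\ell(s), t(s))$ in the strip $\mathbb{R} \times [0, D/2]$ equipped with the warped metric $h := dt^2 + \cos^2(\pi t/D)\,d\ell^2$, running from $(0, 0)$ to $(L_g(c), 0)$. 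The crucial geometric input is that $h$ is the pullback of the round metric on the sphere of radius $r := D/\pi$ under the Riemannian immersion
\begin{equation}
F(\ell, t) \;=\; \bigl(r\cos(t/r)\cos(\ell/r),\; r\cos(t/r)\sin(\ell/r),\; r\sin(t/r)\bigr) \,\in\, S^2_{D/\pi}.
\end{equation}
Consequently $L_h(\tilde\sigma) = L_{S^2_r}(F \circ \tilde\sigma)$ dominates the $S^2_r$-distance between the images of the endpoints.

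We now split on the size of $L_g(c)$. If $L_g(c) \leq D$, both $F(0,0)$ and $F(L_g(c), 0)$ lie on the equator at angular separation $L_g(c)/r \leq \pi$, so their $S^2_r$-distance equals $L_g(c)$; thus $L_{g'}(\sigma) \geq L_g(c) \geq d_g(p, q) = L$. If $L_g(c) > D$, let $s^*$ be the first time $\ell(s^*) = D$ and split $\tilde\sigma = \tilde\sigma_1 \ast \tilde\sigma_2$ at $s^*$. A short computation shows $F(0, 0)$ and $F(D, t(s^*))$ sit at opposite equatorial longitudes, so their spherical distance is $D - t(s^*)$, whence $L_h(\tilde\sigma_1) \geq D - t(s^*)$; meanwhile $\tilde\sigma_2$ has $t$-coordinate dropping from $t(s^*)$ back to $0$, so the $dt^2$ term in $h$ forces $L_h(\tilde\sigma_2) \geq t(s^*)$. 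Summing gives $L_{g'}(\sigma) \geq D \geq L$. Taking the infimum over $\sigma$ completes the lower bound, and hence (\ref{star-0}). The final assertion follows immediately: when $\diam(M) \leq D$, Hopf--Rinow on the compact manifold $M$ produces a minimizing geodesic of length $d_g(p, q) \leq D$ between any pair of points, so $\varphi$ is an isometric embedding.

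\textbf{Main obstacle.} The technical heart is the sphere comparison, recognizing that the radius $r = D/\pi$ matches the warping exactly so that equatorial endpoints remain within a hemisphere precisely when $L_g(c) \leq D$. The overlong case $L_g(c) > D$, where naive spherical distance would shrink past antipodes, is salvaged by cutting $\tilde\sigma$ at the first time $\ell = D$ and recovering the missing $t(s^*)$ from the vertical contribution on the remaining segment.
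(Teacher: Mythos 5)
Your proof is correct, and it takes a genuinely slightly different route from the paper's, though the core geometric input — that the warping factor $\cos(\pi t/D)$ is exactly the one that turns the two-dimensional strip into a hemisphere of the round sphere of radius $D/\pi$ — is the same. The paper argues by contradiction: it passes to the warped metric $g''=dt^2+\cos^2(\pi t/D)g$, takes a $g''$-minimizing geodesic $C$, uses the warped-product structure to conclude that its $M$-projection $x$ is an $M$-minimizing geodesic of length $h=d_M(\gamma(0),\gamma(1))\le D$, and then observes that $C$ lies in the image of the map $F(s,t)=(x(s/h),t)$, which isometrically embeds the 2D strip $[0,h]\times[0,D/2]$ into $S^2$; the spherical distance bound then gives the contradiction. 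You instead give a direct lower bound for an arbitrary competitor $\sigma=(c,t)$, replacing the $M$-component by the running arclength $\ell(s)$ to reduce to a planar curve $\tilde\sigma$ in the 2D warped strip, and you push $\tilde\sigma$ to the sphere via the explicit immersion $F$. The price you pay for not passing to a minimizing geodesic is that your planar endpoint $(\ell(1),0)$ may have $\ell(1)>D$, so the naive spherical-distance comparison could see the endpoints "past antipodal"; you handle this by splitting at the first time $\ell=D$, bounding the first piece by the spherical distance $D-t(s^*)$ to the antipodal meridian and recovering the missing $t(s^*)$ from the $dt^2$ contribution on the second piece. Your argument is slightly more elementary (it avoids appealing to the fact that minimizing geodesics in a warped product have minimizing fiber projections), at the cost of the explicit case analysis, while the paper's minimizing-geodesic reduction packages that case analysis into the length bound $h\le D$ automatically.
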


Here the hemispherical suspension, $M'$, is a well defined
metric space but not necessarily a smooth manifold as can be seen, for example,
on the left side of Figure~\ref{fig-lem-hem}.  The inspiration for using
a hemispherical suspension comes from Gromov's work on filling Riemannian
manifolds \cite{Gromov-filling}.

\begin{proof}
Assume not.  There exists a geodesic $\gamma:[0,1]\to M$
of length $\le D$, and a curve $\sigma: [0,1]\to M'$
running from $\gamma(0)$ to $\gamma(1)$ of length 
$L_{g'}(\sigma)< L_g(\gamma)$.    If we replace the metric
$g'$ by $g''=dt^2 +  \cos^2(t\pi/D) g$, then 
$L_{g''}(\sigma)<L_g(\gamma)$.

So there exists a curve $C(s)=(x(s), t(s)) \in M\times [0,D/2]$
which is minimizing with respect to $g''$
between its endpoints $C(0)=(x(0),0)=\gamma(0)$
and $C(1)=(x(1), 0)=\gamma(1)$ such that 
\be\label{no-no}
L_{g''}(C) \le L_{g''}(\sigma)\le L_{g'}(\sigma)< L_g(\gamma)=d_M(x(0), x(1))\le D.  
\ee
Since $C:[0,1]\to M'$
is a minimizing geodesic in the warped product, $x:[0,1]\to M$, is a
minimizing geodesic in $M$.  We choose the parameter $s$ so that $x$
is parametrized proportional to arclength and let $h$ be the length
of the geodesic $x$, so we have $h=d_M(x(0), x(1)) \le D$. See Figure~\ref{fig-lem-hem}.

\begin{figure}[h] 
   \centering
   \includegraphics[width=5in]{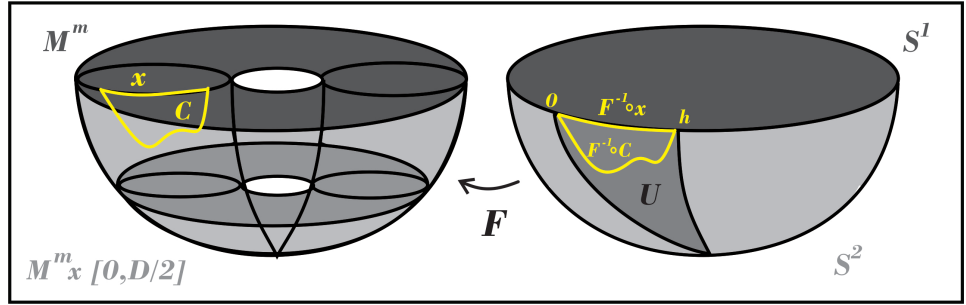}
   \caption{On the left we have a smooth torus, $M^m$, which is warped with a
cosine function to create the curved manifold, $M^m \times [0,D/2]$,
in which $C$ lies.  On the right we see the the set $U$ viewed as a subset
of a hemisphere created using the same warping function. }
   \label{fig-lem-hem}
\end{figure}

Observe that $F:[0,h]\times [0,D/2] \to M'$ defined by $F(s,t)=(x(s/h),t)$
is an isometric embedding of a region, $U$, in the standard round sphere, $S^2$,
of diameter $D$ into $M'$.  That is the metric on $U$ is
\be
dt^2 + \cos^2(t\pi/D) ds^2
\ee
and for any curve $\gamma:[a,b]\to U$ where $\gamma(u)=(\gamma_s(u), \gamma_t(u))$
we have, by (\ref{ineq-lem-hem}),
\begin{eqnarray*}
g''((F \circ \gamma)'(u),(F \circ \gamma)'(u))&=&
(dt^2 +  \cos^2(t\pi/D) g) ((F \circ \gamma)'(u),F \circ \gamma)'(u))\\
&=&
\gamma_t'(u)^2 +  \cos^2(t\pi/D) g ((x\circ \gamma_s)'(u/h),(x\circ \gamma_s)'(u/h))/h^2\\
&=&
\gamma_t'(u)^2 +  \cos^2(t\pi/D) |\gamma_s'(u)|^2 g (x'(u/h),x'(u/h))/h^2\\
&=&
\gamma_t'(u)^2 +  \cos^2(t\pi/D) |\gamma_s'(u)|^2 \textrm{ because } g(x',x')=h^2\\
&=&
(dt^2+ \cos^2(t\pi/D) ds^2) (\gamma'(u), \gamma'(u)).
\end{eqnarray*}
In particular, $L(F \circ x)=L(x)$ by (\ref{eq-lem-hem}).

Furthermore $C(s)\subset F(U)$.  So $F^{-1}\circ C$
is a curve in $S^2$ running between $F^{-1}(C(0))$ and $F^{-1}(C(1))$.
Thus
\be
L_{g''}(C) \ge d_{S^2}(F^{-1}(C(0)), F^{-1}(C(1)))= L(F^{-1}\circ x)
=d_M(x(0),x(1))
\ee
because $F^{-1}\circ x$ runs along a great circle in $S^2$ and has length $<D$.
This contradicts (\ref{no-no}).
\end{proof}

We may now apply Lemma~\ref{lem-hem} to prove Proposition~\ref{prop-hem}:

\begin{proof}
By Lemma~\ref{lem-hem}, (\ref{ineq-prop-hem}) and
(\ref{eq-prop-hem}), we see that  any geodesic, $\gamma:[0,1]\to M_i$,
of length $\le D_i$ satisfies (\ref{star-0}).  

Given $q_1, q_2\in M$, let $\gamma:[0,1]\to M'$ be
a length minimizing geodesic from $\varphi_1(q_1)$ to
$\varphi_2(q_2)$.  So
\be
\gamma(s)=(c(s), t(s))\in M \times [t_1, t_2].
\ee
For $f_i(t)=\cos((t-t_i)\pi/D_i)$
\begin{eqnarray*}
L_{g'}(\gamma)&=&\int_0^1 g'(\gamma'(s), \gamma'(s)) \, ds \\
&\ge &\int_0^1 \sqrt{t'(s)^2 + \max_{i=1,2}f_i^2(t(s)) g_i(c'(s),c'(s))} \, ds \\
&\ge &\int_0^1 \sqrt{t'(s)^2 + f_1^2(t(s)) g_1(c'(s),c'(s))} \, ds \\
&\ge &\int_0^1 \sqrt{t'(s)^2 + f_1^2(t(s)) g_1(\bar{c}'(s),\bar{c}'(s))} \, ds 
\end{eqnarray*}
where $\bar{c}$ is a length minimizing geodesic in $(M,g_1)$
from $c(0)$ to $c(1)$ parametrized proportional to arclength
of length $h=d_{g_1}(c(0),c(1))$.    Thus
\be
L_{g'}(\gamma)\ge \int_0^1 \sqrt{t'(s)^2 + f_1^2(t(s)) h^2 }\, ds. 
\ee
This integral is the length of a curve in a hemisphere of 
diameter $D_1$ running from a point $(0,h)$ on the equator
to a point $(|t_1-t_2|,0)$.   So it is greater than or equal to
the length of the third
side of a triangle opposite a right angle with legs of
length $d_{M_1}(q_1,q_2)$ and $|t_1-t_2|$.  Applying the
Spherical Law of Cosines rescaled we obtain 
\begin{eqnarray}
d_{M'}(\varphi_1(q_1), \varphi_2(q_2))&\ge&
\frac{D_1}{\pi}
\arccos\left( \cos\left(\frac{\pi d_{M_1}(q_1, q_2)}{D_1}\right)
\cos\left(\frac{\pi |t_1-t_2|}{D_1}\right) \right)\\
&\ge& d_{M_1}(q_1,q_2).
\end{eqnarray}
\end{proof}

\subsection{Estimating Distances between Manifolds}

The Gromov-Hausdorff distance between
a pair of metric spaces was estimated in terms of the Lipschitz distance
between them in \cite{Gromov-metric}.
In \cite{SorWen2}, the intrinsic flat distance between
a pair of integral current spaces was estimated in terms
of the Lipschitz distance between them.   Here we give a simple proof
estimating these distances between Riemannian manifolds
using explicit isometric embeddings into a common Riemannian
manifold.   Recall Definitions~\ref{metric-completion}
and~\ref{defn-positive-density}.

\begin{lem} \label{lem-squeeze-Z}
Suppose $M_1=(M,g_1)$ and $M_2=(M,g_2)$ are diffeomorphic
oriented precompact
Riemannian manifolds and suppose there exists $\epsilon>0$ such that
\be
g_1(V,V) < (1+\epsilon)^2 g_2(V,V) \textrm{ and }
g_2(V,V) < (1+\epsilon)^2 g_1(V,V) \qquad \forall \, V \in TM.
\ee
Then for any
\be
a_1> \frac{\arccos(1+\epsilon)^{-1} }{\pi}\diam(M_2)
\ee
and
\be
a_2> \frac{\arccos(1+\epsilon)^{-1}}{\pi} \diam(M_1),
\ee
there is a pair of isometric embeddings
$\varphi_i:M_i \to M'=\bar{M} \times [t_1, t_2]$ with a metric
as in Proposition~\ref{prop-hem}
where $t_2-t_1\ge \max\left\{a_1, a_2\right\}$.

Thus the Gromov-Hausdorff distance
between the metric completions is bounded,
\be \label{sq-Z-GH}
d_{GH}(\bar{M}_1, \bar{M}_2 ) \le a:=\max\left\{a_1, a_2\right\},
\ee
and the intrinsic flat and scalable intrinsic flat distances
between the settled completions are bounded,
\be\label{77}
d_{\mathcal{F}}(M'_1, M'_2) \le a\left(V_1+ V_2 + A_1+A_2\right),
\ee
\be\label{78}
d_{s\mathcal{F}}(M'_1, M'_2) \le
\left( a(V_1+ V_2)\right)^{1/(m+1)} +
\left( a (A_1+ A_2)\right)^{1/(m)}
\ee
where $V_i=\vol_m(M_i)$ and $A_i=\vol_{m-1}(\partial M_i)$.
\end{lem}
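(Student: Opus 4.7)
The plan is to apply Proposition~\ref{prop-hem} by explicitly constructing the common space $(M', g')$ with $M' = \bar M \times [t_1, t_2]$ and $t_2 - t_1 = a := \max\{a_1, a_2\}$, into which both $M_1$ and $M_2$ isometrically embed via the horizontal slice maps $\varphi_i(p) = (p, t_i)$, and then to read off the three distance bounds (\ref{sq-Z-GH}), (\ref{77}), (\ref{78}) from explicit volume computations in $M'$. I would set $t_1 = 0$, $t_2 = a$, and take $g' = dt^2 + G(t)$ for a smoothly varying family $G(t)$ of Riemannian metrics on $\bar M$ satisfying $G(0) = g_1$ and $G(a) = g_2$; a natural ansatz is the non-negative combination $G(t) = \alpha(t)\, g_1 + \beta(t)\, g_2$ with $\alpha(0) = \beta(a) = 1$ and $\alpha(a) = \beta(0) = 0$, which automatically gives the boundary condition (\ref{eq-prop-hem}).

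The key technical step is verifying the hemispherical inequality (\ref{ineq-prop-hem}), $G(t) \ge \cos^2((t-t_i)\pi/D_i)\, g_i$, for each $i = 1, 2$. Using the closeness hypothesis in the form $g_j \ge (1+\epsilon)^{-2} g_i$, the desired bound reduces to a scalar inequality among $\alpha(t)$, $\beta(t)$, and $c := (1+\epsilon)^{-2}$. At the endpoint $t = t_{3-i}$ it collapses to $(1+\epsilon)^{-2} \ge \cos^2(a\pi/D_i)$, which is precisely the quantitative content of the assumed lower bound $a > D_i \arccos((1+\epsilon)^{-1})/\pi$ encoded in the definitions of $a_1$ and $a_2$. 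The main obstacle will be maintaining the inequality throughout the interior of $[0, a]$: depending on the interpolation chosen, one may need to patch hemispherical arcs or use a two-stage construction in which $G(t)$ is held equal to $g_i$ on an initial segment and only interpolates in the middle, so that $G(t)$ dominates both required cosine profiles simultaneously.

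Once Proposition~\ref{prop-hem} furnishes the isometric embeddings $\varphi_i$, the Gromov-Hausdorff bound (\ref{sq-Z-GH}) is immediate: the vertical segment $\{p\} \times [0, a] \subset M'$ has length $a$, so $d_{M'}(\varphi_1(p), \varphi_2(p)) \le a$ for every $p \in \bar M$, giving $d_H^{M'}(\varphi_1(\bar M), \varphi_2(\bar M)) \le a$; this descends to the metric completions via the Cauchy sequence extension of Definition~\ref{metric-completion}.

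Finally, for the intrinsic flat and scalable intrinsic flat bounds, I would invoke Remarks~\ref{rmrk-Z-c} and~\ref{rmrk-Z-s} with filling submanifold $B = \bar M \times [t_1, t_2]$ (carrying the metric $g'$) and excess boundary $A = \partial M \times [t_1, t_2]$, so that the Stokes relation (\ref{Stokes}) is satisfied. Fiberwise, the bound $G(t) \le (1+\epsilon)^2 g_i$ gives $\vol_m(\bar M, G(t)) \le (1+\epsilon)^m V_i$, hence
\[
  \mass(B) \;=\; \int_0^a \vol_m(\bar M, G(t))\, dt \;\le\; a\,(V_1 + V_2)
\]
up to a factor converging to $1$ as $\epsilon \to 0$, and analogously $\mass(A) \le a\,(A_1 + A_2)$. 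Substituting into (\ref{est-int-flat}) and (\ref{est-s-int-flat}) yields (\ref{77}) and (\ref{78}), completing the proof.
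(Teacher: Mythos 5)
Your overall plan—apply Proposition~\ref{prop-hem} with $M' = \bar M \times [0,a]$, $\varphi_i(p)=(p,t_i)$, and then read off the three distance bounds from the filling $B=M'$ and excess boundary $A=\partial M\times[0,a]$—is the same as the paper's, and your GH argument (each vertical segment $\{p\}\times[0,a]$ has length $a$) is exactly the paper's. Two points, however, separate your writeup from a complete proof, and one of them changes the form of the final estimate.

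First, the volume estimate. You bound each fiber by $\vol_m(\bar M, G(t)) \le (1+\epsilon)^m V_i$ and then claim $\mass(B) \le a(V_1+V_2)$ ``up to a factor converging to $1$ as $\epsilon\to 0$.'' That factor does not disappear: with your interpolation $G(t)=\alpha(t)g_1+\beta(t)g_2$, the fiberwise bound you actually obtain is $a(1+\epsilon)^m\min(V_1,V_2)$, which is \emph{not} $\le a(V_1+V_2)$ for general $\epsilon$ (for $m$ moderately large, $(1+\epsilon)^m$ overwhelms the factor of $2$), so the stated inequalities (\ref{77})--(\ref{78}) as written in the lemma are not established. The paper avoids the $(1+\epsilon)^m$ factor entirely by constructing $g'$ so that it satisfies the auxiliary pointwise bound $g'(V,V)\le dt^2(V,V)+g_1(V,V)+g_2(V,V)$ (the inequality (\ref{star-1}) in the paper's proof), i.e.\ $G(t)\le g_1+g_2$ as quadratic forms for every $t$, and then reads off the slice-volume bound $\int_M \mu_{G(t)}\le V_1+V_2$ directly. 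Your interpolant does satisfy such a bound when $\alpha+\beta\le 1$, but you did not state or use it: the estimate you should extract from the construction is the quadratic-form bound $G(t)\le g_1+g_2$, not the ratio bound $G(t)\le (1+\epsilon)^2 g_i$.

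Second, you flag as ``the main obstacle'' that the hemispherical inequality (\ref{ineq-prop-hem}) must be maintained on the whole interval, and you propose either patching arcs or a two-stage construction without carrying this out. You are right that this is not automatic for an arbitrary interpolant, but the lemma requires it, and a complete proof must exhibit $G(t)$ (or at least an envelope) that satisfies both the lower bound $G(t)\ge\cos^2((t-t_i)\pi/D_i)g_i$ and the upper bound $G(t)\le g_1+g_2$ simultaneously with the correct endpoint values. In the paper this is asserted as a property of the chosen $g'$ rather than spelled out; in a self-contained argument you would want to verify it, for example by choosing $G(t)$ pointwise to be the minimal admissible form in simultaneously diagonalizing coordinates and checking both inequalities on eigenvalues. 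As written, your proposal identifies the right structure but leaves both the constraint verification and the clean volume estimate unresolved.
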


\begin{proof}
By our choice of $a_i$ we have
\be
g_1(V,V) >\cos^2(a_1\pi/diam(M_2)) g_2 (V,V) \qquad \forall V\in TM
\ee
and
\be
g_2(V,V) > \cos^2(a_2\pi/\diam(M_1)) g_1 (V,V) \qquad \forall V\in TM.
\ee
Applying Proposition~\ref{prop-hem} and setting $t_1=0$ and $t_2=a$,  we 
have isometric embeddings $\varphi_i: (M, g_i) \to (M',g')$
for any $g' $ satisfying (\ref{ineq-prop-hem}) and (\ref{eq-prop-hem}).  
In fact, the metric $g'$
on $M'$ can be chosen so that
\be \label{star-1}
g'(V,V) \le dt^2(V,V) +  g_1(V,V)+ g_2(V,V) \qquad \forall \, V \in TM'.
\ee

By Definition~\ref{defn-GH} we have,
\be
d_{GH}(\bar{M}_1, \bar{M}_2)
\le d^{M'}_H(\varphi_1(M_1), \varphi_2(M_2)).
\ee
For all $r>a$, $\varphi_1(M_1)\subset T_r(\varphi_2(M_2))$
and $\varphi_2(M_2)\subset T_r(\varphi_1(M_1))$,
because for all $p\in M$ we have
\be
d_{M'}(\varphi_1(p), \varphi_2(p)) = |t_2-t_1|=a.
\ee
By Definition~\ref{defn-H} we have (\ref{sq-Z-GH}).

Recall that to estimate the Intrinsic
flat Distance and scalable intrinsic flat distance
we must estimate volumes of a filling manifold and an 
excess boundary as
in (\ref{est-int-flat}) and (\ref{est-s-int-flat}).
Taking
$\nu$ to be the unit inward normal to $\partial M' \setminus (M_1\cup M_2)$
and applying the estimate on $g'$ given in (\ref{star-1}) we have
\begin{eqnarray*}
d_{\mathcal{F}}(M'_1, M'_2) &\le& \vol(M') + \vol (\partial M' \setminus (M_1\cup M_2))\\
&=& \int_{t_1}^{t_2}\int_M \mu_{g'}\,dt + \int_{t_1}^{t_2}\int_{\partial M} \nu \lrcorner \mu_{g'}\,dt\\
&\le & |t_2-t_1| (\vol(M_1)+ \vol(M_2)) + |t_2-t_1| (\vol(\partial M_1)+ \vol(\partial M_2)).
\end{eqnarray*}
and
\begin{eqnarray*}
d_{s\mathcal{F}}(M'_1, M'_2) &\le& \left(\vol(M')\right)^{1/(m+1)} +
\left(\vol (\partial M' \setminus (M_1\cup M_2))\right)^{1/m}\\
&=& \left(\int_{t_1}^{t_2}\int_M \mu_{g'}\,dt\right)^{1/(m+1)} +
\left(\int_{t_1}^{t_2}\int_{\partial M} \nu \lrcorner \mu_{g'}
\,dt\right)^{1/m}\\
&\le & \left(|t_2-t_1| ( \vol(M_1)+ \vol(M_2))\right)^{1/(m+1)}  \\
&&\qquad +
\left(|t_2-t_1| (\vol(\partial M_1)+ \vol(\partial M_2))\right)^{1/m}.
\end{eqnarray*}
\end{proof}

\subsection{Appending Regions without Smooth Approximations}

Now we examine pairs of precompact oriented
manifolds $(M_1, g_1)$ and
$(M_2, g_2)$ which are not diffeomorphic but have
diffeomorphic regions $U_i\subset M_i$.  That is, there
is a common smooth manifold with boundary $U$ and
diffeomorphisms $\psi_i: U \to U_i\subset M_i$.  Then we may
apply Proposition~\ref{prop-hem} and
Lemma~\ref{lem-squeeze-Z} to the regions $U_i$
to estimate the distances
between the metric and settled completions of the $M_i$.
Recall also Definitions~\ref{metric-completion}
and~\ref{defn-positive-density}.   Recall also the distinction
between the intrinsic length metric, $d_U$, and the 
restricted metric $d_M$, on a region $U\subset M$
and the corresponding diameters, $\diam_M(U)\le \diam_U(U)$,
in Definition~\ref{defn-d}.

\begin{thm} \label{thm-subdiffeo}
Suppose $M_1=(M,g_1)$ and $M_2=(M,g_2)$ are oriented
precompact Riemannian manifolds
with diffeomorphic subregions $U_i \subset M_i$ and
diffeomorphisms $\psi_i: U \to U_i$ such that
\be \label{thm-subdiffeo-1}
\psi_1^*g_1(V,V)
< (1+\epsilon)^2 \psi_2^*g_2(V,V) \qquad \forall \, V \in TU
\ee
and
\be \label{thm-subdiffeo-2}
\psi_2^*g_2(V,V) <
(1+\epsilon)^2 \psi_1^*g_1(V,V) \qquad \forall \, V \in TU.
\ee
Taking the extrinsic diameters,
\be \label{DU}
D_{U_i}= \sup\{\diam_{M_i}(W): \, W\textrm{ is a connected component of } U_i\} \le \diam(M_i),
\ee
we define a hemispherical width,
\be \label{thm-subdiffeo-3}
a>\frac{\arccos(1+\epsilon)^{-1} }{\pi}\max\{D_{U_1}, D_{U_2}\}.
\ee
Taking the difference in distances with respect to the outside manifolds,
\be \label{lambda}
\lambda=\sup_{x,y \in U}
|d_{M_1}(\psi_1(x),\psi_1(y))-d_{M_2}(\psi_2(x),\psi_2(y))|,
\ee
we define heights,
\be \label{thm-subdiffeo-4}
h =\sqrt{\lambda ( \max\{D_{U_1},D_{U_2}\} +\lambda/4 )\,}
\ee
and
\be \label{thm-subdiffeo-5}
\bar{h}= \max\{h,  \sqrt{\epsilon^2 + 2\epsilon} \; D_{U_1}, \sqrt{\epsilon^2 + 2\epsilon} \; D_{U_2} \}.
\ee
Then the Gromov-Hausdorff distance between the metric
completions is bounded,
\be \label{thm-subdiffeo-6}
d_{GH}(\bar{M}_1, \bar{M}_2 ) \le a + 2\bar{h} +
\max\left\{ d^{M_1}_H(U_1, M_1), d^{M_2}_H(U_2, M_2)\right\}
\ee
and the intrinsic flat distance between the
settled completions is bounded,
\begin{eqnarray*}
d_{\mathcal{F}}(M'_1, M'_2) &\le&
\left(2\bar{h} + a\right) \Big(
\vol_m(U_{1})+\vol_m(U_2)+\vol_{m-1}(\partial U_{1})+\vol_{m-1}(\partial U_{2})\Big)\\
&&+\vol_m(M_1\setminus U_1)+\vol_m(M_2\setminus U_2).
\end {eqnarray*}
and the scalable intrinsic flat distance is bounded,
\begin{eqnarray*}
d_{s\mathcal{F}}(M'_1, M'_2)
&\le&
\Big((\vol_m(U_{1}) +\vol_m(U_2))\left(\bar{h}+ a\right)
\Big)^{1/(m+1)}\\
&& + \,\,\Big(\, \big( 2\bar{h} + a\big)
(\vol_{m-1}(\partial U_{1})+\vol_{m-1}(\partial U_{2}))\\
&&\qquad+\vol_m(M_1\setminus U_1)+\vol_m(M_2\setminus U_2)
\,\Big)^{1/m}.
\end{eqnarray*}
\end{thm}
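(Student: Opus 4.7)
Following the strategy of Remarks~\ref{rmrk-Z-a}--\ref{rmrk-Z-s} and Lemma~\ref{lem-squeeze-Z}, I would construct an explicit common metric space
\be
Z = M_1 \disjointunion (U \times [0, a + 2\bar h]) \disjointunion M_2,
\ee
gluing $U \times \{0\}$ pointwise to $U_1 \subset M_1$ via $\psi_1$ and $U \times \{a + 2\bar h\}$ to $U_2 \subset M_2$ via $\psi_2$, and equip the slab with a carefully chosen Riemannian metric $g'$ so that each $\varphi_i : M_i \to Z$ becomes an isometric embedding of its metric (respectively settled) completion. The Gromov--Hausdorff, intrinsic flat, and scalable intrinsic flat bounds will then follow from the recipes in those remarks, with $B = U \times [0, a + 2\bar h]$ as the filling and $A = (M_1\setminus U_1)\cup(\partial U \times [0,a+2\bar h])\cup (M_2\setminus U_2)$ as the excess boundary.

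The heart of the argument is the construction of $g'$. On the core band $U \times [\bar h, \bar h + a]$ I would use a hemispherical interpolation as in Proposition~\ref{prop-hem}, with cosine factors based at the two ends and diameter parameters $D_{U_1}$ and $D_{U_2}$; hypothesis (\ref{thm-subdiffeo-3}) is exactly the condition $\cos(a\pi/D_{U_i})\le (1+\epsilon)^{-1}$ which, combined with (\ref{thm-subdiffeo-1})--(\ref{thm-subdiffeo-2}), forces the hemispherical domination inequality (\ref{ineq-prop-hem}) from both sides. On the two collars $U \times [0,\bar h]$ and $U \times [\bar h+a, a+2\bar h]$, $g'$ must reconcile $\psi_i^*g_i$ on the outer face with the metric appearing at the inner face of the core band. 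The first bound $\sqrt{\epsilon^2 + 2\epsilon}\, D_{U_i}$ on $\bar h$ in (\ref{thm-subdiffeo-5}) arises from requiring enough vertical room to taper $\psi_i^*g_i$ down to $(1+\epsilon)^{-1}\psi_i^*g_i$ by a cosine factor through the collar; the bound $h = \sqrt{\lambda(D + \lambda/4)}$ of (\ref{thm-subdiffeo-4}) is the leg of a right triangle with hypotenuse $D + \lambda/2$ and base $D$, which is precisely the collar height needed so that a slanted path in the slab can realize both $d_{M_1}$- and $d_{M_2}$-distances between corresponding boundary points, compensating for the discrepancy $\lambda$ of (\ref{lambda}).

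Given $g'$, verifying that $\varphi_i$ is an isometric embedding reduces to checking $d_Z \ge d_{M_i}$ on pairs of points lying in a single connected component of $U_i$: by the $D_{U_i}$-geodesic property from Proposition~\ref{prop-hem}, applied componentwise using the extrinsic diameter (\ref{DU}), any path through the slab between such points has length at least $d_{U_i}(\cdot,\cdot)\ge d_{M_i}(\cdot,\cdot)$, and the collars are designed so that the $\lambda$- and $\epsilon$-shortcut paths still have length at least $d_{M_i}$. The Hausdorff estimate (\ref{thm-subdiffeo-6}) then follows because any point of $U_i$ is connected to its image in $U_{3-i}$ by a vertical segment of length $a+2\bar h$, while points of $M_i\setminus U_i$ are within $d^{M_i}_H(U_i,M_i)$ of $U_i$. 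For the intrinsic flat and scalable flat bounds I apply Remarks~\ref{rmrk-Z-c} and~\ref{rmrk-Z-s}: a pointwise bound of the form $g' \le dt^2 + \psi_1^*g_1 + \psi_2^*g_2$ (arranged so that only the $i$-th summand is active near the face $t_i$) yields $\vol_{m+1}(B) \le (a+2\bar h)(\vol_m(U_1)+\vol_m(U_2))$ and $\vol_m(\partial U \times [0,a+2\bar h]) \le (a+2\bar h)(\vol_{m-1}(\partial U_1)+\vol_{m-1}(\partial U_2))$, and adding the excess $\vol_m(M_i\setminus U_i)$ produces the stated formulae; the scalable version comes from taking $(m+1)$-th and $m$-th roots.

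The principal obstacle is the explicit construction of $g'$ on the two collars so that all three conditions in (\ref{thm-subdiffeo-5}) simultaneously produce a smooth metric which (i) admits the hemispherical domination needed to apply Proposition~\ref{prop-hem}, (ii) allows the correct $d_{M_i}$-shortcut paths through the filling rather than only $d_{U_i}$-paths, and (iii) satisfies the pointwise upper bound driving the volume estimates. The right-triangle computation yielding the specific form of $h$ in (\ref{thm-subdiffeo-4}) will also require some care; once these pieces are in hand, the remaining steps are routine integrations in the $t$-variable.
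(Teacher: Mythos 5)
Your plan matches the paper's in its essentials: form $Z$ by gluing $M_1$ and $M_2$ to a slab $U\times[0,a+2\bar h]$ whose middle band of width $a$ is the hemispherical filling of Proposition~\ref{prop-hem} and Lemma~\ref{lem-squeeze-Z} (built componentwise over the connected components of $U$), verify $\varphi_i:M_i\to Z$ are isometric embeddings, and then read off the three bounds from Remarks~\ref{rmrk-Z-b}--\ref{rmrk-Z-s} and the volume of the slab. The slab volume estimate via $g'\le dt^2+\psi_1^*g_1+\psi_2^*g_2$ and the right-triangle derivation of $h=\sqrt{\lambda(D+\lambda/4)}$ are exactly the paper's.

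Where you diverge — and where your account would get you into trouble if followed literally — is in the collars and in where you locate the difficulty. In the paper the two collars $U_i\times[0,\bar h]$ carry the \emph{flat product} metric $dt^2+\psi_i^*g_i$, not a cosine-tapered one; by (\ref{eq-prop-hem}) the hemispherical bridge already matches $\psi_i^*g_i$ exactly at its two faces, so there is nothing for the collars to ``reconcile,'' and attempting to taper $\psi_i^*g_i$ down to $(1+\epsilon)^{-1}\psi_i^*g_i$ across the collar would create a mismatch with the bridge. Consequently the term $\sqrt{\epsilon^2+2\epsilon}\,D_{U_i}$ in (\ref{thm-subdiffeo-5}) is \emph{not} a taper-height requirement; it enters only in the path-length estimate when proving that a minimizing curve cannot profitably pass through $\varphi_2(M_2\setminus U_2)$: the Pythagorean bound $\sqrt{d_{U_2}(q_2,q_3)^2+\bar h^2}\ge\sqrt{d_{U_1}(q_2,q_3)^2/(1+\epsilon)^2+(\epsilon^2+2\epsilon)D_{U_1}^2}\ge d_{M_1}(q_2,q_3)$ needs exactly this value of $\bar h$ to absorb the $(1+\epsilon)^{-1}$ factor between the two metrics. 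Similarly, you should not require $g'$ to ``allow $d_{M_i}$-shortcut paths through the filling'': since $M_i$ is glued onto $Z$, the $d_{M_i}$-geodesics live in $\varphi_i(M_i)$ itself, and the slab only needs to preserve intrinsic $d_{U_i}$-distances (Proposition~\ref{prop-hem}) while being ``thick enough'' that detours through $\varphi_2(M_2)$ are never shorter. The genuinely delicate step is thus not constructing $g'$ (that part is routine once you drop the taper) but the case analysis in the isometric-embedding verification, splitting a candidate shortcut at the points $t_1<t_2\le t_3<t_4$ where it enters and leaves $\varphi_2(M_2)$ and combining the Pythagorean bound with the definition of $\lambda$; that is where all three terms in $\bar h$ and the definition of $h$ are used.
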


Figure~\ref{fig-defn-GH} may be viewed as an application of this theorem.
It should be noted that this theorem is an improvement on the
Bridge Method Lemma A.2 of \cite{SorWen2} in two respects.  First,
we allow $U_1$ and $U_2$ not isometric, and secondly we
loosen the diameter bounds of that method asking only for
control on the $\lambda$ defined here.

Recall in Definition~\ref{defn-d}, that two different metrics
are defined on a connected subdomain, $U\subset M$.  When
$U$ is also totally convex, these two metrics agree.   
Theorem~\ref{thm-subdiffeo} does not require the subdomains to
be connected or convex, and so the proof becomes quite 
difficult.  Before we prove this theorem we state and prove a special
case with stronger estimates.

\begin{thm} \label{thm-convex}
Suppose $M_1=(M,g_1)$ and $M_2=(M,g_2)$ are oriented Riemannian manifolds
with diffeomorphic totally convex subregions $U_i \subset M_i$ and
diffeomorphisms $\psi_i: U \to U_i$ such that
\be
\psi_1^*g_1(V,V)
< (1+\epsilon)^2 \psi_2^*g_2(V,V) \qquad \forall \, V \in TU
\ee
and
\be
\psi_2^*g_2(V,V) <
(1+\epsilon)^2 \psi_1^*g_1(V,V) \qquad \forall \, V \in TU.
\ee
Then for any
\be
a_1> \frac{\arccos(1+\epsilon)^{-1} }{\pi}\diam_{U_2}(U_2)
\ee
and
\be
a_2> \frac{\arccos(1+\epsilon)^{-1}}{\pi} \diam_{U_1}(U_1),
\ee
there is a pair of isometric embeddings $\varphi_i: U_i \to M'$
where $M'=U\times [t_1, t_2]$ where $t_2-t_1=\max\left\{a_1, a_2\right\}$
such that $\varphi_i(x)=(x,t_i)$.  Furthermore, these
isometric embeddings extend to isometric embeddings
$\varphi: M_i \to Z'$, where $Z'$ is a length metric space
defined by gluing $M_i$ to $M'$ along $U_i$.

In particular the Gromov-Hausdorff distance
between the metric completions is bounded,
\be
d_{GH}(\bar{M}_1, \bar{M}_2 ) \le \max\left\{a_1, a_2\right\} +
\max\left\{ d^{M_1}_H(U_1, M_1), d^{M_2}_H(U_2, M_2)\right\}
\ee
and the intrinsic flat distance
between the settled completions is bounded,
\begin{eqnarray*}
d_{\mathcal{F}}(M'_1, M'_2) &\le& \max\left\{a_1, a_2\right\}
\Big(\vol(U_1)+ \vol(U_2) + \vol(\partial U_1)+\vol(\partial U_2)\Big) \\
&&+ \vol(M_1\setminus U_1)
+ \vol(M_2 \setminus U_2),
\end {eqnarray*}
and the scalable intrinsic flat distance is bounded,
$$
d_{s\mathcal{F}}(M'_1, M'_2) \,\,\le\,\,
\Big(\,\max\left\{a_1, a_2\right\} (\vol(U_1)+ \vol(U_2))\,\Big)^{1/(m+1)}  \qquad\qquad\qquad\qquad
$$
$$
\qquad +\,\,\Big(\,\max\left\{a_1, a_2\right\} (\vol(\partial U_1)+ \vol(\partial U_2))
+ \vol(M_1\setminus U_1) + \vol(M_2\setminus U_2) \,\Big)^{1/(m)} .
$$
\end{thm}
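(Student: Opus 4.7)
The plan is to build an isometric bridge between $U_1$ and $U_2$ using the hemispherical construction of Proposition~\ref{prop-hem}, glue $M_1$ and $M_2$ to this bridge along their common subregion, and then read off the three distance estimates from the resulting explicit model space, exactly along the lines of Lemma~\ref{lem-squeeze-Z} and Remarks~\ref{rmrk-Z-a}--\ref{rmrk-Z-s}.

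First I would apply Proposition~\ref{prop-hem} with $U$ playing the role of $M$ and the two pulled-back metrics $\psi_1^*g_1$, $\psi_2^*g_2$ playing the roles of $g_1, g_2$. Because $U_i$ is totally convex in $M_i$, the intrinsic length metric on $U_i$ coincides with the restricted metric $d_{M_i}|_{U_i}$, so $D_i := \diam_{U_i}(U_i)$ is also the diameter of $(U,\psi_i^*g_i)$. Set $t_2 - t_1 = \max\{a_1, a_2\}$. The hypotheses on $a_1, a_2$ combined with the two-sided bi-Lipschitz bound $(1+\epsilon)^{-1}\psi_i^*g_i < \psi_j^*g_j < (1+\epsilon)\psi_i^*g_i$ say precisely that
\[
\cos^2\bigl(|t_2-t_1|\pi/D_j\bigr)\,\psi_j^*g_j \;\le\; \psi_i^*g_i,
\]
which lets me interpolate a smooth metric $g'$ on $M' = U \times [t_1,t_2]$ satisfying both copies of (\ref{ineq-prop-hem}) and (\ref{eq-prop-hem}), together with the envelope bound $g' \le dt^2 + \psi_1^*g_1 + \psi_2^*g_2$ of (\ref{star-1}). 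Proposition~\ref{prop-hem} then supplies the required isometric embeddings $\varphi_i : U_i \to M'$ by $x \mapsto (x,t_i)$.

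Next I would assemble $Z' = M_1 \sqcup M' \sqcup M_2$ with the identifications $\psi_i(x) \sim (x, t_i)$, equipped with the induced length metric, and extend $\varphi_i$ to an isometric embedding $\iota_i : M_i \hookrightarrow Z'$. The point requiring verification is that the length metric on $Z'$ restricts to $d_{M_i}$ on $\iota_i(M_i)$. For any candidate shortcut running from $x,y\in M_i$ through the bridge $M'$, the path must enter $U_i$ at some $p$ and leave at some $q$; the bridge segment has length $\ge d_{(U,\psi_i^*g_i)}(p,q)$ by (\ref{extra-edges}), and total convexity upgrades this to $d_{M_i}(p,q)$. Concatenating with the portions inside $M_i$ produces a path of length $\ge d_{M_i}(x,y)$, so no improvement is possible. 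Once $\iota_i$ are isometric, the Gromov--Hausdorff bound follows from the triangle inequality: any $p \in \iota_1(M_1)$ is within $d_H^{M_1}(U_1,M_1)$ of some $\varphi_1(q) \in \varphi_1(U_1)$, and $\varphi_1(q)$ is distance $\max\{a_1,a_2\}$ from $\varphi_2(\psi_2\psi_1^{-1}(q)) \in \iota_2(M_2)$.

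For the intrinsic flat and scalable intrinsic flat estimates I would use the filling current $B = \llbracket M', g'\rrbracket$ (with a consistent orientation so that Stokes' formula (\ref{Stokes}) holds) and the excess boundary
\[
A \;=\; \iota_1\llbracket M_1\setminus U_1\rrbracket \;-\; \iota_2\llbracket M_2\setminus U_2\rrbracket \;+\; \llbracket \partial U\times[t_1,t_2], g'\rrbracket.
\]
The envelope bound (\ref{star-1}) with Fubini yields $\mass(B) \le \max\{a_1,a_2\}(\vol U_1 + \vol U_2)$ and $\vol_m(\partial U\times[t_1,t_2],g') \le \max\{a_1,a_2\}(\vol \partial U_1 + \vol \partial U_2)$, and plugging into (\ref{est-int-flat}) and (\ref{est-s-int-flat}) gives the two stated inequalities.

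The main obstacle is the gluing step: making sure that the length metric on $Z'$ does not offer a shortcut across the bridge compared with the original $d_{M_i}$. Total convexity is essential here and is used twice, first to identify $D_i$ with the actual diameter of $(U, \psi_i^*g_i)$ so that Proposition~\ref{prop-hem} applies, and second to replace the bridge-inherited estimate $d_{(U,\psi_i^*g_i)}(p,q)$ by $d_{M_i}(p,q)$ at the entry and exit points. In the absence of convexity (the setting of Theorem~\ref{thm-subdiffeo}) these distances differ by up to $\lambda$, which is precisely why the extra hemispherical heights $h$ and $\bar h$ must be inserted to preserve the isometric embedding property.
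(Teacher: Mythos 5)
Your overall construction --- hemispherical bridge via Proposition~\ref{prop-hem}, gluing to form $Z'$, and reading off the three distance bounds from the volumes of $M'$, $\partial U\times[t_1,t_2]$, and $M_i\setminus U_i$ --- is the same as the paper's, and the volume estimates at the end are correct.

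The gap is in the verification that $\iota_1\colon M_1\hookrightarrow Z'$ is an isometric embedding. You consider a $Z'$-minimizing path from $\varphi_1(x)$ to $\varphi_1(y)$ that enters the bridge at $\varphi_1(p)$ and leaves at $\varphi_1(q)$, and assert that the bridge segment has length $\ge d_{(U,\psi_1^*g_1)}(p,q)$ ``by (\ref{extra-edges}).'' But (\ref{extra-edges}) bounds the $M'$-distance between a point on the $\varphi_1$ side and a point on the $\varphi_2$ side; it says nothing about a segment whose two endpoints both lie in $\varphi_1(U_1)$. More seriously, your sketch never addresses the case where the $Z'$-minimizer actually crosses into $\varphi_2(M_2\setminus U_2)$ and comes back. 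In that case the bridge segment contains a middle piece lying inside $\varphi_2(M_2)$, whose length is controlled only by $d_{M_2}$, not by $d_{M_1}$; since $g_1$ and $g_2$ differ by a factor of up to $(1+\epsilon)$ this does not close the estimate. That is precisely where total convexity must be invoked, and in a way your sketch does not: a $Z'$-minimizing curve entering $\varphi_2(M_2\setminus U_2)$ restricts, on $\varphi_2(M_2)$, to an $M_2$-minimizing geodesic with endpoints in $\bar{U}_2$ that leaves $\bar{U}_2$, contradicting total convexity of $U_2$. Once crossings are excluded, the remaining case (a segment staying in $M'$) is handled by the isometric-embedding conclusion of Proposition~\ref{prop-hem}, not by (\ref{extra-edges}). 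The two uses of convexity you list (identifying $D_i$ with $\diam(U,\psi_i^*g_i)$, which holds by definition of $\diam_{U_i}$ with no convexity needed, and converting $d_{U_i}$ to $d_{M_i}$) are not where the hypothesis does its real work.
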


\begin{proof}
The metric $g'$ on $M'$ is defined by applying 
Proposition~\ref{prop-hem}
and Lemma~\ref{lem-squeeze-Z} to the diffeomorphic regions,
$U_1$ and $U_2$; taking $D_i = \diam_{U_i}(U_i)$ as defined above, 
$\varphi_i: U_i \to M'$  are isometric embeddings.   
We can choose $g'$ satisfying
(\ref{star-1}).

We must verify that the $M_i$ isometrically embed into $Z'$
constructed as in the statement of the theorem.  To see this we
take any $x,y\in M_1$ and a shortest curve $C \subset Z'$ running between
$\varphi_1(x)$ and $\varphi_1(y)$.
If the curve never enters $\varphi_2(M_2\setminus U_2)$ then
$d_{M_1}(x,y)=d_{Z'}(\varphi_1(x),\varphi_1(y))$ by
Lemma~\ref{lem-squeeze-Z} and Lemma A.1 in the Appendix
of \cite{SorWen2} applied to $\varphi_1(M_1) \cup M' \subset Z'$.  If
the curve does enter $\varphi_2(M_2\setminus U_2)$ then
we have a length minimizing curve which leaves $\bar{U}_2$
contradicting the fact that it is convex.   The same argument may
be repeated to prove $\varphi_2: M_2 \to Z'$ is an isometric
embedding.

So now we may estimate the Gromov-Hausdorff distance
as in Remark~\ref{rmrk-Z-b}.
Let $r_i = d^{M_i}_H(M_i, U_i)$.  We claim
\be
d_{GH}(M_1, M_2) \le
d^{Z'}_H\big(\varphi_1(M_1), \varphi_2(M_2)\big) \le |t_1-t_2| + \max\left\{r_i\right\}.
\ee
Fix any $\delta>0$.
Then any point $p\in M_1$
has a point $q \in U_1$ such that $d(p,q)\le r_1 +\delta$.
Furthermore, $\varphi_1(q)=(q, t_1) \in M'\subset Z'$ so
\be
d_{Z'}(\varphi_1(q), (q,t_2) ) = |t_2-t_1|
\ee
and $(q,t_2)\subset \varphi_2(U_2) \subset \varphi_2(M_2)$.
Thus
\be
\varphi_1(M_1) \subset T_{r_1+\delta+|t_2-t_1|}(\varphi(M_2) )
\ee
and similarly
\be
\varphi_2(M_2) \subset T_{r_2+\delta+|t_2-t_1|}(\varphi(M_1) ).
\ee
The claim follows by taking $\delta\to 0$.

We bound the intrinsic flat distance as in Remark~\ref{rmrk-Z-c}
taking $M'$ to be the filling
manifold with the metric $g'$ defined in Lemma~\ref{lem-squeeze-Z}
satisfying (\ref{star-1}).   
We apply the same estimates as in
Lemma~\ref{lem-squeeze-Z} to bound the volumes of these
regions, only now we add in the additional volume terms 
coming from the additional components of the excess boundary
$M_i\setminus U_i$.   

We bound the scalable intrinsic flat distance 
as in Remark~\ref{rmrk-Z-s}.  Again we include
the additional components of the excess boundary but insert them
into the summand with an exponent of $1/m$ since these are
$m$ dimensional boundary regions and the scalable flat distance
is 1 dimensional.
\end{proof}

We now prove Theorem~\ref{thm-subdiffeo}.  To prove this
theorem we adapt the proof of the convex case and the proof
of Lemma A.2 in \cite{SorWen2}.   It is essential to possibly
push the two manifolds further apart than required simply to isometrically
embed the $U_i$ into $M'$ as a short cut for a path between
points in $\varphi_1(M_1)$ might be found within
$\varphi_2(M_2\setminus U_2)$.

\begin{proof}
For each corresponding pair of connected components $U_{\alpha,i}$
of $U_i$, we create a hemispherically defined filling bridge
$M'_\alpha$ diffeomorphic to $U_{\alpha_i,i}\times [0,a]$ with metric $g'_\alpha$ satisfying (\ref{star-0})
by applying Proposition~\ref{prop-hem} and Lemma~\ref{lem-squeeze-Z} 
using the $a_i=a_i(\alpha)$ defined there for that particular connected
component, $U_\alpha$ and $D_i=D_{U_i}$.  Observe that all $a_i \le a$,
so we may take $|t_1-t_2|=a$ for all the connected
components.   Any minimal geodesic $\gamma: [0,1]\to U_{\alpha,i}$
of length $\le D_{U_i}\le \diam_{M_i}(U_i)$ satisfies (\ref{star-0}).

We take the
disjoint unions of these bridges to be $M'$.  So it has a metric
$g'$ satisfying (\ref{star-1}).   Observe that
the boundary of $M'$ is $(U,g_1) \cup (U, g_2) \cup (\partial U \times [0 , a] , g')$.  So that
\begin{eqnarray}\label{sdB}
\vol_m(M')&=&\sum_\alpha \vol_m(M_\alpha')\\
& \le& \sum_\alpha  a(\vol_m(U_{\alpha,1})+\vol_m(U_{\alpha,2}))\\
& \le& a (\vol_m(U_{1})+\vol_m(U_{2}))
\end{eqnarray}
and
\be\label{sdA}
\vol_{m}\left(\partial M'\setminus (\varphi_1(U_1)\cup
\varphi_2(U_2)\right) 
\le a \; (\vol_{m-1}(\partial U_1) + \vol_{m-1}(\partial U_2))
\ee
as in Lemma~\ref{lem-squeeze-Z}.

We cannot directly glue $M_i$ to $M'$ and obtain an isometric embedding
because our regions are not convex.  
On either end of the filling bridges, we glue isometric products
$U_{\alpha} \times [0, \bar{h}]$ with metric $dt^2 +g_i$,
so that all the bridges are extended
by an equal length on either side.  This creates a Lipschitz manifold,
\be
M''=(U_1\times[0,\bar{h}])
\disjointunion_{U_1} M' \disjointunion_{U_2} (U_2 \times [0,\bar{h}]).
\ee
We then define $\varphi_i: U_i \to M''$ such that
\begin{eqnarray}
\varphi_1(x)&=&(x,0)\in U_1\times [0, \bar{h}]\\
\varphi_2(x)&=&(x,\bar{h})\in U_2\times [0, \bar{h}]
\end{eqnarray}
as in Figure~\ref{fig-subdiffeo-1}.
Then by (\ref{sdB}) and (\ref{sdA}), we have
\begin{eqnarray}\label{subdiffeo-B}
\vol_{m+1}(M'')&=& \vol_{m+1}(M') + \bar{h} (\vol_m(U_1) +\vol_m(U_2))\\
& \le& (a+ \bar{h}) (\vol_m(U_{1})+\vol_m(U_{2}))
\end{eqnarray}
and $\vol_{m}\left(\partial M''\setminus (\varphi_1(U_1)\cup
\varphi_2(U_2)\right)=$
\begin{eqnarray} \label{subdiffeo-A}
\qquad
&=&
\vol_{m}\left(\partial M'\setminus (\varphi_1(U_1)\cup
\varphi_2(U_2)\right)+
 \bar{h} (\vol_{m-1}(\partial U_1) +\vol_{m-1}(\partial U_2))\\
 &\le& (a+\bar{h}) (\vol_{m-1}(\partial U_1) + \vol_{m-1}(\partial U_2)).
\end{eqnarray}

Finally we glue $M_1$ and $M_2$
to the far ends of $M''$ along $\varphi_i(U_i)$
to create a connected length space
\be
Z = \bar{M}_1 \disjointunion_{U_1} M'' \disjointunion_{U_2} \bar{M}_2
\ee
where distances in $Z$ are defined by taking the infimum of
lengths of curves as usual.   See Figure~\ref{fig-subdiffeo-1}.
We will refer to each connected component, $M''_\alpha$ of $M''$
as the filling bridge corresponding to $U_\alpha$.

\begin{figure}[h] 
   \centering
   \includegraphics[width=5in]{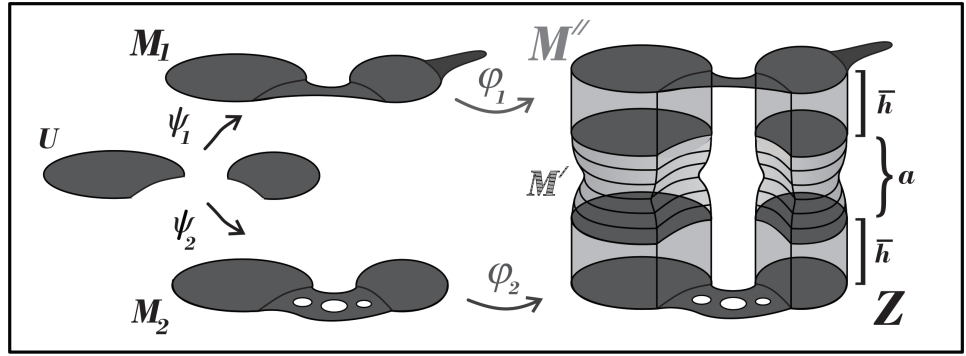}
   \caption{Creating $Z$ for Theorem~\ref{thm-subdiffeo}.}
   \label{fig-subdiffeo-1}
\end{figure}

We must prove that $\varphi_1: M_1 \to Z$ mapping $M_1$ into
its copy in $Z$ is an isometric embedding.  To see this we
take any $x,y\in M_1$ and a shortest curve $C\subset Z$ running between
$\varphi_1(x)$ and $\varphi_1(y)$.  As in the convex proof, our
only concern is the possibility that
$C$ passes into $\varphi_2(M_2\setminus U_2)$.

If the minimizing curve never crosses a filling bridge, then we
claim it has the same length as
a curve in $\varphi_1(M_1)$.   To see this, we take any
$s_2>s_1\in [0,1]$ such that $C(s_1), C(s_2)\in \varphi_1(M_1)$
and $C((s_1,s_2)) \subset Z\setminus \varphi_1(M_1)$.
Since $C$ is assumed not to cross a bridge (not to enter
$\varphi_2(M_2)$, then 
$C(s_1)=\varphi(x_1)$ and  $C(s_2)=\varphi(x_2)$ 
where $x_1,x_2$ lie in the same connected component, $U_{\alpha,1}$,
of $U_1$.   Since  $C([s_1,s_2])$ is a minimizing curve it has  length 
\be
\le d_{M''}(C(s_1), C(s_2))\le d_{M_1}(x_1,x_2) 
\le \diam_{M_1}(U_{\alpha,1})\le D_{U_1}.
\ee
By (\ref{star-0}), a minimal geodesic from $x_1$ to $x_2$
lying in $U_{\alpha,1}$ has the same length as $C([s_1,s_2])$.
So we may replace this segment of $C$ with the image
of this minimal geodesic.

On the other hand, if the minimizing curve crosses a 
filling bridge all the way to $\varphi_2(M_2)$, then we may carefully apply
the choice of $\bar{h}$ to reach a contradiction as in the left hand side of
Figure~\ref{fig-subdiffeo-2}.   We define the following
points $0\le t_1 < t_2 \le t_3 < t_4 \le 1$ such that
\begin{eqnarray}
t_1&=&\inf\{ t: \,\, C(t) \in Cl(Z \setminus \varphi_1(M_1))\},\\
t_2&=&\min\{ t: \,\, C(t) \in \varphi_2(M_2)\},\\
t_4&=&\min\{ t>t_2: \,\, C(t) \in \varphi_1(M_1)\}, \\
t_3&=&\max\{ t \in [t_2, t_4): \,\, C(t) \in \varphi_2(M_2)\}
\end{eqnarray}
so that $C([t_1, t_2])$ and $C([t_3, t_4])$ are geodesic
segments lying within filling bridges:
\be
C([t_1,t_2])\subset M''_{\alpha_{1,2}} \qquad
C([t_3,t_4]) \subset M''_{\alpha_{3,4}}.
\ee
\begin{figure}[h] 
   \centering
   \includegraphics[width=5in]{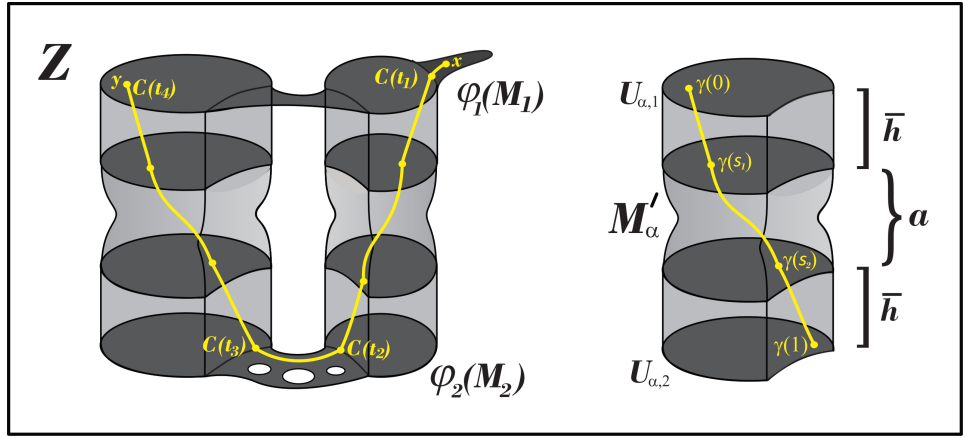}
   \caption{Why length minimizing curves cannot cross bridges
   in the proof of Theorem~\ref{thm-subdiffeo}.}
   \label{fig-subdiffeo-2}
\end{figure}

Observe that there are points $p_1, p_4\in U$
and $p_2, p_3 \in \partial(U)$ such that
\begin{eqnarray}
\varphi_1(\psi_1(p_1))=C(t_1)&&\varphi_1(\psi_1(p_4))=C(t_4)\\
\varphi_2(\psi_2(p_2))=C(t_2)&&\varphi_2(\psi_2(p_3))=C(t_3).
\end{eqnarray}
Observe that since $C([t_2,t_3])\subset \varphi_2(M_2)$
we know the length of this segment is
\be\label{rough-2}
d_{Z}(C(t_2), C(t_3)) =d_{M_2}(\psi_2(p_2),\psi_2(p_3)) \ge d_{M_1}(\psi_1(p_2),\psi_1(p_3))-\lambda
\ee
by the definition of $\lambda$ in (\ref{lambda}).

We claim that the lengths of the other segments are
\be \label{will-prove-1}
d_{M''}(C(t_1),C(t_2)) > \sqrt{ d_{M_1}(p_1,p_2)^2 + h^2\,}
\ee
and
\be \label{will-prove-2}
d_{M''}(C(t_3),C(t_4)) > \sqrt{d_{M_1}(p_3,p_4)^2 + h^2\,}.
\ee
Once we prove this claim, we see
that by the definition of $h$
we have
\begin{eqnarray}
d_{M''}(C(t_1),C(t_2)) &>& d_{M_1}(p_1,p_2) + \lambda/2,\\
d_{M''}(C(t_3),C(t_4)) &>& d_{M_1}(p_3,p_4) + \lambda/2.
\end{eqnarray}
This combined with (\ref{rough-2}) implies that
\begin{eqnarray*}
L(C([t_1,t_4])&=& d_{M''}(C(t_1), C(t_2))+
d_{Z}(C(t_2), C(t_3))+ d_{M''}(C(t_3), C(t_4))\\
&>& d_{M_1}(p_1,p_2) + \lambda/2+ d_{M_1}(p_2, p_3)-\lambda+
d_{M_1}(p_3,p_4) + \lambda/2\\
&\ge& d_{M_1}(p_1,p_2) + d_{M_1}(p_2, p_3) + 
d_{M_1}(p_3,p_4) \\
&\ge & d_{M_1}(p_1, p_4) =d_{\varphi_1(M_1)}(C(t_1), C(t_4))\\
&\ge& d_{Z}(C(t_1), C(t_4)),
\end{eqnarray*}
which is a contradicts the fact that $C$ was minimizing.

So we need only prove our claim
in (\ref{will-prove-1}) and (\ref{will-prove-2}) to see that
$\varphi_1: M_1 \to Z$ is an isometric embedding.  This claim concerns
a minimizing geodesic lying in a single connected component
of the filling bridges,
\be
\gamma:[0,1]\to (U_{\alpha,1}\times[0, \bar{h}])
\disjointunion_{U_{\alpha,1}}
M_\alpha' \disjointunion_{U_{\alpha,2}}
(U_{\alpha,2}\times[0, \bar{h}])
\ee
such that
\be
\gamma(0)=(q_0,0)\in U_{\alpha,1}\times \{0\}\subset \varphi_1(M_1)
\ee
and
\be
\gamma(1)=(q_3,\bar{h})\in U_{\alpha,2}\times \{\bar{h}\}\subset \varphi_2(M_2).
\ee
Consult the right hand side of Figure~\ref{fig-subdiffeo-2}.
Let $0<s_1<s_2<1$ be chosen so that
\be
\gamma(s_1)=(q_1, \bar{h})\in U_{\alpha,1}\times \{\bar{h}\}\subset \partial M_\alpha'
\ee
and
\be
\gamma(s_2)=(q_2, 0)\in U_{\alpha,2}\times \{0\}\subset \partial M_\alpha'.
\ee
Then by (\ref{extra-edges}), we have
\be
L_{g'}(\gamma([s_1,s_2]) = d_{g'}(\gamma(s_1), \gamma(s_2)) \ge d_{g_1}(q_1,q_2),
\ee
so that
\begin{eqnarray*}
L(\gamma)&=& d_{Z}(\gamma(0), \gamma(s_1))+
d_Z(\gamma(s_1), \gamma(s_2)) + d_Z(\gamma(s_2), \gamma(1))\\
&=& \sqrt{d_{U_1}(q_0,q_1)^2 + \bar{h}^2} +
d_{U_1}(q_1,q_2)
+ \sqrt{d_{U_2}(q_2,q_3)^2 + \bar{h}^2} \\
&\ge& \sqrt{d_{U_1}(q_0,q_1)^2 + h^2} +
d_{U_1}(q_1,q_2)\\
&&+ \qquad \sqrt{d_{U_1}(q_2,q_3)^2/(1+\epsilon)^2 + (\epsilon^2 + 2 \epsilon) D_{U_1}^2} \\
&\ge& \sqrt{d_{M_1}(q_0,q_1)^2 + h^2} +
d_{M_1}(q_1,q_2)\\
&&+ \qquad \sqrt{d_{M_1}(q_2,q_3)^2/(1+\epsilon)^2 + (\epsilon^2 + 2 \epsilon)
d_{M_1}(q_3,q_4)^2} \\
&\ge& \sqrt{d_{M_1}(q_0,q_1)^2 + h^2} +
d_{M_1}(q_1,q_2)
+ \sqrt{d_{M_1}(q_2,q_3)^2} \\
& > & \sqrt{(d_{M_1}(q_0,q_1)+d_{M_1}(q_1,q_2) + d_{M_1}(q_2, q_3))^2 + h^2}\\
&\ge & \sqrt{ d_{M_1}(q_0,q_3)^2 + h^2}.
\end{eqnarray*}
This gives us (\ref{will-prove-1}) and (\ref{will-prove-2}).
Thus we have proven $\varphi_1: M_1 \to Z$ is an isometric
embedding and the same follows for $\varphi_2:M_2 \to Z$.

So now we may estimate the Gromov-Hausdorff distance:
Let $r_i = d^{M_i}_H(M_i, U_i)$.  We claim
\be
d_{GH}(M_1, M_2) \le
d^{Z}_H(\varphi_1(M_1), \varphi_2(M_2) \le
\bar{h}+\bar{h}+ a + \max\left\{r_i\right\}.
\ee
Fix any $\delta>0$.
Then any point $p\in M_1$
has a point $q \in U_1$ such that $d(p,q)\le r_1 +\delta$.
Furthermore,
\be
d_{Z}(\varphi_1(q), \varphi_2(q) ) \le a + \bar{h}+\bar{h}
\ee
and $\varphi_2(q)\subset \varphi_2(U_2) \subset \varphi_2(M_2)$.
Thus
\be
\varphi_1(M_1) \subset T_{r_1+\delta+a+\bar{h} + \bar{h}}(\varphi(M_2) ).
\ee
and similarly
\be
\varphi_2(M_2) \subset T_{r_2+\delta+a+\bar{h} + \bar{h}}(\varphi(M_1) ).
\ee
The claim follows by taking $\delta\to 0$.

To bound the intrinsic flat distance and scalable intrinsic flat
distance, we take $B^{m+1}=M''$ to be the filling manifold and
then the excess boundary is
\be
A^m=\varphi_1(M_1\setminus U_1) \cup\varphi_2(M_2\setminus U_2) \cup
\partial M'' \setminus (\varphi_1(U_1)\cup \varphi_2(U_2))
\ee
so that with appropriate orientations we have
\be
\int_{\varphi_1(M_1)}\omega-\int_{\varphi_2(M_2)}\omega
= \int_{B^{m+1}} d\omega + \int_{A^m} \omega.
\ee
The volumes of these manifolds have been computed in
(\ref{subdiffeo-A}) and (\ref{subdiffeo-B}).  So as in
Remark~\ref{rmrk-Z-c} we have
\begin{eqnarray*}
d_{\mathcal{F}}(M_1, M_2) &\le&
\vol_m(U_{1})\left( \bar{h} + a\right) +\vol_m(U_2)\left( \bar{h}+ a\right)\\
&& + \left( \bar{h} + a\right)\vol_{m-1}(\partial U_{1})+
\left( \bar{h}+ a\right)\vol_{m-1}(\partial U_{2})\\
&&+\vol_m(M_1\setminus U_1)+\vol_m(M_2\setminus U_2).
\end {eqnarray*}
The scalable intrinsic flat distance is bounded
as in Remark~\ref{rmrk-Z-s} so that we have
\begin{eqnarray*}
d_{s\mathcal{F}}(M_1, M_2)
&\le&
\Big(\vol_m(U_{1})\left( \bar{h}+ a\right) +\vol_m(U_2)\left(\bar{h} + a\right)
\Big)^{1/(m+1)}\\
&& + \Big( \left( \bar{h} + a\right)\vol_{m-1}(\partial U_{1})+
\left( \bar{h} + a\right)\vol_{m-1}(\partial U_{2})\\
&&+\vol_m(M_1\setminus U_1)+\vol_m(M_2\setminus U_2)\Big)^{1/m}.
\end{eqnarray*}
\end{proof}

\section{Intrinsic Flat Limits}\label{sect-IF}

In this section we examine sequences of Riemannian
manifolds which converge smoothly away from singular
sets and their intrinsic flat limits proving Theorem~\ref{codim-thm}. 
This theorem will be shown to be consequences of the following more
powerful theorem which requires a condition on the embeddings
of the exhaustion in the manifold:

\begin{defn} \label{well-embedded}
Given a sequence of Riemannian manifolds
$M_i=(M,g_i)$ and an open subset, $U\subset M$,
a connected precompact exhaustion, $W_j$, of $U$ 
satisfying (\ref{defn-precompact-exhaustion})
is
{\em uniformly well embedded} if  \footnote{\textcolor{blue}{The limits have been reordered to match what we need to prove Theorem 5.2 with its original proof.}}
\be\label{lambda-ijk-5000}
\lambda_{i,j,k}= \sup_{x,y\in W_j} |d_{(W_{k}, g_i)}(x,y)- d_{(M,g_i)}(x,y)|   
\ee   
has
\textcolor{blue}{
\be\label{lambda-ijk-2-2020}
\limsup_{j\to \infty}\limsup_{k\to \infty} \limsup_{i \to \infty} \lambda_{i,j,k}=0.
\ee
and thus a uniform upper bound
\be \label{lambda-ijk-00-2020}
 \lambda_{i,j,k}
\le \lambda_0 < \infty
\ee
} 
\end{defn}

\begin{thm}\label{flat-to-settled}
Let $M_i=(M,g_i)$ be a sequence of compact oriented Riemannian manifolds
such that
there is a closed subset, $S$, and a 
uniformly well embedded connected precompact exhaustion,
$W_j$, of $M\setminus S$ satisfying (\ref{defn-precompact-exhaustion})
such that $g_i$ converge smoothly to $g_\infty$ on each $W_j$
with
\be\label{diam-3}
\diam_{M_i}(W_j) \le D_0 \qquad \forall i\ge j, 
\ee
\be \label{area-3}
\vol_{g_i}(\partial W_j) \le A_0
\ee
and
\be \label{not-vol-3}
\vol_{g_i}(M\setminus W_j) \le V_j \textrm{ where } \lim_{j\to\infty}V_j=0.
\ee
Then
\be
\lim_{j\to \infty} d_{\mathcal{F}}(M_j', N')=0
\ee
where $N'$ is the settled completion
of $N=(M\setminus S, g_\infty)$.   \footnote{\textcolor{blue}{With the correction to Definition 5.1 this theorem is
correct as originally stated but the proof is fixed within.}}
\end{thm}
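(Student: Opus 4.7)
The strategy is to apply Theorem~\ref{thm-subdiffeo} directly with $M_1 = M_i$, $M_2 = N'$, and common subdomain $U = W_j$, taking $\psi_1, \psi_2$ to be the natural inclusions of $W_j$ into $M_i$ and into $N'$ respectively. Although Theorem~\ref{thm-subdiffeo} is formally stated for two Riemannian metrics on a single compact manifold, its proof only uses the existence of diffeomorphic subregions together with the hemispherical bridge construction from Proposition~\ref{prop-hem}, so it adapts immediately to the present case where $N'$ is a settled completion (and in particular an integral current space rather than a smooth manifold). Once this setup is in place, the task reduces to showing that every summand in the conclusion of Theorem~\ref{thm-subdiffeo} can be made arbitrarily small by taking $j$, then $i$, large.

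I would first bound the parameters appearing in Theorem~\ref{thm-subdiffeo}. The bi-Lipschitz constant $\epsilon$ in (\ref{thm-subdiffeo-1})--(\ref{thm-subdiffeo-2}) may be taken to tend to $0$ as $i \to \infty$ with $j$ fixed, since $g_i \to g_\infty$ smoothly on $W_j$. The extrinsic diameters $D_{U_1}, D_{U_2}$ are both bounded by $D_0$ via (\ref{diam-3}) (the second by passing the bound through the smooth limit). The boundary area satisfies $\vol_{m-1}(\partial W_j) \le A_0$ by (\ref{area-3}), and the excess volumes $\vol_m(M_i \setminus W_j)$ and $\vol_m(N' \setminus W_j)$ are both bounded by $V_j \to 0$ by (\ref{not-vol-3}) and the smooth convergence.

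The essential step is estimating the distance-difference quantity $\lambda$ of (\ref{lambda}), which in our setting reads $\lambda = \sup_{x,y \in W_j} |d_{M_i}(x,y) - d_{N'}(x,y)|$. For any $x,y \in W_j$ and $k \ge j$, I would insert intermediate distances to obtain
\begin{align*}
|d_{M_i}(x,y) - d_{N'}(x,y)| &\le |d_{M_i}(x,y) - d_{(W_k, g_i)}(x,y)|\\
&\quad + |d_{(W_k, g_i)}(x,y) - d_{(W_k, g_\infty)}(x,y)|\\
&\quad + |d_{(W_k, g_\infty)}(x,y) - d_{N'}(x,y)|.
\end{align*}
The first term is bounded by $\lambda_{i,j,k}$ by Definition~\ref{well-embedded}; the second tends to $0$ as $i \to \infty$ with $k$ fixed, by smooth convergence of $g_i$ to $g_\infty$ on the compact set $W_k$; and the third tends to $0$ as $k \to \infty$ with $j$ fixed, since $W_k$ exhausts $M \setminus S$ and any $g_\infty$-almost-minimizing path between points of $W_j$ can be approximated by a path inside $W_k$. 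Given $\varepsilon > 0$, I would then choose first $j$ large so that $\lambda_j < \varepsilon$ and $V_j < \varepsilon$, next $i$ large so that $\lambda_{i,j}$ is within $\varepsilon$ of $\lambda_j$ and the bi-Lipschitz parameter $\epsilon$ is small, and finally $k$ large so that $\lambda_{i,j,k}$ is within $\varepsilon$ of $\lambda_{i,j}$ and both the second and third terms above are small.

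Combining these estimates, Theorem~\ref{thm-subdiffeo} yields a bound for $d_{\mathcal{F}}(M_i', N')$ of the shape $(2\bar{h} + a)(\vol_m(W_j)\cdot 2 + 2A_0) + 2V_j$, where the hemispherical width $a$ shrinks with the bi-Lipschitz parameter, and the height $h$ of (\ref{thm-subdiffeo-4}), hence $\bar h$, shrinks like $\sqrt{\lambda\, D_0}$; together with the bound on $V_j$, sending $i \to \infty$ and then $j \to \infty$ yields $\lim_{j\to\infty} d_{\mathcal{F}}(M_j', N') = 0$. The main obstacle is the careful bookkeeping of the nested $\limsup$s in Definition~\ref{well-embedded}, in particular the need to choose the exhaustion index $k$ after $i$ in order to force both $\bar h$ and $a$ to zero; packaging this diagonal passage to the limit is precisely the role of the auxiliary Proposition~\ref{flat-exhaustion}, which I would invoke to close the chain of estimates cleanly.
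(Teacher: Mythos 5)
The broad arsenal you deploy — Theorem~\ref{thm-subdiffeo} for the hemispherical bridge, the $\lambda_{i,j,k}$ structure of Definition~\ref{well-embedded}, and Proposition~\ref{flat-exhaustion} — is the right one, but the way you assemble it differs from the paper and has two real gaps.

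First, you propose to apply Theorem~\ref{thm-subdiffeo} directly with $M_2 = N'$, acknowledging that $N'$ is an integral current space rather than a compact Riemannian manifold and asserting this "adapts immediately." That assertion is doing a lot of work: the proof of Theorem~\ref{thm-subdiffeo} builds a glued length space $Z = \bar{M}_1 \sqcup_{U_1} M'' \sqcup_{U_2} \bar{M}_2$ and proves the maps $\varphi_i$ are isometric embeddings by analyzing shortest curves that may pass through $\varphi_2(M_2 \setminus U_2)$; with $M_2 = N'$ this requires reproving the embedding lemma for a settled completion that may not be a manifold (and whose restricted and length metrics can genuinely disagree). The paper sidesteps this entirely: it applies Theorem~\ref{thm-subdiffeo} only between two bona fide precompact manifolds, $M_1 = (W_k, g_\infty)$ and $M_2 = (M, g_i)$ with $U_1 = U_2 = W_j$, and then obtains $d_{\mathcal{F}}((W_k,g_\infty), N') \to 0$ \emph{separately} from Proposition~\ref{flat-exhaustion}, closing with the triangle inequality for $d_{\mathcal{F}}$. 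Proposition~\ref{flat-exhaustion} is thus not a device for "packaging the diagonal passage to the limit" in your $\lambda$-estimate; it is a standalone convergence statement for the fixed-metric exhaustion $(W_k, g_\infty) \Fto N'$, proved (again via Theorem~\ref{thm-subdiffeo}, but with $\epsilon = 0$ and $a = 0$) before the sequence $g_i$ ever enters.

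Second, your triangle-inequality estimate for $\lambda = \sup_{x,y\in W_j}|d_{M_i}(x,y) - d_{N'}(x,y)|$ has an ordering conflict you do not resolve. You split it into three pieces, the middle one being $|d_{(W_k,g_i)}(x,y) - d_{(W_k,g_\infty)}(x,y)|$, which is controlled by smooth convergence of $g_i$ on the \emph{whole} of $W_k$ and thus requires $i$ to be chosen large \emph{after} $k$ is fixed. But your stated order of choices — $j$, then $i$, then $k$ — picks $k$ last; the $i$ you fixed need not be large enough for that $k$. You flag this as "the main obstacle" but offer only the vague remark that Proposition~\ref{flat-exhaustion} closes the gap, which it does not do in the sense you describe. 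The paper's decomposition dissolves this tension precisely because the $(W_k,g_\infty)$-vs-$N'$ term is $i$-independent and the $(W_k,g_\infty)$-vs-$(M,g_i)$ term is handled by taking $\limsup_{k}$ first, then $\limsup_i$, then $j \to \infty$, consistent with the nested structure in Definition~\ref{well-embedded}. If you want your direct route to survive, you need to either reorder to $j$, $k$, $i$ and then control the first term by monotonicity of $\lambda_{i,j,k}$ in $k$ together with (\ref{lambda-ijk-2}), or adopt the paper's two-step decomposition.
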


In the first subsection, we prove a technical proposition
demonstrating that the intrinsic flat limit of a connected precompact exhaustion
of an open set in a fixed Riemannian manifold is the metric
completion of that open set [Proposition~\ref{flat-exhaustion}].  
This theorem is shown to be false for Gromov-Hausdorff limits 
[Example~\ref{ex-spiralling}].

The second subsection, we complete the proof of 
Theorem~\ref{flat-to-settled} applying Proposition~\ref{flat-exhaustion}
and Theorem~\ref{thm-subdiffeo}.  

The third subsection contains a proof of
Lemma~\ref{codim-2-lambda}
concerning manifolds with singular sets of codimension two. 
This final lemma combined with Theorem~\ref{flat-to-settled} proves
Theorem~\ref{codim-thm}.  \textcolor{blue}{This third subsection is now changed
and will be replaced by work in the appendix.}

\begin{rmrk} \label{flat-necessity}
In Example~\ref{ex-not-connected} we see that it
is necessary to assume that the exhaustion is
connected in Theorem~\ref{flat-to-settled}.  
The excess volume bound in (\ref{not-vol-2}) is shown to
be necessary in Example~\ref{ex-to-hemisphere} and
Example~\ref{ex-cap-cyl}, which has no intrinsic flat limit.
The uniform bound on the boundary volumes, (\ref{area-2}),
is seen to be necessary in Example~\ref{ex-flamenco}.
All these examples have codimension 2 singular
sets and show the necessity of these hypothesis
for Theorem~\ref{codim-thm} as well.
The uniform embeddedness hypothesis of Theorem~\ref{flat-to-settled}
and the codimension two condition of Theorem~\ref{codim-thm}
are seen to be necessary for their respective theorems
in Example~\ref{ex-to-torus-square}.
\end{rmrk}

\subsection{Creating Spaces from Exhaustions}

In this section we examine the construction of the
limit space from a sequence of precompact open sets.
One may view this section as a technical subsection.
Recall that a connected precompact exhaustion of a domain
satisfies (\ref{defn-precompact-exhaustion}).

\begin{prop}\label{flat-exhaustion}
Let $W_j$ be a connected precompact exhaustion of a Riemannian
manifold, $N$, with fixed Riemannian metric, $g_N$.
If we assume that $\diam(N) \le D_0$,
$\vol(W_j) \le \vol(N) \le V_0$ and
$\vol(\partial W_j) \le A_0$ 
then the settled completion
$N' \subset \bar{N}$  satisfies
\be
\lim_{j\to\infty} d_{\mathcal{F}}\big( (W'_j, d_{W'_j}), ({N}', d_{{N}'}) \big) =0,
\ee
where $d_{W_j}$ is the induced length metric on $W_j$
defined by the Riemannian metric $g_N$ and $W_j'$ is the settled
completion of $W_j$ with respect to $d_{W_j}$. \footnote{\textcolor{blue}{This proposition is fine
because all distances are measured using lengths with respect to the same $g_N$.  A few
typos in the proof are fixed.   It now works with new reordering of limits
in $\lambda_{i,j,k}$.}}
\end{prop}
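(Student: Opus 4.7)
The plan is to bound $d_{\mathcal{F}}(W_j', N')$ by applying Theorem~\ref{thm-subdiffeo} with $M_1 = W_j'$ and $M_2 = N'$, taking the common diffeomorphic subregion to be $U = W_j$ included into both spaces by the identity. Since the Riemannian tensor on $U$ is $g_N|_{W_j}$ in each case, the biLipschitz parameter $\epsilon$ of Theorem~\ref{thm-subdiffeo} may be taken to be $0$, so the hemispherical width $a$ may be chosen arbitrarily small.

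The easy ingredients are: $\vol_m(M_1 \setminus U_1) = \vol(W_j' \setminus W_j) = 0$, since $W_j' \setminus W_j$ consists only of measure-zero completion points; $\vol_m(M_2 \setminus U_2) = \vol(N \setminus W_j) \to 0$ by monotone convergence, using $\vol(N) \le V_0$ and $W_j \nearrow N$; and $\vol_{m-1}(\partial U_i) = \vol(\partial W_j) \le A_0$ is given. The component extrinsic diameters $D_{U_i}$ of~(\ref{DU}) are bounded by $\diam(N) \le D_0$ on the $N'$ side, and by $\diam(W_j') \le D_0 + \lambda_j$ on the $W_j'$ side.

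The quantity that drives the filling volume is
\begin{equation}
\lambda_j \,=\, \sup_{x,y \in W_j}\bigl|d_{W_j}(x,y) - d_N(x,y)\bigr|,
\end{equation}
which determines $h_j = \sqrt{\lambda_j(D_0 + \tfrac{5}{4}\lambda_j)}$ and, through Theorem~\ref{thm-subdiffeo}, yields an estimate of the form
\begin{equation}
d_{\mathcal{F}}(W_j', N') \,\le\, (2h_j + a)\bigl(2V_0 + 2A_0\bigr) + \vol(N \setminus W_j).
\end{equation}
If $\lambda_j \to 0$ this vanishes as $j \to \infty$. Pointwise, $d_{W_j}(x,y) \searrow d_N(x,y)$ by the nested exhaustion, and an Arzel\`a--Ascoli argument applied to near-minimizing paths in $N$ of length $\le D_0$, whose traces are compact and hence eventually contained in some $W_k$, should promote this to uniform convergence on compact subsets.

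The main obstacle is precisely this uniformity: the supremum in $\lambda_j$ ranges over the \emph{shifting} domain $W_j \times W_j$, and if $(x,y)$ is allowed to drift toward the ``new'' pieces of $W_j$ as $j$ grows the Arzel\`a--Ascoli compactness breaks. If uniformity genuinely fails (as Example~\ref{ex-spiralling} suggests for the Gromov--Hausdorff analogue), the intended fallback is to replace $W_j$ in the application of Theorem~\ref{thm-subdiffeo} by a slightly smaller subregion $U_j \subset W_j$ on which the corresponding supremum of $|d_{W_j} - d_N|$ is small, absorbing the complement $W_j \setminus U_j$ into the excess boundary term $\vol_m(M_1 \setminus U_1)$. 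The hypotheses $\vol(N) \le V_0$ and $\vol(\partial W_j) \le A_0$, together with a sub-level-set construction on $x \mapsto \sup_{y \in W_j}|d_{W_j}(x,y) - d_N(x,y)|$, should produce such $U_j$ with $\vol(W_j \setminus U_j) \to 0$, and hence $d_{\mathcal{F}}(W_j', N') \to 0$.
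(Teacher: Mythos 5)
Your overall strategy --- apply Theorem~\ref{thm-subdiffeo} with $\epsilon=0$ and $a$ arbitrarily small to $W_j'$ and $N'$ --- is the same as the paper's, and you correctly identify the central obstacle: the supremum
\be
\lambda_j=\sup_{x,y\in W_j}\bigl|d_{W_j}(x,y)-d_N(x,y)\bigr|
\ee
ranges over the shifting domain $W_j\times W_j$, so the Dini-type compactness argument breaks and $\lambda_j$ need not tend to $0$ (Example~\ref{ex-spiralling} realizes exactly this failure). Where the proposal comes apart is in the fallback. A sub-level-set of $x\mapsto\sup_{y\in W_j}|d_{W_j}(x,y)-d_N(x,y)|$ does not produce a usable subregion: first, with $y$ still ranging over all of $W_j$, every $x$ can ``see'' a far point $y$ near the new portions of $W_j$, so this function may be uniformly large and the sub-level set empty; second, even if defined with both $x,y$ in the candidate region (which makes the construction self-referential), there is no control on $\vol(\partial U_j)$, which enters the Theorem~\ref{thm-subdiffeo} estimate through the excess-boundary term; and third, the claim that $\vol(W_j\setminus U_j)\to 0$ is asserted, not proved. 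These gaps are not cosmetic --- they are the substance of the proposition.

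The paper's fix is simpler and does exactly what your second idea is groping toward, but with the right choice of subregion: take the common diffeomorphic subregion $U$ to be a \emph{fixed earlier member} $W_i$ of the given exhaustion, and apply Theorem~\ref{thm-subdiffeo} to $M_1=W_k$ and $M_2=N$ with $U_1=U_2=W_i$ for $i<k$. This is a three-index argument. Because both endpoints of the distance discrepancy now lie in the fixed compact set $\bar W_i$, the quantity
\be
\lambda_{i,k}=\sup_{x,y\in W_i}\bigl|d_{W_k}(x,y)-d_N(x,y)\bigr|
\ee
is monotone decreasing in $k$ and tends to $0$ for each fixed $i$ by the Arzel\`a--Ascoli argument you sketch (the compactness now survives). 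Meanwhile the excess boundary volumes $\vol(W_k\setminus W_i)+\vol(N\setminus W_i)\le 2\vol(N\setminus W_i)$ and the boundary area $\vol(\partial W_i)\le A_0$ are controlled by hypothesis and monotone convergence. One then chooses $i$ large to make $\vol(N\setminus W_i)$ small and sends $k\to\infty$ to kill $\lambda_{i,k}$; the order of limits matters and is the essence of the proof. To repair your write-up, replace the ad hoc sub-level-set construction with $U_j=W_{i(j)}$ for a sufficiently slowly increasing $i(j)$, prove $\lambda_{i,k}\to 0$ for fixed $i$ via the monotonicity and compactness argument (and, importantly, note that $\vol(N\setminus W_i)\to 0$ follows from $\vol(N)\le V_0$ and $W_j\nearrow N$, which you already observed), and carry the two-limit bookkeeping through the Theorem~\ref{thm-subdiffeo} estimate.
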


The connectedness is essential to this theorem as can be seen
in Example~\ref{ex-not-connected}.
Interestingly, one does not obtain Gromov-Hausdorff
convergence under these conditions.  There need not even 
exist a Gromov-Hausdorff limit of $(\bar{W}_j, d_{W_j})$.
See Example~\ref{ex-spiralling} below.   

\begin{proof}
We first verify that we can apply Theorem~\ref{thm-subdiffeo} with $M_1=W_{k}$ and $M_2={N}$ and $U_1=W_i \subset W_{k}$ for $i<k$ and
$U_2=W_i \subset {N}$.   Note that $\epsilon=0$ and the
hemispherical width $a$ can be taken to be $0$ because
$U_i$ have the same Riemannian metric, $g_N$.

We claim
\be \label{edge-volume}
\lim_{j\to\infty} \vol(N\setminus W_j) =0.
\ee
Since $N$ is an open manifold of finite volume
\be
\vol(N) =\sum_{k=1}^\infty \vol(W_k \setminus W_{k-1}),
\ee
so 
\be
\lim_{j\to \infty} \sum_{k=j+1}^\infty \vol(W_k \setminus W_{k-1})=0.
\ee
However
\be
\vol(N\setminus W_j) =\sum_{k=j+1}^\infty \vol(W_k \setminus W_{k-1})
\ee
so we have our claim.   

Let $k>i$ and let
\be\label{lambda-ik-1}
\lambda_{i,k}= \sup_{x,y\in W_i} |d_{W_{k}}(x,y)- d_{{N}}(x,y) |.
\ee
Then
\be \label{DU1}
D_{U_1}=\diam_{W_k}(W_i)\le \diam_{{N}}{W_i} +\lambda_{i,k} \le
D_0+\lambda_{i,k}.
\ee
and,
\be \label{DU2}
D_{U_2}=\diam_{{N}}{W_i}  \le D_0.
\ee

We claim that for fixed $i$,
\be \label{lim-lambda-1}
\lim_{k\to \infty} \lambda_{i,k} =0.
\ee
First note that $\lambda_{i,k}$ is decreasing in $k$ because
\be
d_{W_{k}}(x,y)\ge d_{W_{k+1}}(x,y) \ge d_{{N}}(x,y).
\ee
If the limit is not zero in (\ref{lim-lambda-1}) then let
\be\label{eps-prime}
\epsilon'=\inf_k \lambda_{i,k}>0.
\ee
Since $\bar{W}_i$ is compact, there exists $x_{i,k}, y_{i,k} \subset \bar{W}_i$
achieving the supremum in (\ref{lambda-ik-1}).   Taking $k$ to
infinity, a subsequence converges to $x_i, y_i\subset \bar{W}_i$
with respect to $d_{\bar{W}_i}$.
Let $\gamma_i \subset N$ be a curve from $x_i$ to $y_i$
such that
\be
L(\gamma_i) \le d_{N}(x_i,y_i) +\epsilon'/5.
\ee
Since $W_k$ exhaust $N$, there exists $N_{\epsilon'}$ sufficiently
large that
\be
\gamma_i \subset W_k \qquad \forall k\ge N_{\epsilon'}.
\ee
Thus
\be
d_{W_k}(x_i,y_i) \le d_{N}(x_i,y_i) +\epsilon'/5
\qquad \forall k\ge N_{\epsilon'}.
\ee
Now take $k$ from the subsequence sufficiently large
that we have
\begin{eqnarray}
d_{\bar{W}_k}(x_{i,k},x_i) &\le& d_{\bar{W}_i}(x_{i,k},x_i) <\epsilon'/5\\
d_{\bar{W}_k}(y_{i,k},y_i) &\le& d_{\bar{W}_i}(y_{i,k},y_i) <\epsilon'/5.
\end{eqnarray}
Thus
\begin{eqnarray*} 
d_{W_k}(x_{i,k},y_{i,k}) 
&\le & d_{W_k}(x_{i_k}, x_i) + d_{W_k}(x_i,y_i) + d_{W_k}(y_i, y_{i_k})\\
&< & d_N(x_i, y_i) + 3\epsilon'/5\\
&\le & d_{N}(x_{i_k}, x_i) + d_{N}(x_i,y_i) + d_{N}(y_i, y_{i_k})+
3\epsilon'/5\\
&\le & d_{W_k}(x_{i_k}, x_i) + d_{N}(x_i,y_i) + d_{W_k}(y_i, y_{i_k})+
3\epsilon'/5\\
&\le & d_{W_k}(x_{i_k}, x_i) + d_{N}(x_i,y_i) + d_{W_k}(y_i, y_{i_k})+
3\epsilon'/5\\
&<&\textcolor{blue}{d_{N}(x_i,y_i)}+ 5\epsilon'/5=\textcolor{blue}{d_{N}(x_i,y_i)}+\epsilon'.
\end{eqnarray*}
Since $d_N(x_{i,k},y_{i,k}) \le d_{W_k}(x_{i,k},y_{i,k})$, we have
\be
 \lambda_{i,k} < d_{W_k}(x_{i,k},y_{i,k})-d_N(x_{i,k}, y_{i,k}) \textcolor{blue}{\,\,<\,\,}
 \epsilon'
 \ee
  which contradicts (\ref{eps-prime}).

By (\ref{DU1}), (\ref{DU2}) and (\ref{lim-lambda-1}), we know that
for fixed $i$,
\be
\lim_{k\to \infty} \bar{h}_{i,k} =0
\ee
where $\bar{h}_{i,k}$ is defined as in
(\ref{thm-subdiffeo-4})-(\ref{thm-subdiffeo-5}) with $\lambda=\lambda_{i,j}$
and $\epsilon=0$.

By Theorem~\ref{thm-subdiffeo}, 
the intrinsic flat distance is bounded,
\begin{eqnarray*}
d_{\mathcal{F}}(N', W_j) &\le&
\left(\bar{h}_{i,j}\right) \left(
2\vol(W_i)+2 \vol(\partial W_i) \right)
+\vol(N\setminus W_i)+\vol(W_j\setminus W_i)\\
&\le&\left(\bar{h}_{i,j}\right) \left(2 V_0+2A_0 \right)
+\vol(N\setminus W_i)+\vol(N\setminus W_i)
\end {eqnarray*}
By (\ref{edge-volume}), for any $\epsilon">0$ there
exists $i$ sufficiently large that
\be
d_{\mathcal{F}}(N', W_j) \le \left(\bar{h}_{i,j}\right) \left(2 V_0+2A_0 \right)
+ \epsilon".
\ee
Fixing that value for $i$, we now take $j \to \infty$,
\be
\lim_{j\to \infty} d_{\mathcal{F}}(N', W_j) <\epsilon".
\ee
We have the theorem as stated.
\end{proof}

\begin{example}\label{ex-spiralling}
In Figure~\ref{fig-spiralling} we see that a connected precompact exhaustion
$W_j$ of a standard flat two dimensional
torus satisfying $\vol(W_j) \le \vol(N) \le V_0$ and
\be 
\lim_{j\to\infty} \vol(N\setminus W_j) =0.
\ee
Observe that $(\bar{W}_j, d_{W_j})$
need not have a Gromov-Hausdorff limit because balls of radius $1/2$
about the tips of the arms measured with respect to the intrinsic
length metric $d_{W_j}$ are disjoint and so the number of disjoint 
balls of radius $1/2$ is unbounded.  According to the
converse of Gromov Compactness
Theorem, the number of disjoint balls in a sequence of
compact metric spaces converging to a compact metric space
is uniformly bounded above, so this
sequence cannot converge \cite{Gromov-metric}.

\begin{figure}[h] 
   \centering
   \includegraphics[width=5in]{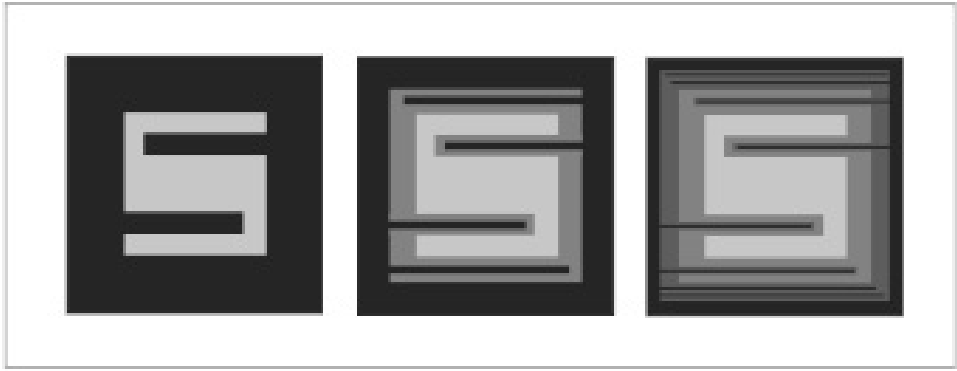}
   \caption{$W_1$ with two arms is depicted in white on a black $T^2$.
   $W_2$ with four arms is depicted in light grey containing $W_1$.  
   $W_3$ with eight
   arms is depicted in darker grey containing $W_2$ and $W_1$.}
   \label{fig-spiralling}
\end{figure}
To find an example which also satisfies $\vol(\partial W_j) \le A_0$,
we may construct a connected precompact exhaustion of a standard
flat three dimensional torus where the arms are thin tubular neighborhoods
of curves so that their lengths are still long enough to have disjoint
balls but the areas of the boundaries of the arms are arbitrarily small.
\end{example}

\subsection{Proof of Theorem~\ref{flat-to-settled}}

In this subsection we prove Theorem~\ref{flat-to-settled}.
Keep in mind Remark~\ref{flat-necessity}.   First we
prove a short lemma which will be applied here and
elsewhere:

\begin{lem} \label{lem-vol-vol}
Let $M_i=(M,g_i)$ be a sequence of compact Riemannian manifolds
such that
there is a closed subset, $S$, and a connected precompact exhaustion,
$W_j$, of $M\setminus S$ satisfying (\ref{defn-precompact-exhaustion})
such that $g_i$ converge smoothly to $g_\infty$ on each $W_j$.
If 
\be \label{lem-edge-volume}
\vol_{g_i}(M\setminus W_j) \le V_j \textrm{ where } \lim_{j\to\infty}V_j=0
\ee
then there exists a uniform $V_0>0$ such that
\be \label{lem-vol}
\vol_{g_i}(M) < V_0.
\ee
\end{lem}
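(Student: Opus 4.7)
The plan is to fix a single index $j_0$ large enough that the excess volume bound $V_{j_0}$ is small (say $V_{j_0}\le 1$), and then use the smooth convergence $g_i\to g_\infty$ on the precompactly contained region $W_{j_0}$ to bound $\vol_{g_i}(W_{j_0})$ uniformly in $i$. Adding the two pieces gives a uniform bound on $\vol_{g_i}(M)$, at least for all sufficiently large $i$; the finitely many remaining $i$ are then absorbed into the constant.

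More concretely, I would first choose $j_0$ so that $V_{j_0}\le 1$, which is possible since $V_j\to 0$. By the definition of a connected precompact exhaustion, $\bar{W}_{j_0}\subset W_{j_0+1}$, so $g_\infty$ is smooth on a neighborhood of the compact set $\bar{W}_{j_0}$ and thus $\vol_{g_\infty}(W_{j_0})$ is finite. Since $g_i$ converges $C^{k,\alpha}$ to $g_\infty$ on $W_{j_0+1}\supset \bar{W}_{j_0}$, the corresponding volume densities converge uniformly on $\bar{W}_{j_0}$, and therefore
\begin{equation}
\lim_{i\to\infty}\vol_{g_i}(W_{j_0})=\vol_{g_\infty}(W_{j_0}).
\end{equation}
In particular there is $i_0$ so that $\vol_{g_i}(W_{j_0})\le \vol_{g_\infty}(W_{j_0})+1$ for all $i\ge i_0$. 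Combined with the hypothesis $\vol_{g_i}(M\setminus W_{j_0})\le V_{j_0}\le 1$, this gives $\vol_{g_i}(M)\le \vol_{g_\infty}(W_{j_0})+2$ for all $i\ge i_0$.

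Finally, for each of the finitely many indices $i<i_0$, $M_i$ is a compact Riemannian manifold and hence has finite volume. Setting
\begin{equation}
V_0=\max\bigl\{\vol_{g_\infty}(W_{j_0})+2,\ \max_{i<i_0}\vol_{g_i}(M)\bigr\}+1
\end{equation}
yields a uniform bound $\vol_{g_i}(M)<V_0$ valid for every $i$, which is the desired conclusion. There is no real obstacle here: the only point to be careful about is justifying that smooth convergence on $W_{j_0+1}$ implies convergence of volumes on the compact subset $\bar{W}_{j_0}$, which follows immediately from uniform convergence of the volume forms on $\bar{W}_{j_0}$.
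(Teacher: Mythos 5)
Your proof is correct and takes essentially the same approach as the paper: fix a single $W_{j_0}$, use smooth convergence to bound $\vol_{g_i}(W_{j_0})$ uniformly in $i$, and add the hypothesized bound on $\vol_{g_i}(M\setminus W_{j_0})$. (The only cosmetic differences are that the paper fixes an arbitrary $j$ rather than one with $V_{j_0}\le 1$, and it absorbs the finitely many small $i$ implicitly by noting a convergent sequence is bounded.)
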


\begin{proof}
Fix any $W_j$.   Since $g_i$ converges smoothly on $W_j$,
$\vol_{g_i}(W_j)$ must converge smoothly as well.  So there
exists $V_1>0$ such that $\vol_{g_i}(W_j) \le V_1$.   Thus we
have
\be
\vol_{g_i}(M) =\vol_{g_i}(W_j) + \vol_{g_i}(M\setminus W_j)\le V_1+V_j
\ee
and $\sup V_j < \infty$ because $\lim_{j\to\infty} V_j$ exists.
\end{proof}

We now prove \sout{Proposition~\ref{flat-exhaustion}}
\textcolor{blue}{Theorem~\ref{flat-to-settled}}:

\begin{proof}
By hypothesis (\ref{not-vol-3}) and Lemma~\ref{lem-vol-vol} we have: 
\be \label{vol-3}
\vol(M_i) \le V_0,
\ee
Next we prove that $(W_j, g_\infty)$ satisfy the hypothesis of
\textcolor{blue}{Proposition~\ref{flat-exhaustion}}.   Observe that
hypothesis \textcolor{blue}{ (\ref{not-vol-3}) and smooth convergence we have}
\be
\vol_{g_\infty}(W_j)=\lim_{i\to\infty} \vol_{g_i}(W_j) \le V_0,
\ee
while (\ref{area-3}) implies
\be
\vol_{g_\infty}(\partial W_j) = \lim_{i\to\infty}
\vol_{g_i}(\partial W_j) \le A_0.
\ee
Finally 
\begin{eqnarray}
\diam_N(N) &=& \lim_{j\to \infty} \diam_N(W_j) \\
&\le& \lim_{j\to \infty} \lim_{k\to \infty} \diam_{(W_k, g_\infty)}(W_j) \\
&\le& \lim_{j\to \infty} \lim_{k\to \infty} \lim_{i\to\infty} \diam_{(W_k,g_i)}(W_j) \\
&\le& \lim_{j\to \infty} \lim_{k\to \infty} \lim_{i\to\infty} \diam_{(M,g_i)}(W_j)
+\lambda_{i,j,k} \\
&\le& \limsup_{j\to \infty} \limsup_{k\to \infty} \limsup_{i\to\infty} D_0+\lambda_{i,j,k} \\
&\le & D_0 +\lambda_0.   
\end{eqnarray}

Thus by \textcolor{blue}{Proposition~\ref{flat-exhaustion}}
\sout{Theorem~\ref{flat-to-settled}} we have
\be \label{settled-8}
d_{\mathcal{F}}\big((W_j,g_\infty), (N',d_\infty)\big)=F_j
\textrm{ where } \lim_{j\to \infty} F_j=0.
\ee

Next we will apply Theorem~\ref{thm-subdiffeo} to
show $M_1=(W_k, g_\infty)$ and $M_2=(M, g_i)$
are close in the intrinsic flat sense by setting
$U_1=W_j\subset W_k$ and $U_2=W_j \subset M$
for some well chosen $j<k$ 
Then
the values in the hypothesis of the theorem are
\begin{eqnarray}
\epsilon &=& \epsilon_{i,j} \textrm{ where } \lim_{i\to \infty} \epsilon_{i,j}=0,\\
D_{U_2}&\le& \diam_{(M,g_i)}(W_j) \le D_0\\
D_{U_1}&\le& \diam_{(W_k,g_i)}(W_j) \le D_0+\lambda_0\\
a&=&a_{i,j}\le a_{i,j}=2(D_0+\lambda_0)\arccos(1+\epsilon_{i,j})^{-1}/\pi \\
\lambda&=&\textcolor{blue}{\lambda'_{i,j,k} \textrm{ instead of } \lambda_{i,j,k}}  \label{lambda-prime}\\
h&=&h_{i,j,k}\le \sqrt{ \lambda'_{i,j,k}(D_0+\lambda_0+\lambda'_{i,j,k}/4)\,}\\
\bar{h}&=&\bar{h}_{i,j,k} \le \max\{h_{i,j,k},
\sqrt{\epsilon_{i,j}^2 + 2\epsilon_{i,j}}
(D_0+\lambda_0)\}
\end{eqnarray}
Thus
\be \label{Brian-Allen-concern}
d_{\mathcal{F}}\big((W_k,g_\infty),(M,g_i)\big)
\le
\left(\bar{h}_{i,j,k} + a_{i,j}\right) \Big(2 V_0+ 2 A_0\Big)+ 2V_j.
\ee
\textcolor{blue}{Brian Allen observed the above estimate was incorrect in the published version
because in (\ref{lambda-prime}) we had
$$
\lambda_{i,j,k}= \sup_{x,y\in W_j} |d_{(W_{k}, g_i)}(x,y)- d_{(M,g_i)}(x,y)|   
\textrm{ as in (\ref{lambda-ijk-5000})}
$$ 
but to apply Theorem~\ref{thm-subdiffeo} we need
$$
\lambda'_{i,j,k}=\sup_{x,y\in W_j} |d_{(W_{k}, g_\infty)}(x,y)- d_{(M,g_i)}(x,y)|. 
$$
We observe now that
$$
|\lambda'_{i,j,k}-\lambda_{i,j,k}| \le \eta_{i,j,k}
$$
where
$$
\eta_{i,j,k}= \sup_{x,y\in W_j} |d_{(W_{k}, g_i)}(x,y)-d_{(W_{k}, g_\infty)}(x,y)|.
$$
So by the smooth convergence of $g_i$ to $g_\infty$ on $W_k$ we have
$$
(1+\epsilon_{i,k})^{-2} g_\infty \le g_i \le (1+\epsilon_{i,k})^2 g_\infty \textrm{ on } W_k \textrm{ where } 
\lim_{i\to \infty} \epsilon_{i,k}=0
$$
Thus for any curve, $C$, in $W_k$ we have
$$
(1+\epsilon_{i,k})^{-1} L_{g_\infty}(C) \le L_{g_i}(C)\le (1+\epsilon_{i,k}) L_{g_\infty}(C)
$$
Applying this to a $g_i$-minimizing curve  $C_i$ from $x$ to $y$ in $W_k$ we have
\begin{eqnarray*}
d_{(W_{k}, g_\infty)}(x,y) &\le& L_{g_\infty}(C_i)\le (1+\epsilon_{i,k}) L_{g_i}(C_i)\\
&=& (1+\epsilon_{i,k})d_{(W_{k}, g_i)}(x,y) \\
&\le & d_{(W_{k}, g_i)}(x,y)  +  \epsilon_{i,k} (D_0+\lambda_0)
\end{eqnarray*}
and 
applying this to a $g_\infty$-minimizing curve  $C_\infty$ from $x$ to $y$ in $W_k$ we have
\begin{eqnarray*}
d_{(W_{k}, g_i)}(x,y) &\le& L_{g_i}(C_\infty)\le (1+\epsilon_{i,k}) L_{g_\infty}(C_\infty)\\
&=& (1+\epsilon_{i,k})d_{(W_{k}, g_\infty)}(x,y)\\
&\le& d_{(W_{k}, g_\infty)}(x,y) + \epsilon_{i,k} (1+\epsilon_{i,k})(D_0+\lambda_0)\\
\end{eqnarray*}
because
$$
d_{(W_{k}, g_\infty)}(x,y) \le (1+\epsilon_{i,k})d_{(W_{k}, g_i)}(x,y)\le  (1+\epsilon_{i,k})(D_0+\lambda_0)
$$
Thus
$$
\eta_{i,j,k}\le \eta_{i,k}=\epsilon_{i,k}  (1+\epsilon_{i,k})(D_0+\lambda_0)(D_0+\lambda_0)
$$
and for fixed $k$,
$$
\lim_{i\to\infty} \eta_{i,k}=0.
$$
So
$$
\lim_{i\to\infty} \lambda'_{i,j,k}=\lim_{i\to \infty}\lambda_{i,j,k}.
$$
This leads to the reordering of the limits in our fixed definition of uniformly well embedded:
$$
\limsup_{j\to \infty}\limsup_{k\to \infty} \limsup_{i \to \infty} \lambda_{i,j,k}=0
$$
which will imply
$$
\limsup_{j\to \infty}\limsup_{k\to \infty} \limsup_{i \to \infty} \lambda'_{i,j,k}=0
$$
and thus
$$
\limsup_{j\to \infty}\limsup_{k\to \infty} \limsup_{i \to \infty} \bar{h}_{i,j,k}=0.
$$
}

Combining (\ref{Brian-Allen-concern}) with (\ref{settled-8}) we have for any $j<k$,
\be
d_{\mathcal{F}}\big((N,g_\infty),(M,g_i)\big)
\le
\left(\bar{h}_{i,j,k} + a_{i,j}\right) \Big(2 V_0+ 2 A_0\Big)+ 2V_j +F_j .
\ee

\textcolor{blue}{So now we should take $i \to \infty$ first.
Recall that for any fixed $j$, $\lim_{i \to \infty} \epsilon_{i,j}=0$,
thus $\lim_{i\to \infty} a_{i,j}=0$ as well. 
$$
\limsup_{i\to\infty} d_{\mathcal{F}}\big((N',g_\infty),(M,g_i)\big)
\le
\left(\bar{h}_{j,k} + 0 \right) \Big(2 V_0+ 2 A_0\Big)+ 2V_j +F_j + 0.
$$
where $\bar{h}_{j,k}=\limsup_{i\to \infty} \bar{h}_{i,j,k}$.
Next taking the limsup as $k \to \infty$ 
$$
\limsup_{i\to\infty} d_{\mathcal{F}}\big((N',g_\infty),(M,g_i)\big)
\le
\left(\bar{h}_{j} + 0 \right) \Big(2 V_0+ 2 A_0\Big)+0+0 .
$$
where $\bar{h}_{j}=\limsup_{k\to \infty} \bar{h}_{j,k}$.
Taking the limsup as $j \to \infty$ 
$$
\limsup_{i\to\infty} d_{\mathcal{F}}\big((N',g_\infty),(M,g_i)\big)
\le
\left(0 + 0 \right) \Big(2 V_0+ 2 A_0\Big)+ 0 +0=0.
$$
}
\end{proof}

\subsection{Codimension 2 Singular Sets \textcolor{blue}{has Errors}}

The following lemma combined with Theorem~\ref{flat-to-settled}
completes the proof of Theorem~\ref{codim-thm}.   

\begin{lem}\label{codim-2-lambda}
 Let $M$ be compact, \textcolor{blue}{$g_i\to g_\infty$ smoothly away from $S$ uniformly from below}
 where  $S$ is a closed
 submanifold of codimension $2$ and $\diam_{g_\infty}(M\setminus S)<\infty$ 
 \textcolor{blue}{$\diam_{g_i}(M)\le D_0$}
 then, 
any connected precompact exhaustion, $W_j$, 
of $M\setminus S$ is uniformly well embedded.   
\end{lem}

\textcolor{blue}{With the correction to Definition~\ref{well-embedded}
the original proof of this lemma is no longer correct.   We now prove this lemma
using the new definition of smooth convergence away from $S$ uniformly from below and the
adapted definition of uniformly well embedded.  The proof is similar to the
original proof but we must be careful to take the limits in the correct order.}

\begin{proof}
\textcolor{blue}{
Observe that 
\be
d_{(W_{k}, g_i)}(x,y)- d_{(M,g_i)}(x,y)\ge 0
\ee
because $W_k\subset M$ and so
\begin{eqnarray}
d_{(W_{k}, g_i)}(x,y) &=& \inf \{ L_{g_i}(C)\,|\,\, C:[0,1]\to W_k,\,C(0)=x,\,\,C(1)=y\}\\
&\ge & \inf \{ L_{g_i}(C)\,|\,\, C:[0,1]\to M,\,C(0)=x,\,\,C(1)=y\}\\
&=& d_{(M,g_i)}(x,y).
\end{eqnarray}
Thus 
\be
\lambda_{i,j,k}= \sup_{x,y\in W_j} \,\,d_{(W_{k}, g_i)}(x,y)- d_{(M,g_i)}(x,y) .  
\ee  
Since $\bar{W}_j$ is compact, there exists  $x_{i,j,k}, y_{i,j,k} \in \bar{W}_j$ achieving this supremum:
\be
\lambda_{i,j,k}=d_{(W_{k}, g_i)}(x_{i,j,k},y_{i,j,k})- d_{(M,g_i)}(x_{i,j,k},y_{i,j,k}).
\ee
Consider a subsequence $i' \to \infty$ such that
\be
\lim_{i'\to \infty} \lambda_{i',j,k}=\limsup_{i\to \infty} \lambda_{i,j,k}
\ee
and consider a further subsequence, also denoted $i'$ such that
\be
x_{i',j,k} \to x_{\infty,j,k} \textrm{ and } y_{i',j,k} \to y_{\infty, j, k} \in \bar{W}_j.
\ee
In particular, as $i'\to \infty$ for fixed $j,k$, we have
\be
d_{g_\infty, W_k}(x_{i',j,k}, x_{\infty,j,k}) \to 0 \textrm{ and } d_{g_\infty, W_k}(y_{i',j,k}, y_{\infty,j,k}) \to 0 
\ee
Since $g_i \to g_\infty$ on $W_k$ for fixed $k$, there exists $H_{i,j,k}>1$ such that
\be
H_{i,j,k}^{-1} \ge \frac{d_{g_i, W_k}(p,q)}{d_{g_\infty, W_k}(p,q)}\ge H_{i,j,k} \qquad \forall p,q\in W_j
\ee
where
\be
\lim_{i\to \infty} H_{i,j,k} =1 \textrm{ for fixed } j,k.
\ee
Thus as $i'\to \infty$ we have
\be
d_{g_i', W_k}(x_{i',j,k}, x_{\infty,j,k}) \le H_{i',j,k} \cdot d_{g_\infty, W_k}(x_{i',j,k}, x_{\infty,j,k}) \to 1 \cdot 0 =0
\ee
and
\be
d_{g_i', W_k}(y_{i',j,k}, y_{\infty,j,k}) \le H_{i',j,k} \cdot d_{g_\infty, W_k}(y_{i',j,k}, y_{\infty,j,k}) \to 1 \cdot 0 =0
\ee
Combining these with the triangle inequality we have
\be
 |d_{(W_{k}, g_{i'})}(x_{i',j,k},y_{i',j,k})- d_{(W_{k}, g_{i'})}(x_{\infty,j,k},y_{\infty,j,k})| \to 0.
 \ee
 Note in addition that
 \be
d_{g_{i'}, M}(p,q) \le 
d_{g_{i'}, W_k}(p,q) 
\ee
so as $i'\to \infty$ for fixed $j,k$ we have
 \be
d_{g_{i'}, M}(x_{i',j,k}, x_{\infty,j,k})  \to  0
\textrm{ and }
d_{g_{i'}, M}(y_{i',j,k}, y_{\infty,j,k}) \to 0.
\ee
Combining these with the triangle inequality we have
\be
 |d_{(M, g_{i'})}(x_{i',j,k},y_{i',j,k})- d_{(M, g_{i'})}(x_{\infty,j,k},y_{\infty,j,k})| \to 0.
 \ee
 Thus
 \be
\limsup_{i\to \infty} \lambda_{i,j,k}=\lim_{i'\to \infty} 
d_{(W_{k}, g_{i'})}(x_{\infty,j,k},y_{\infty,j,k})- d_{(M,g_{i'})}(x_{\infty,j,k},y_{\infty,j,k}).
\ee
Let $\gamma_{i',j,k}$ be a $g_{i'}$ minimizing geodesic in $M$ between
$x_{\infty,j,k}$ and $y_{\infty,j,k}$.  Since $S$ is
a submanifold of codimension $2$, for any $h_{i'}\in (0,D_0)$, we can find a curve
$C_{i',j,k}: [0,1]\to M\setminus S$ between these points such that
\be
|L_{g_{i'}}(C_{i',j,k}) - d_{M, g_{i'}}(x_{\infty,j,k}, y_{\infty,j,k})| < h_{i'}
\ee
by sliding $\gamma_{i',j,k}$ over slightly to avoid $S$.
By the \textcolor{blue}{new definition of smooth convergence away from $S$ uniformly from below}
we have 
\be
g_i\ge (1-\delta_i)^2 g_\infty \textrm{ on } M \setminus S.
\ee
Thus
\begin{eqnarray}
d_{M, g_{i'}}(x_{\infty,j,k}, y_{\infty,j,k}) &\ge&   (1-\delta_i) L_{g_\infty}(C_{i',j,k}) -  h_{i'}\\
&\ge&   (1-\delta_i) d_{(M\setminus S,g_\infty)}(x_{\infty,j,k}, y_{\infty,j,k}) -  h_{i'}.\\
\end{eqnarray}
Since we can choose $\lim_{i'\to \infty} h_{i'} =0$ and we have $\delta_i\to 0$, 
\be
\lim_{i'\to \infty} d_{M, g_{i'}}(x_{\infty,j,k}, y_{\infty,j,k}) 
\ge  d_{(M\setminus S,g_\infty)}(x_{\infty,j,k}, y_{\infty,j,k}).
\ee
Since $g_i \to g_\infty$ uniformly on $\bar{W}_k$, we also have
\be
\lim_{i'\to \infty} 
d_{(W_{k}, g_{i'})}(x_{\infty,j,k},y_{\infty,j,k})=
d_{(W_{k}, g_\infty)}(x_{\infty,j,k},y_{\infty,j,k}).
\ee
Combining these we have
 \be
\limsup_{i\to \infty} \lambda_{i,j,k}\le
d_{(W_{k}, g_\infty)}(x_{\infty,j,k},y_{\infty,j,k})
-  d_{(M\setminus S,g_\infty)}(x_{\infty,j,k}, y_{\infty,j,k}).
\ee
Now choose a subsequence $k'$ such that
\be
\limsup_{k\to \infty}\limsup_{i\to \infty} \lambda_{i,j,k}=\lim_{k'\to \infty}\limsup_{i\to \infty} \lambda_{i,j,k}
\ee
and choose a further subsequence $k'$ such that 
\be
x_{\infty,j,k} \to x_{\infty,j} \subset \bar{W_j} \textrm{ and } y_{\infty,j,k} \to y_{\infty,j} \subset \bar{W_j}
\ee
By the fact that $\bar{W_j}\subset W_k \subset M\setminus S$ and the triangle inequality,
 \be
\limsup_{k\to \infty} \limsup_{i\to \infty} \lambda_{i,j,k}\le
\limsup_{k'\to \infty} d_{(W_{k'}, g_\infty)}(x_{\infty,j},y_{\infty,j})
-  d_{(M\setminus S,g_\infty)}(x_{\infty,j}, y_{\infty,j}).
\ee
For any $\epsilon_{j}>0$ we have  a curve $C_{j}:[0,1]\to M\setminus S$ 
running from $C_j(0)=x_{\infty,j}$ to $C_j(1)=y_{\infty,j}$
such that
\be
L_{g_\infty}(C_{j})<d_{(M\setminus S,g_\infty)}(x_{\infty,j}, y_{\infty,j})+\epsilon_{j}.
\ee
Since $W_{k'}$ exhaust $M\setminus S$, for $k'$ sufficiently large depending on $j$ we 
have $C_j([0,1])\subset W_{k'}$, so
\be
d_{(W_{k'}, g_\infty)}(x_{\infty,j},y_{\infty,j}) \le L_{g_\infty}(C_{j}).
\ee
Thus
\be
\limsup_{k\to \infty} \limsup_{i\to \infty} \lambda_{i,j,k}\le \epsilon_{j}.
\ee
Finally we apply the fact that we can choose $\epsilon_j\to 0$ so that
\be
\limsup_{j\to \infty} \limsup_{k\to \infty} \limsup_{i\to \infty} \lambda_{i,j,k}\le \epsilon_{j}.
\ee
}
 \end{proof}

\section{Intrinsic Flat to GH convergence}\label{Sect-flat-to-GH}

There are occasions where one has volume controls as
in Theorem~\ref{flat-to-settled} but one would like to obtain a
Gromov-Hausdorff limit.     That is not always possible.
Example~\ref{ex-no-GH} has no Gromov-Hausdorff limit
despite satisfying the conditions of Theorem~\ref{flat-to-settled}.
In Example~\ref{ex-not-GH}
the Gromov-Hausdorff limits and intrinsic flat limits
do not agree. However the second author and Stefan Wenger
have shown in \cite{SorWen2}
that the Gromov-Hausdorff and intrinsic flat limits
agree when the sequence of manifolds has nonnegative
Ricci curvature or a uniform contractibility function:

\begin{defn}\label{defn-contractibility-function}
A function $\rho:[0,r_0]\to [0,\infty)$ is a contractibility
function for a manifold $M$ with metric $g$ if 
every ball $B_p(r)$ is contractible within $B_p(\rho(r))$.
\end{defn}

We review these results in the first subsection.

In the second subsection, we apply the results in
\cite{SorWen2} on sequences of
manifolds with a uniform contractibility function, proving
Theorem~\ref{c-smooth-to-GH} and Theorem~\ref{c-codim-thm}.  

In the third subsection we use additional properties of
of manifolds with Ricci curvature bounds to prove additional
theorems about Gromov-Hausdorff limits inspired by the
techniques in \cite{SorWen2}.
In particular we prove 
Theorem~\ref{Ricci-smooth-to-GH}
and  Theorem~\ref{Ricci-codim-thm}.

\subsection{Review of Convergence Theorems}

First recall that Gromov proved
a sequence of compact Riemannian manifolds has a
subsequence converging in the Gromov-Hausdorff sense
if there is a uniform bound on the number of disjoint balls
of radius $r$ that fit in the space \cite{Gromov-metric}.  This lead to two
compactness theorems:

\begin{thm}[Gromov] \cite{Gromov-metric}
A sequence of compact Riemannian manifolds, $(M_j,g_j)$,
such that $\diam(M_j) \le D$ and $Ricci_{M_j} \ge -H$, has
a subsequence converging in the Gromov-Hausdorff
sense to a metric space $(X,d)$. 
\end{thm}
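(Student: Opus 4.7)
The plan is to reduce the statement to the metric-space precompactness criterion quoted in the paragraph immediately preceding the theorem: if a sequence of compact metric spaces admits, for each small $r>0$, a uniform upper bound $N(r)$ on the number of pairwise disjoint balls of radius $r$, then a subsequence converges in the Gromov-Hausdorff sense to a compact limit space $(X,d)$. Thus the only task is to produce such a uniform covering estimate under the hypotheses $\diam(M_j)\le D$ and $\Ric_{M_j}\ge -(n-1)H\, g_j$.

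The key tool is Bishop-Gromov volume comparison. Under the lower Ricci bound, for every $p\in M_j$ the ratio $r\mapsto \vol(B_p(r))/V_H(r)$ is nonincreasing on $(0,\infty)$, where $V_H(r)$ denotes the volume of an $r$-ball in the simply connected model space of constant sectional curvature $-H$. Applying this comparison with the outer radius $D$, and using the diameter bound so that $B_p(D)=M_j$, we obtain
\be
\frac{\vol(B_p(r))}{\vol(M_j)} \;\ge\; \frac{V_H(r)}{V_H(D)} \qquad \forall \, p\in M_j,\ 0<r\le D.
\ee

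Next I would run the standard packing argument. If $B_{p_1}(r),\dots,B_{p_N}(r)\subset M_j$ are pairwise disjoint then
\be
N\cdot \vol(M_j)\cdot\frac{V_H(r)}{V_H(D)} \;\le\; \sum_{i=1}^N \vol(B_{p_i}(r)) \;\le\; \vol(M_j),
\ee
so $N\le V_H(D)/V_H(r)$. This bound depends only on $n$, $D$, $H$ and $r$, and in particular is independent of $j$. Hence the precompactness hypothesis is met uniformly in $j$, and Gromov's criterion delivers a subsequence converging in the Gromov-Hausdorff sense to a compact metric space $(X,d)$.

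The only nontrivial ingredient is the Bishop-Gromov inequality itself, which rests on Jacobi field and Bochner-type arguments and should simply be cited; once it is in hand, the remainder of the proof is a one-line volume-counting estimate followed by an application of the precompactness theorem stated just above. No new construction specific to this paper is required.
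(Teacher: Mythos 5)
Your proof is correct and is the standard argument; the paper does not actually prove this theorem but merely cites it from \cite{Gromov-metric} (having just recalled Gromov's packing-based precompactness criterion in the preceding paragraph). Your reduction via Bishop-Gromov volume comparison together with the disjoint-ball packing estimate is exactly the classical route, and the only remark worth adding is the cosmetic one that the paper writes the Ricci bound as $\Ricci \ge -H$ without the tensor, which you have correctly interpreted in the normalized form $\Ric \ge -(n-1)H\,g_j$ needed to invoke the model-space comparison volumes $V_H(r)$.
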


\begin{thm}[Greene-Petersen] \cite{Greene-Petersen}
A sequence of compact Riemannian manifolds, $(M_j,g_j)$,
such that $\vol(M_j) \le V$ and such that
there is a uniform contractibility
function,  $\rho:[0,r_0]\to [0,\infty)$, for all the $M_j$, has
a subsequence converging in the Gromov-Hausdorff
sense to a metric space $(X,d)$. 
\end{thm}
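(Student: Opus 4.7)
The plan is to verify the hypothesis of Gromov's precompactness theorem (stated just above) by showing a uniform packing bound: for every $r>0$ there is $N(r)$, independent of $j$, controlling the maximum number of disjoint balls of radius $r$ that fit inside $M_j$. Since disjoint balls $B(p_1,r),\ldots,B(p_k,r)\subset M_j$ satisfy $\sum_i \vol(B(p_i,r)) \le \vol(M_j) \le V$, such a packing bound follows immediately once we establish a uniform lower bound $\vol(B(p,r))\ge v(r)>0$ depending only on $n$, $r$, and the contractibility function $\rho$. Thus the whole theorem reduces to the following volume-to-contractibility key lemma.

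The key lemma asserts that for every compact Riemannian $n$-manifold $M$ with contractibility function $\rho:[0,r_0]\to[0,\infty)$, every $p\in M$, and every sufficiently small $s>0$, one has $\vol(B(p,s))\ge v(s)$, where $v(s)>0$ depends only on $n$ and $\rho$. I would prove this by a filling argument. Given $s$, choose $r>0$ with $\rho(r)\le s$. Approximate the metric sphere $\partial B(p,r)$ by a nearby Lipschitz $(n-1)$-cycle $\Sigma$ of mass comparable to $r^{n-1}$ (using that, for $r$ small, $p$ has a bi-Lipschitz chart by the contractibility function alone, or simply by constructing $\Sigma$ as the boundary of a small Lipschitz disk). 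The contractibility hypothesis provides a continuous null-homotopy of $\Sigma$ inside $B(p,\rho(r))\subset B(p,s)$. Triangulating $\Sigma$ finely and filling each simplex inductively via the contraction, one builds a singular Lipschitz $n$-chain $F$ in $B(p,s)$ with $\partial F = \Sigma$. By the Federer-Fleming isoperimetric inequality (or, equivalently, by Gromov's filling radius estimate applied to the small sphere), the existence of a filling of $\Sigma$ contained in $B(p,s)$ forces $\vol(B(p,s)) \ge \mass(F) \ge c(n)\, r^n$. Taking $r = \sup\{t : \rho(t)\le s\}$ gives the required positive $v(s)$.

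The main obstacle is the quantitative filling step: one needs to produce a filling $n$-chain whose mass is bounded below in terms of $r$ alone, so as to deduce a lower bound on $\vol(B(p,s))$ rather than merely an upper bound on it. The usual trick is to split into two cases. If $\vol(B(p,s))$ is already large compared to $r^n$, we are done. Otherwise, one applies the cone inequality of Federer-Fleming inductively on the skeleta of the contracting homotopy, at each stage controlling the mass of new simplices by the mass of the previous skeleton times $\rho(r)$; this gives a filling of controlled mass, and combining with the Euclidean lower bound on the filling volume of a sphere of radius $\asymp r$ (valid because $p$ has bounded local geometry at scale $\ll r$) yields the estimate. Once the key lemma is established, Gromov's precompactness theorem produces the convergent subsequence and hence the Gromov-Hausdorff limit $(X,d)$.
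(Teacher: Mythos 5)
The paper does not prove this result; it cites it directly from \cite{Greene-Petersen}, so there is no internal proof to compare against. Your high-level reduction is the standard one and is sound: a uniform lower bound $\vol(B(p,r)) \ge v(r) > 0$ depending only on $n$ and $\rho$, combined with $\vol(M_j)\le V$, gives a uniform packing bound (and hence a diameter bound, by stringing disjoint balls along a geodesic), after which Gromov's precompactness theorem yields the convergent subsequence.

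The gap is in your argument for the key lemma. Two steps do not hold as stated. First, the claim that $\partial B(p,r)$ can be approximated by a Lipschitz $(n-1)$-cycle $\Sigma$ of mass comparable to $r^{n-1}$ is unjustified: a contractibility function alone does not give a bi-Lipschitz chart near $p$ at a definite scale, and by the coarea formula one can always find $t\in(r/2,r)$ with $\mathcal{H}^{n-1}(\partial B(p,t))\le 2\vol(B(p,r))/r$, so in the regime you are trying to rule out (small ball volume) the available boundary mass is also small. Second and more seriously, the chain of inequalities $\vol(B(p,s)) \ge \mass(F) \ge c(n) r^n$ runs in the wrong direction on both sides. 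A singular Lipschitz $n$-chain $F$ with image in $B(p,s)$ can have mass far exceeding $\vol(B(p,s))$ because of multiplicity, so the first inequality fails for the filling you construct; and the Federer--Fleming isoperimetric inequality and Gromov's filling radius estimate are \emph{upper} bounds on minimal filling mass in terms of $\mass(\Sigma)$, not lower bounds, so they cannot force $\mass(F) \ge c(n)r^n$.

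The volume lower bound is exactly the non-trivial content of \cite{Greene-Petersen}, and it is proved by contradiction via a homotopy-theoretic obstruction, not by directly comparing the mass of a constructed filling to the ambient ball volume. One route builds, from an $\epsilon$-net, a nerve complex $K$ and maps $f:M\to K$ and $g:K^{(n)}\to M$ with $g\circ f\simeq \mathrm{id}_M$; the contractibility function controls the scale at which $g$ can be extended skeleton by skeleton, degree theory forces $g$ to be onto, and counting $n$-simplices with controlled images yields the lower volume bound. An alternative is to combine Gromov's filling radius inequality $\mathrm{FillRad}\le c_n\vol^{1/n}$ with the observation that a contractibility function obstructs small filling radius, but making this work for balls rather than closed manifolds requires a relative version and further care. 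Either way, the missing idea in your sketch is the contradiction argument: assume the ball is volume-thin, deform the local fundamental class off the ball via Federer--Fleming, and show this contradicts the non-triviality of local homology at the scale governed by $\rho$.
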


See Definition~\ref{defn-contractibility-function}.

In \cite{SorWen1} the following theorems were proven
which can be applied to deduce information about the Gromov-Hausdorff
limit of a sequence.

\begin{thm}[Sormani-Wenger] \label{thm-sw-Ricci}
If a sequence of oriented compact Riemannian manifolds, $(M_j,g_j)$,
such that $\diam(M_j) \le D$ and $Ricci_{M_j} \ge 0$
and $vol(M_j) \ge V_0  > 0$ converges in the Gromov-Hausdorff
sense to $(X,d)$, then it converges in the intrinsic flat sense
to $(X,d,T)$  (c.f. Theorem 4.16 of \cite{SorWen2}).
\end{thm}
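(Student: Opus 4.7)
The plan is to combine the general Gromov--Hausdorff-to-intrinsic-flat compactness theorem from \cite{SorWen2} with Bishop--Gromov volume comparison to rule out any loss of density in the limit. By Theorem~3.20 of \cite{SorWen2}, a sequence of oriented closed Riemannian manifolds with uniformly bounded volume and boundary volume that converges in the Gromov--Hausdorff sense to a compact metric space $(X,d)$ admits a subsequence converging in the intrinsic flat sense to an integral current space $(X',d',T)$ with $X' \subset X$ and $d'$ the restriction of $d$. The uniform volume bound is automatic here: by Bishop--Gromov, $\Ric \ge 0$ and $\diam \le D$ yield $\vol(M_j) \le \omega_n D^n$, and the manifolds have no boundary.

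The essential use of the hypotheses occurs in establishing $X' = X$. By Bishop--Gromov volume comparison, for every $p \in M_j$ and every $r \in (0, D]$,
\be
\frac{\vol(B_p(r))}{\omega_n r^n} \ge \frac{\vol(B_p(D))}{\omega_n D^n} = \frac{\vol(M_j)}{\omega_n D^n} \ge \frac{V_0}{\omega_n D^n}.
\ee
In particular $\vol(B_p(r)) \ge c\, r^n$ with $c = V_0/D^n$, uniformly in $p$ and $j$. To use this, realize both convergences simultaneously by isometric embeddings $\varphi_j : M_j \to Z$ and $\varphi : X \to Z$ into a common complete metric space, chosen so that $\varphi_j(M_j) \to \varphi(X)$ in the Hausdorff sense and the pushforward currents $T_j := \varphi_{j\#} T_{M_j}$ converge in the flat norm to $\varphi_\# T$. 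For any $x \in X$, pick $x_j \in M_j$ with $\varphi_j(x_j) \to \varphi(x)$; then for each fixed $r > 0$ and all large $j$, $B_{\varphi_j(x_j)}(r/2) \subset B_{\varphi(x)}(r)$, so
\be
\|T_j\|(B_{\varphi(x)}(r)) \ge \vol(B_{x_j}(r/2)) \ge c\,(r/2)^n.
\ee

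Finally, the hard part is to pass this uniform lower bound on ball masses through the flat limit. The combination of Bishop--Gromov upper and lower density bounds makes the currents $T_j$ uniformly Ahlfors regular, so the standard lower semicontinuity of mass for integral currents under flat convergence on uniformly non-collapsed sequences (in the sense of Ambrosio--Kirchheim) yields $\|T\|(\bar{B}_{\varphi(x)}(r)) > 0$ for all $r > 0$. Hence every $\varphi(x)$ has positive lower density with respect to $\|T\|$, so $\varphi(x) \in \set(\varphi_\# T) = \varphi(X')$, giving $X \subset X'$ and therefore $X' = X$. The subsequential intrinsic flat limit is then $(X,d,T)$ with $d$ restricted from the ambient $Z$; because this identification does not depend on the chosen subsequence and $T$ is determined by the orientations together with $(X,d)$, the entire sequence converges in the intrinsic flat sense to $(X,d,T)$. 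The principal obstacle is the transfer of the non-collapsing estimate from the $M_j$ to the limiting current, where both directions of the Bishop--Gromov comparison enter essentially.
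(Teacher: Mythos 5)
The step you flag yourself as ``the principal obstacle'' is in fact a genuine gap, and the ``standard'' result you invoke does not exist. Lower semicontinuity of mass under flat convergence goes the \emph{wrong} way for your purposes: if $T_j \to T$ in the flat norm, then for an open set $U$ one has $\|T\|(U) \le \liminf_j \|T_j\|(U)$, and for compact $K$, $\|T\|(K) \ge \limsup_j \|T_j\|(K)$ does \emph{not} hold in general (mass can disappear, e.g.\ through long thin tubes as in Example~\ref{ex-not-GH} of this paper, which shows splines of positive local mass vanishing in the flat limit). Thus your Bishop--Gromov lower bound $\|T_j\|(B_{\varphi(x)}(r)) \ge c (r/2)^n$ does not pass to the limit current: there is no Ahlfors-regularity-based semicontinuity theorem for integral currents that closes this hole. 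The Ricci hypothesis must enter precisely here to \emph{preclude} the disappearance of mass, and that requires a substantive argument, not a soft limiting one.

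What the paper (and \cite{SorWen1}) actually does at this juncture is different in a way that matters. In the proof of Proposition~\ref{prop-improve-sw} the authors avoid your step entirely by appealing to the Cheeger--Colding volume convergence theorem for noncollapsed Ricci limits, which gives $\vol_i(B_{y_i}(r)) \to \mathcal{H}^m(B_y(r))$ directly at the level of measures on the Gromov--Hausdorff limit; combined with the smooth convergence away from $S$ this produces a positive-density point $z_{r,\infty}$ near any $y$. In the original Theorem~4.16 of \cite{SorWen1} that you are quoting, the mechanism is a filling-volume argument: Gromov's filling inequality and a Ricci/noncollapsing lower bound on the filling volume of distance spheres force the intrinsic flat limit to retain mass near every GH-limit point, since filling volumes behave continuously under intrinsic flat convergence. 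Either of these routes replaces your nonexistent semicontinuity lemma with the geometric input that actually prevents mass from escaping. Everything up to and including your Bishop--Gromov density estimate and your use of Theorem~3.20 of \cite{SorWen2} is fine; the transfer of density to the limit current is where a genuinely new idea is needed.
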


This theorem is conjectured to hold with uniform lower bounds
on Ricci curvature \cite{SorWen2}.

\begin{thm}[Sormani-Wenger] \label{thm-sw-contractible}
If a sequence of oriented compact Riemannian manifolds, $(M_j,g_j)$,
with a uniform linear contractibility
function, $\rho: [0,\infty)\to [0,\infty)$
and a uniform upper bound on volume, $\vol(M_j)\le V$,
converges in the Gromov-Hausdorff
sense to $(X,d)$, then it converges in the intrinsic flat sense
to $(X,d,T)$ (c.f. Theorem 4.14 of \cite{SorWen2}).
\end{thm}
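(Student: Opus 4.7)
The plan is to use the Gromov-Hausdorff convergence to isometrically embed all $M_j$ and the limit $(X,d)$ into a common complete metric space $Z$, push the integration currents of $M_j$ forward to $Z$, and extract via Ambrosio-Kirchheim compactness an integer rectifiable flat limit $T$ supported on $\varphi(X)$. One then shows $(X,d,T)$ is a genuine integral current space and that $M_j$ in fact converges to it in intrinsic flat distance, not merely subsequentially. A convenient $Z$ is obtained from the Kuratowski $L^\infty$-embedding of a gluing of the $M_j$ and $X$ compatible with the GH $\varepsilon$-approximations.

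The first step is the compactness argument. Setting $T_j \in \intcurr_m(Z)$ to be the pushforward under $\varphi_j$ of the integration current of $M_j$, one has $\mass(T_j) = \vol(M_j) \le V$ and $\partial T_j = 0$, so by the Ambrosio-Kirchheim compactness theorem a subsequence converges in flat norm to a cycle $T \in \intcurr_m(Z)$. The Hausdorff convergence $\varphi_j(M_j) \to \varphi(X)$ forces $\spt(T) \subseteq \varphi(X)$, so the weights of $T$ live entirely on the prospective limit set.

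The crucial use of the linear contractibility function is to prevent collapse and guarantee $\set(T) = \varphi(X)$. For each $p \in X$ and small $r$, pick $q_j \in M_j$ with $\varphi_j(q_j) \to \varphi(p)$. Since $B_{q_j}(\rho(r))$ is contractible, a version of Gromov's filling volume inequality in contractible spaces, with constants depending only on the linear modulus $\rho(r) = Cr$, yields a uniform lower bound $\vol(B_{q_j}(r)) \ge c\, r^m$. This density bound passes to the flat limit, giving $\|T\|(B_{\varphi(p)}(r)) \ge c\, r^m$ at every $p \in X$, so $\set(T) = \varphi(X)$ and $(X,d,T)$ is a genuine integral current space in the sense of \cite{SorWen2}.

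The main obstacle will be upgrading subsequential flat convergence in $Z$ to intrinsic flat convergence of the whole sequence. For this I would construct an explicit filling between $M_j$ and $M_k$ using linear contractibility: take a fine $\varepsilon$-net $\{p_i\} \subset X$, lift it via GH $\varepsilon$-approximations to matched nets in $M_j$ and $M_k$, and cone corresponding simplices of the two nerves inside small contractible balls to produce an $(m{+}1)$-dimensional filling current. Linear growth of $\rho$ is essential here: a cone over an $m$-simplex of diameter $\varepsilon$ inside a ball of radius $C\varepsilon$ has volume $O(\varepsilon^{m+1})$, so the total filling and excess boundary masses are both $O(\varepsilon)$. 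Sending $j,k \to \infty$ first and then $\varepsilon \to 0$ shows the sequence is Cauchy in $d_\mathcal{F}$, and combined with the identification of the subsequential limit above the unique limit must be $(X,d,T)$.
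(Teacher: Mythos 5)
This is a cited result --- the paper attributes it to Sormani--Wenger and references Theorem~4.14 of \cite{SorWen2} rather than giving a proof --- so there is no ``paper's own proof'' here to compare against. Taking your sketch on its own merits, the central step has a real gap. You assert that the uniform lower volume bound $\vol_{g_j}(B_{q_j}(r))\ge c\,r^m$ (obtained, correctly, from the linear contractibility function via Greene--Petersen/Gromov filling-volume arguments) ``passes to the flat limit,'' giving $\|T\|(B_{\varphi(p)}(r))\ge c\,r^m$. It does not. Flat convergence gives only \emph{lower semicontinuity} of mass, $\mass(T)\le\liminf_j\mass(T_j)$, and the localized version $\|T\|(U)\le\liminf_j\|T_j\|(U)$ on open $U$; both are bounds in the wrong direction. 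Mass concentrated in a ball in $T_j$ can simply cancel or vanish in the flat limit (this is exactly what happens at cusp tips, see Example~\ref{ex-cusp}), so a density lower bound along the sequence does not by itself force a density lower bound on $T$. The quantity that survives flat convergence is the \emph{filling volume} of small spheres: from contractibility one obtains $\FV(\partial B_{q_j}(r))\ge c\,r^m$, and because filling volume is (semi)continuous under flat convergence of slices, this lower bound persists and forces $\|T\|$ to charge a neighborhood of $\varphi(p)$. This replacement of ``volume of balls'' by ``filling volume of spheres'' is the crux of the Sormani--Wenger argument and is the missing idea in your sketch.

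Your final step --- building an explicit filling between $M_j$ and $M_k$ by coning matched nerve simplices --- would also be very hard to make rigorous and is not needed. Matching an $\varepsilon$-net in $X$ to nets in $M_j$ and $M_k$ does not by itself produce a simplicial current equal to the integration current of the manifold; you would need additional degree-theoretic control tying the nerve to the manifold, and the manifolds need not be homeomorphic. The standard route avoids this entirely: Wenger's compactness theorem (uniform bounds on mass, boundary mass $=0$, and diameter --- the last coming for free from GH convergence) gives subsequential intrinsic flat limits supported inside the GH limit; the filling-volume argument identifies every subsequential limit with the same $(X,d,T)$; and convergence of the full sequence then follows from subsequential compactness plus uniqueness of the limit, with no explicit Cauchy estimate required.
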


Recall that, in general, the intrinsic flat limits and Gromov-Hausdorff limits
need not agree [Examples~\ref{ex-cusp} and~\ref{ex-not-GH}] because
intrinsic flat limits do not include points with $0$ density as in (\ref{eq-positive-density}).
In fact intrinsic flat limits may exist when Gromov-Hausdorff limits do not
[Example~\ref{ex-no-GH}].

\subsection{Sequences with Uniform Contractibility Functions}

Recall Definition~\ref{defn-contractibility-function}.
Here we apply the results in
\cite{SorWen2} on sequences of
manifolds with a uniform contractibility function, stating and proving
Theorem~\ref{c-smooth-to-GH} and Theorem~\ref{c-codim-thm}.  Recall 
Definitions~\ref{defn-smoothly-1}
and~\ref{well-embedded}.

\begin{thm}\label{c-codim-thm}
Let $M_i=(M,g_i)$ be a sequence of oriented compact Riemannian manifolds
with a uniform linear contractibility
function, $\rho$, which converges smoothly away from
a codimension two submanifold, $S$, \textcolor{blue}{uniformly from below}.
If there is a connected precompact exhaustion of
$M\setminus S$  as in (\ref{defn-precompact-exhaustion})
satisfying the volume conditions
\be \label{area-4}
\vol_{g_i}(\partial W_j) \le A_0
\ee
and
\be \label{not-vol-4}
\vol_{g_i}(M\setminus W_j) \le V_j \textrm{ where } \lim_{j\to\infty}V_j=0,
\ee
then
\be
\lim_{j\to \infty} d_{GH}(M_j, N)=0,
\ee
where $N$ is the settled and
metric completion of $(M\setminus S, g_\infty)$.\footnote{\textcolor{blue}{This theorem is now corrected to require the convergence to be uniformly from below.} }
\end{thm}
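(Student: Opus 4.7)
The plan is to combine the intrinsic flat convergence produced by Theorem~\ref{codim-thm} with the intrinsic-flat-to-Gromov--Hausdorff comparison of Theorem~\ref{thm-sw-contractible}, after using a Greene--Petersen type argument to extract a Gromov--Hausdorff convergent subsequence.

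First, I would verify the missing diameter hypothesis needed to invoke Theorem~\ref{codim-thm}. Theorem~\ref{c-codim-thm} does not state a uniform diameter bound, but the exhaustion volume conditions together with Lemma~\ref{lem-vol-vol} yield a uniform volume bound $\vol(M_i)\le V_0$. A uniform linear contractibility function $\rho(r)=Cr$ produces a lower bound of the form $\vol(B_p(r))\ge c\, r^n$ on small balls in each $M_i$ via the standard topological packing argument used in Greene--Petersen. The volume upper bound then limits the number of disjoint balls of fixed radius in $M_i$, which forces a uniform diameter bound $\diam(M_i)\le D_0$. Together with the codimension two assumption on $S$ and the volume conditions \eqref{area-4}, \eqref{not-vol-4}, this allows me to apply Theorem~\ref{codim-thm} and conclude that $M_j'\to N'$ in the intrinsic flat sense, where $N'$ is the settled completion of $(M\setminus S,g_\infty)$.

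Second, I would use Greene--Petersen compactness, whose hypotheses are exactly the uniform volume and linear contractibility bounds just established, to pass to a subsequence converging in the Gromov--Hausdorff sense to some compact metric space $(X,d_X)$. Applying Theorem~\ref{thm-sw-contractible} to this subsequence shows that it also converges in the intrinsic flat sense to $(X,d_X)$ equipped with an induced integral current structure. Uniqueness of intrinsic flat limits, combined with the conclusion of the previous paragraph, identifies $(X,d_X)$ isometrically with $N'$.

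The main obstacle is to establish $N'=N$, that is, to show that no points are lost when passing from the metric completion of $(M\setminus S,g_\infty)$ to its settled completion. I would argue that the Gromov--Hausdorff limit $X$ inherits a uniform linear contractibility function from the sequence, so that the same topological volume comparison used above gives a uniform lower bound $\mu(B_p(r))\ge c\, r^n$ for balls in $X$. Transporting this bound through the isometry $X\cong N'\subset \bar N$ shows that every point of $\bar N$ has positive lower density, hence $N'=N$. A standard subsequence argument then upgrades the Gromov--Hausdorff convergence along a subsequence to convergence of the full sequence: every subsequence of $M_j$ has a further Gromov--Hausdorff convergent subsequence, and by the preceding identification all such limits equal $N$, so $d_{GH}(M_j,N)\to 0$.
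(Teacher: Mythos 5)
Your proof follows essentially the same route as the paper, which derives this theorem by combining Theorem~\ref{c-smooth-to-GH} with Lemma~\ref{codim-2-lambda}; the paper's proof of Theorem~\ref{c-smooth-to-GH} uses exactly your ingredients (Lemma~\ref{lem-vol-vol} for the volume bound, Greene--Petersen for the diameter bound and GH precompactness, Theorem~\ref{flat-to-settled} for the intrinsic flat limit $N'$, and Theorem~\ref{thm-sw-contractible} plus uniqueness of flat limits to identify the GH and flat limits). The one place your write-up is slightly looser is the transport of the ball-volume lower bound from $X\cong N'$ to all of $\bar N$: the bound directly lands only on $N'$, and concluding $N'=\bar N$ needs either the observation that $N'$, being isometric to the compact GH limit, is complete hence closed in $\bar N$ and contains the dense set $M\setminus S$, or a short density-approximation argument — a minor gap, and the paper's own proof is terser still on this point.
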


\begin{thm}\label{c-smooth-to-GH}
Let $M_i=(M,g_i)$ be a sequence of compact oriented Riemannian manifolds
with a uniform linear contractibility
function, $\rho$, which converges smoothly away from
a singular set, $S$.  If there is a uniformly well embedded 
connected precompact exhaustion of
$M\setminus S$  as in (\ref{defn-precompact-exhaustion})
satisfying the volume conditions (\ref{area-4}) and (\ref{not-vol-4})
then
\be
\lim_{j\to \infty} d_{GH}(M_j, N)=0,
\ee
where $N$ is the settled and
metric completion of $(M\setminus S, g_\infty)$.
\end{thm}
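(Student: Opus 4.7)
The plan is to derive Gromov-Hausdorff convergence from the intrinsic flat convergence of Theorem~\ref{flat-to-settled}, using Theorem~\ref{thm-sw-contractible} as a bridge between the two notions. First, hypothesis (\ref{not-vol-4}) combined with Lemma~\ref{lem-vol-vol} yields a uniform volume bound $\vol_{g_i}(M) \le V_0$. The uniform linear contractibility function $\rho$ together with this volume bound makes $\{M_i\}$ Gromov-Hausdorff precompact by Greene-Petersen, so $\diam(M_i) \le D_0$ uniformly (if the diameters were unbounded then no subsequence could converge in GH sense to a compact space). In particular $\diam_{M_i}(W_j) \le D_0$ for all $i \ge j$.

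With this diameter bound in hand, all hypotheses of Theorem~\ref{flat-to-settled} are satisfied, so
\be
M_j' \Fto N',
\ee
where $N' = N$ by the hypothesis that the settled and metric completions of $(M \setminus S, g_\infty)$ coincide. Now fix any subsequence $\{M_{j_k}\}$; by Greene-Petersen, a further subsequence converges in the Gromov-Hausdorff sense to some compact metric space $(X, d)$. Since this subsequence still enjoys the uniform linear contractibility function $\rho$ and the volume bound $V_0$, Theorem~\ref{thm-sw-contractible} applies and upgrades the GH convergence to intrinsic flat convergence, $M_{j_k} \Fto (X, d, T)$ for some integral current structure $T$ with $\set(T) = X$. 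By uniqueness of intrinsic flat limits there is a current-preserving isometry $(X, d, T) \cong N$, which in particular gives an isometry of metric spaces $X \cong N$. Thus every subsequence admits a further subsequence converging in GH to $N$, so the whole sequence satisfies $M_j \GHto N$.

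The main conceptual obstacle is that intrinsic flat limits and Gromov-Hausdorff limits need not agree in general, as illustrated by Example~\ref{ex-cusp}: the intrinsic flat limit discards points of zero lower density while the GH limit retains them, and even worse, by Example~\ref{ex-no-GH} a GH limit need not exist when an intrinsic flat limit does. The uniform contractibility hypothesis is exactly what rules out these pathologies, and this is the content of Theorem~\ref{thm-sw-contractible}. The hypothesis that $N$ is simultaneously the metric and the settled completion of $(M \setminus S, g_\infty)$ is what permits the clean identification of the GH subsequential limit $X$ with $N$; without it one would only conclude that $X$ is isometric to the positive-density part of $\bar{M \setminus S}$.
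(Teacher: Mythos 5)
Your proof follows the paper's own route almost exactly: uniform volume from Lemma~\ref{lem-vol-vol}, uniform diameter from Greene--Petersen precompactness, intrinsic flat convergence to the settled completion from Theorem~\ref{flat-to-settled}, and then Theorem~\ref{thm-sw-contractible} to upgrade to Gromov--Hausdorff convergence. Your explicit subsequence argument (extract a GH limit $(X,d)$ from every subsequence, upgrade to intrinsic flat via Theorem~\ref{thm-sw-contractible}, identify with $N'$ by uniqueness of flat limits) is a slightly more careful unpacking of the paper's terse ``apply Theorem~\ref{thm-sw-contractible} to see that the flat limit and Gromov--Hausdorff limits agree.''

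One correction to flag. You treat the coincidence of the settled completion $N'$ and the metric completion $\bar{N}$ of $(M\setminus S, g_\infty)$ as a standing hypothesis of the theorem, and lean on it at the end to write $N' = N$. It is in fact a \emph{conclusion}: the paper's own proof ends with ``In particular the metric completion and the settled completion agree,'' and the parallel Ricci-curvature version establishes this through Proposition~\ref{prop-improve-sw}. As written, your argument proves $d_{GH}(M_j, N') \to 0$ for the settled completion $N'$, but leaves open whether $N'$ equals $\bar{N}$. The missing step is to observe that Theorem~\ref{thm-sw-contractible} forces the full GH limit $(X,d)$ to coincide, as a set, with the support of the flat limit current, so there can be no zero-density cusp points sitting in $\bar{N}\setminus N'$; combined with the fact that $(M\setminus S,g_\infty)$ embeds into $X$ with dense image (a consequence of the uniform embeddedness of the exhaustion and the vanishing excess volume), one gets $\bar{N} \subseteq X \cong N' \subseteq \bar{N}$ and hence equality. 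With that addition your argument is complete and matches the paper's.
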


\begin{rmrk} \label{c-necessity}
Example~\ref{ex-not-GH} has no uniform linear
contractibility near the singular set and the
Gromov-Hausdorff limit does not agree with the
intrinsic flat limit.   Examples~\ref{ex-no-GH} 
and~\ref{ex-not-bounded}, also satisfy
all the conditions of Theorem~\ref{c-codim-thm}
and~\ref{c-smooth-to-GH} except the existence
of a uniform linear contractibility function.
They have no Gromov-Hausdorff limit.   

The excess volume bound in (\ref{not-vol-2}) is shown to
be necessary in Example~\ref{ex-to-hemisphere} and
Example~\ref{ex-cap-cyl}.
The codimension two
condition of Theorem~\ref{c-codim-thm}
and the uniform embeddedness hypothesis of
Theorem~\ref{c-smooth-to-GH}
are seen to be necessary in Example~\ref{ex-to-torus-square}.
We believe we
have an example proving the necessity of
the uniform bound on the boundary volumes, (\ref{area-2}), 
and discuss this in Remark~\ref{maybe-contr-area}.
\end{rmrk}

\begin{rmrk}
It would be interesting to see whether the requirement
that the contractibility function is linear is a necessary
condition.  One might consider adapting the Example
by Schul and Wenger in the appendix of \cite{SorWen1}
to prove this.
\end{rmrk}

\begin{proof}
By Lemma~\ref{lem-vol-vol}, we have
\be \label{vol-4}
\vol(M_i) \le V_0.
\ee
This combined with the uniform contractibility function
allows us to apply the Greene-Petersen Compactness Theorem.
In particular we have a uniform upper bound on diameter:
\be\label{diam-4}
\diam(M_i) \le D_0,
\ee
We may now apply Theorem~\ref{flat-to-settled} to obtain
\be
    \lim_{j\to \infty} d_\mathcal{F}(M_j, N') =0
\ee
We then apply Theorem~\ref{thm-sw-contractible}
to see that the flat limit and Gromov-Hausdorff
limits agree due to the existence of the uniform
linear contractibility function
and the fact that the volume
is bounded below uniformly by the smooth limit.
In particular the metric completion and the
settled completion agree.
\end{proof}

We now easily prove Theorem~\ref{c-codim-thm}:

\begin{proof}
This theorem follows from Theorem~\ref{c-smooth-to-GH}
combined with Lemma~\ref{codim-2-lambda}.
\end{proof}

\subsection{Ricci curvature bounded below}

In this subsection we use additional properties 
of manifolds with Ricci curvature bounds to prove additional
theorems about Gromov-Hausdorff limits inspired by the
techniques in \cite{SorWen2}.
In particular we prove 
Theorem~\ref{Ricci-smooth-to-GH} and Theorem~\ref{Ricci-codim-thm}.  
Recall 
Definitions~\ref{defn-smoothly-1},~\ref{defn-precompact-exhaustion}
and~\ref{well-embedded}.

\begin{thm}\label{Ricci-smooth-to-GH}
Let $M_i=(M,g_i)$ be a sequence of oriented compact Riemannian manifolds
with uniform lower Ricci curvature bounds, 
\be
\Ricci_{g_i}(V,V)\ge (n-1)H \, g_i(V,V) \qquad \forall V \in TM_i
\ee
which converges smoothly away from
a singular set, $S\subset M$.  If there is a uniformly well embedded 
connected precompact exhaustion of
$M\setminus S$  as in (\ref{defn-precompact-exhaustion})
satisfying the volume conditions,
(\ref{area-2}) and (\ref{not-vol-2}),
and diameter bound (\ref{diam-2}),
then
\be
\lim_{i\to \infty} d_{GH}(M_i, N)=0
\ee
where $N$ is the settled and
metric completion of $(M\setminus S, g_\infty)$.
\end{thm}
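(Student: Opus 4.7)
The overall plan is to deduce Gromov-Hausdorff convergence from the intrinsic flat convergence already provided by Theorem~\ref{flat-to-settled}, using the Ricci curvature lower bound as a bridge to identify the two limits. The main obstacle is guaranteeing that the settled completion $N'$ output by the intrinsic flat convergence actually coincides with the metric completion $N$, since \emph{a priori} the intrinsic flat limit can be missing points of zero lower density, as Example~\ref{ex-cusp} illustrates.

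First I would apply Theorem~\ref{flat-to-settled}, whose hypotheses are in place verbatim, to obtain
\[
\lim_{i \to \infty} d_\mathcal{F}(M_i', N') = 0,
\]
where $N'$ is the settled completion of $(M \setminus S, g_\infty)$. Next, Gromov's compactness theorem, applied with $\Ricci_{g_i} \ge (n-1)H g_i$ and $\diam(M_i) \le D_0$, guarantees that every subsequence of $\{M_i\}$ admits a further subsequence $\{M_{i_k}\}$ converging in the Gromov-Hausdorff sense to a compact metric space $(Y, d_Y)$. Lemma~\ref{lem-vol-vol} supplies a uniform upper bound $\vol(M_i) \le V_0$, and smooth convergence on a fixed $W_{j_0}$ gives a uniform positive lower bound $\vol(M_i) \ge \tfrac{1}{2}\vol_{g_\infty}(W_{j_0}) > 0$ for all large $i$.

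To identify $Y$ with $N'$, I would invoke Proposition~\ref{prop-improve-sw}, the extension of Theorem~\ref{thm-sw-Ricci} to arbitrary lower Ricci bounds advertised in the introduction for exactly this purpose. The proposition upgrades Gromov-Hausdorff convergence of $\{M_{i_k}\}$ to intrinsic flat convergence with limit of the form $(Y, d_Y, T)$. By uniqueness of intrinsic flat limits, this space must agree with $N'$ as an integral current space, so $N'$ embeds isometrically into $Y$ carrying all of its mass.

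The final step is to promote this to the equality $N = N' = Y$, and this is where I expect the main technical work. Using Bishop-Gromov volume comparison on each $M_i$ with the Ricci lower bound, the diameter upper bound, and the uniform lower bound on total volume, I would produce a constant $c = c(H, D_0, V_0) > 0$ and a radius $r_0 > 0$ with $\vol(B_p(r)) \ge c r^n$ for every $p \in M_i$ and $r \le r_0$. This non-collapsing estimate passes to the Gromov-Hausdorff limit and forces every point of $Y$ to have positive lower density with respect to the mass measure of $T$; hence $Y$ is completely settled and $Y = N'$. On the other hand, the smooth convergence on each $W_j$ isometrically embeds the exhaustion into $Y$, so $Y$ contains the metric completion $N$, whence $N = N' = Y$. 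Since every subsequence of $\{M_i\}$ has a further subsequence converging in the Gromov-Hausdorff sense to $N$, the full sequence does as well, completing the proof.
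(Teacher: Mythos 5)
Your proposal follows the paper's route exactly at the top level: apply Theorem~\ref{flat-to-settled} to obtain intrinsic flat convergence to the settled completion $N'$, use Lemma~\ref{lem-vol-vol} to secure the uniform volume bound (\ref{prop-vol-sw}), and then pass through Proposition~\ref{prop-improve-sw}. The paper's proof is exactly these three citations and nothing else. However, you have misread what Proposition~\ref{prop-improve-sw} concludes. Its conclusion is not that Gromov--Hausdorff convergence upgrades to intrinsic flat convergence (that is Theorem~\ref{thm-sw-Ricci}, and only for $H=0$); the proposition's stated conclusion is precisely $d_{GH}(M_j,\bar N)\to 0$ together with $\bar N = N'$. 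In other words, once you have verified its hypotheses, the proposition already delivers the full statement of the theorem and there is no ``final step'' left.

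The extra work in your last paragraph is therefore redundant. It is also a partial re-derivation of the proof of Proposition~\ref{prop-improve-sw} itself, and if one were to rely on it in place of the proposition it would need more care: the passage from the Bishop--Gromov non-collapsing estimate on the $M_i$ to positive lower density of every point of $Y$ (with respect to the limit current's mass measure) is not automatic; the paper's proof of the proposition invokes Cheeger--Colding's theory of non-collapsed Ricci limit spaces to convert the volume bound on $M_i$ into a statement about $\mathcal{H}_m$-measure of balls in $Y$, and it carefully locates nearby points $z_{r,i}\in W_j$ and uses the smooth convergence of $g_i$ on $W_{j+1}$ to control the density in the $g_\infty$ metric. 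Your sketch ``this non-collapsing estimate passes to the Gromov--Hausdorff limit'' hides exactly this input. Likewise, the claim that ``smooth convergence on each $W_j$ isometrically embeds the exhaustion into $Y$'' is where the uniformly-well-embedded hypothesis actually gets used, and it deserves an explicit argument rather than an assertion. None of this is a gap in the overall proof, because the proposition already contains it, but your description of what you are doing is not accurate.
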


When $H=0$ this theorem is an immediate consequence of
Theorem~\ref{flat-to-settled}.  In fact we need no diameter 
assumption in that setting:

\begin{lem} \label{lem-Ricci-diam}
Suppose we have a sequence of compact manifolds, $M_i=(M, g_i)$
with nonnegative Ricci curvature and
\be 
\vol(M_i) \le V_0
\ee
converging smoothly
away from a singular set to $(M\setminus S, g_\infty)$
then
\be\label{diam-2a}
\diam_{M_i}(W_j) \le \diam(M_i) \le D_0 \qquad \forall i\ge j, 
\ee
\end{lem}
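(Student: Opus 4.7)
The plan is to apply Yau's theorem on linear volume growth for complete Riemannian manifolds with nonnegative Ricci curvature. The inequality $\diam_{M_i}(W_j)\le\diam(M_i)$ is immediate because both distances are measured in $(M,g_i)$ with the former restricting to pairs in $W_j$. So only the uniform upper bound $\diam(M_i)\le D_0$ requires work.

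First, I would obtain a uniform lower bound on the volume of a small geodesic ball about a fixed basepoint. I choose some index $j_0$ with $W_{j_0}\ne\emptyset$, a point $p\in W_{j_0}$, and a radius $r_0>0$ small enough that the closed $g_\infty$-ball $\overline{B_p^{g_\infty}(r_0)}$ is contained in $W_{j_0}$. Smooth convergence $g_i\to g_\infty$ on the precompact set $\overline{W_{j_0}}$ then forces $\vol_{g_i}(B_p^{M_i}(r_0))\to\vol_{g_\infty}(B_p^{g_\infty}(r_0))>0$, so there exist $i_0$ and $c_0>0$ with $\vol_{g_i}(B_p^{M_i}(r_0))\ge c_0$ for all $i\ge i_0$. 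In particular, the lower bound $c_0$ is uniform in $i$.

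The key step is to invoke Yau's theorem, which asserts that on any complete Riemannian $n$-manifold with $\Ric\ge 0$ the volume of a geodesic ball grows at least linearly in its radius, with the proportionality constant controlled from below by the volume of any fixed smaller ball. After the standard rescaling this yields
\[
\vol_{g_i}(B_p^{M_i}(R))\ge\frac{C_n\,c_0}{r_0}\,R \qquad\text{for } r_0\le R\le\diam(M_i),
\]
with $C_n>0$ depending only on $n=\dim M$; the argument (via Bishop-Gromov along a minimizing geodesic emanating from $p$) transfers without change to the compact setting of $M_i$ for radii up to $\diam(M_i)$. Setting $R=\diam(M_i)$ so that $B_p^{M_i}(R)=M_i$ and combining with $\vol(M_i)\le V_0$, one gets $\diam(M_i)\le r_0 V_0/(C_n c_0)=:D_0$ for all $i\ge i_0$; enlarging $D_0$ to absorb the finitely many indices $i<i_0$ (each $M_i$ is compact and hence has finite diameter) produces a uniform bound valid for the entire sequence. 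The main step to verify carefully is the quantitative, scale-invariant form of Yau's theorem in terms of $c_0$ and $r_0$, but this is a standard consequence of the original argument and is what I expect to be the only nontrivial piece of the proof.
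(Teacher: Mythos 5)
Your proof is correct and follows essentially the same route as the paper's: both arguments fix a small ball $B_p(r_0)$ in the good region with uniform volume lower bound from smooth convergence, then use the Bishop--Gromov annulus comparison under $\Ric\ge 0$ to deduce linear lower volume growth at scale $\diam(M_i)$, which contradicts $\vol(M_i)\le V_0$ if the diameters were unbounded. The only superficial difference is that you package the annulus estimate as a citation of Yau's linear volume growth theorem (correctly noting it applies at radii up to the diameter in the compact case), whereas the paper carries out the Bishop--Gromov computation inline and phrases it as a proof by contradiction.
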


\begin{proof}
Suppose not.  Let
$p\in W \subset \bar{W}\subset M\setminus S$ where $W$ is precompact
and let $q_i \in M_j$ such that $d_i=d_i(p,q_i) \to \infty$.
By smooth convergence on $W$, there exists $r_0>0$ such that
$B_p(r_0)\subset M_j$ smoothly converge to a ball in a smooth
Riemannian manifold.  In particular $\vol_{g_i}(B_p(r_0))\ge V_1$.
Then, by the Bishop-Gromov Volume Comparison Theorem, we have 
\begin{eqnarray}
V_0&\ge& \vol(B_q(d_i-r_0)) \\
&\ge& \frac{(d_i-r_0)^m}{(d_i+r_0)^m -(d_i-r_0)^m} \vol(Ann_q(d_i-r_0, d_i+r_0)) \\
&\ge & \frac{(d_i-r_0)^m}{d_i^m -(d_i-r_0)^m} \vol(B_p(r_0)) \\
&\ge & \frac{(d_i-r_0)^m}{2md_i^{m-1}r_0} V_1 
\end{eqnarray}
which gives a contradiction as $d_i \to \infty$.
\end{proof}

The lemma does not hold for a uniform lower bound on Ricci curvature
which is negative, as can be seen by taking a sequence of manifolds
approaching a complete noncompact hyperbolic manifold with finite
volume.

The following proposition handles the more general lower bounds on
Ricci curvature not addressed in \cite{SorWen1}:

\begin{prop} \label{prop-improve-sw}
Let $M_i=(M,g_i)$ be a sequence of oriented compact Riemannian manifolds with a uniform lower bound on Ricci curvature.
Suppose there is a connected precompact exhaustion of
$M\setminus S$  as in (\ref{defn-precompact-exhaustion})
satisfying the volume conditions
\be 
 \vol_{g_i}(M\setminus W_j) \le V_j \textrm{ where } \lim_{j\to\infty}V_j=0,
\ee
\be \label{prop-vol-sw}
\vol(M_i) \le V_0
\ee
and 
\be
\diam_(M_i) \le D_0 \qquad \forall i\ge j.
\ee
If $M_i$ converge smoothly away 
away from $S$ to $N=(M\setminus S, g_\infty)$.
Suppose also that $(M, g_i)$ converge in the
intrinsic flat sense to $N'$ where $N'$ is the settled
completion of $(M\setminus S, g_\infty)$.  Then
\be
d_{GH}(M_j, \bar{N}) \to 0
\ee
and the metric completion satisfies, $\bar{N}=N'$.
\end{prop}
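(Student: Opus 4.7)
The plan is to adapt the proof of Theorem~\ref{thm-sw-Ricci} from \cite{SorWen1} to handle the general Ricci lower bound $\Ricci_{g_i}\ge(n-1)H g_i$ rather than only $H=0$, using the uniform diameter bound to keep Bishop-Gromov effective. First I would invoke Gromov's compactness theorem to extract a subsequence $M_{i_k}$ converging in Gromov-Hausdorff sense to a compact metric space $(Y,d_Y)$. Since $\vol(M_i)\le V_0$ and $\partial M_i=\emptyset$, Theorem 3.20 of \cite{SorWen2} then yields a further subsequence converging in intrinsic flat sense to $(X,d_Y|_X,T)$ with $X\subset Y$ closed. Uniqueness of intrinsic flat limits combined with the hypothesis $M_i\Fto N'$ identifies $(X,d_Y|_X,T)\isom N'$ as integral current spaces.

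Next I would show $X=Y$ via Bishop-Gromov. The smooth convergence on any fixed $W_{j_0}$ with $\vol_{g_\infty}(W_{j_0})>0$ gives $\vol(M_i)\ge V_0'>0$ for $i$ large, so for $p\in Y$ and any approximating sequence $p_{i_k}\in M_{i_k}$ with $p_{i_k}\to p$, the Bishop-Gromov inequality comparing with the model of constant sectional curvature $H$ gives
\begin{equation}\label{eq-BG-plan}
\vol_{g_{i_k}}(B_{p_{i_k}}(r)) \;\ge\; \frac{V_H(r)}{V_H(D_0)}\,\vol(M_{i_k}) \;\ge\; c\,V_H(r)
\end{equation}
for a uniform $c>0$ and all $r\in(0,D_0]$, where $V_H(r)$ denotes the volume of the $r$-ball in the model. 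Weak lower-semicontinuity of mass measures under intrinsic flat convergence (applied on closed balls $\bar B_p(r+\epsilon)$ to accommodate basepoint approximation, then letting $\epsilon\to 0$) transfers this bound to the limit, yielding $\|T\|(\bar B_p(r))\ge cV_H(r)$. Since $V_H(r)/r^n\to\omega_n$ as $r\to 0$, $p$ has positive lower density, hence $p\in\set(T)=X$. Thus $X=Y$ and $(Y,d_Y)\isom N'$. Since every Gromov-Hausdorff convergent subsequence of $M_j$ has limit isometric to $N'$, the entire sequence satisfies $d_{GH}(M_j,N')\to 0$.

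Finally I would verify $\bar N=N'$. The inclusion $N'\subset\bar N$ is immediate from the definition of settled completion. For the reverse, given $p\in\bar N$ with Cauchy approximation $p_k\in N$, each $p_k$ lies in some $W_{j_k}$ and admits a $g_i$-approximation $q_{k,i}\in M_i$ with $q_{k,i}\to p_k$ smoothly as $i\to\infty$. Applying \eqref{eq-BG-plan} at $q_{k,i}$ and passing to the limit in $i$ through smooth convergence on $W_{j_k}$ yields $\vol_{g_\infty}(B_{p_k}(r))\ge cV_H(r)$, uniformly in $k$ for small $r$. For $k$ large enough that the induced-length distance $d_\infty(p_k,p)<r/2$, the inclusion $B_p(r)\supset B_{p_k}(r/2)$ gives $\vol_{g_\infty}(B_p(r))\ge cV_H(r/2)$, and dividing by $r^n$ and letting $r\to 0$ produces positive lower density at $p$, so $p\in N'$. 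The main obstacle is the mass-measure bound in the second step: intrinsic flat convergence alone does not imply weak-$\ast$ convergence of the mass measures, so one must exploit the common sequence of isometric embeddings witnessing both the Gromov-Hausdorff and intrinsic flat convergence simultaneously and use the uniform Bishop-Gromov estimate to push the volume lower bound through the limit.
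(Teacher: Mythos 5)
Your opening moves agree with the paper: Gromov compactness gives a Gromov-Hausdorff subsequential limit $(Y,d)$, Theorem~3.20 of \cite{SorWen2} gives an intrinsic flat subsequential limit $(X,d,T)$ with $X\subset Y$, and uniqueness of flat limits identifies $(X,d,T)\isom N'$. You also correctly observe, in passing, that the whole argument only needs to be carried out along a subsequence because of uniqueness. The divergence comes in the key step: you want to push the Bishop-Gromov volume lower bound into the limit by ``weak lower-semicontinuity of mass measures under intrinsic flat convergence'' and then, having noticed that this is not a valid general principle, you wave at ``exploiting the common sequence of isometric embeddings'' without saying what that argument would actually be. That vague remedy does not close the gap: even with $M_i$ and the limit isometrically embedded in a common $Z$ and converging in flat norm there, local mass measures can drop (that is exactly why cusp and spline examples lose points in the settled completion), so a pointwise lower bound on $\|T\|(\bar B_p(r))$ does not follow from the flat convergence alone.

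The paper closes exactly this gap with two ingredients you do not use. First, it does not rely on flat convergence at all for the mass bound; it invokes the Cheeger-Colding noncollapsing theory \cite{ChCo-PartI}, which, under Gromov-Hausdorff convergence with a Ricci lower bound and $\vol(M_i)\ge V_0'>0$, gives honest volume \emph{convergence} of balls, $\vol_{g_i}(B_{y_i}(r))\to\mathcal H_m(B_y(r))$, not merely semicontinuity. Second --- and this is the step missing entirely from your proposal --- it uses the hypothesis $\vol_{g_i}(M\setminus W_j)\le V_j\to 0$ to guarantee that, once $j$ is large enough relative to the Bishop-Gromov lower bound on $\vol_{g_i}(B_{y_i}(r))$, the intersection $W_j\cap B_{y_i}(r)$ has positive $g_i$-volume and hence is nonempty. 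This produces a point $z_{r,i}\in W_j\cap B_{y_i}(r)$ lying in the good region where $g_i\to g_\infty$ smoothly. Passing to a sublimit $z_{r,\infty}\in\bar W_j\subset W_{j+1}$ and using smooth convergence on $W_{j+1}$ transfers the uniform volume lower bound to $g_\infty$-balls around $z_{r,\infty}$; since $d(z_{r,\infty},y)<r$, taking $r\to 0$ shows $y$ lies in both the metric completion and the settled completion of $(M\setminus S,g_\infty)$. In short, the density argument is run inside the exhaustion via smooth convergence, not through the flat limit, and the $V_j\to 0$ hypothesis is precisely what makes this possible. Your proposal never uses $V_j\to 0$ in the density step and never introduces the auxiliary point $z_{r,i}$, so as written the middle of the argument fails.

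Your final paragraph on $\bar N=N'$ is also reorganized relative to the paper: the paper gets this as a byproduct of showing $Y\subset N'$ (once every $y\in Y$ has positive density in $\bar N$, and since $N'=X\subset Y$, one concludes $Y=N'$ and the metric and settled completions coincide), rather than by a separate Cauchy-sequence argument. Your version is not wrong in spirit, but it again leans on transferring volume bounds through limits, so it inherits the same issue if the density estimate is not first established via the $W_j$ mechanism. If you insert the Cheeger-Colding volume convergence and the $z_{r,i}\in W_j\cap B_{y_i}(r)$ selection into your second step, the rest of your outline goes through.
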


\begin{proof}
By Gromov's Compactness theorem, we know that
a subsequence of the Riemannian manifolds $M_i=(M, g_i)$
converge to a compact metric space $(Y,d)$.  
Thus a subsequence of the manifolds converges in the
intrinsic flat sense to an integral current space, $(X,d,T)$,
where $X\subset Y$ \cite{SorWen2} [Thm 3.20].

By Theorem~\ref{flat-to-settled} and the
fact that intrinsic flat limits are unique, we know that the
settled completion of $(M\setminus S, g_\infty)$
is $(X,d,T)$.  In particular one needs no subsequence
to obtain the flat limit.

In the case where the sequence of metrics has nonnegative
Ricci curvature, Theorem~\ref{thm-sw-Ricci} implies that
$X=Y$.  In particular the settled completion is the metric completion
and so the Gromov-Hausdorff limit is the metric completion
of $(M\setminus S, g_\infty)$ and no subsequence was needed.

When the sequence of manifolds has a negative uniform
lower bound on Ricci curvature, we may imitate the proof of
Theorem~\ref{thm-sw-Ricci} which appears in\cite{SorWen1}.  
We must show that every $y\in Y$
lies in the settled completion of $(M\setminus S, g_\infty)$.

First observe that by the smooth convergence of $g_i$
away from $S$, we know the volumes are uniformly
bounded below:
\be
\vol_i(M_i)\ge V_0.
\ee
Thus we can apply the noncollapsing
theory of Cheeger-Colding \cite{ChCo-PartI} to see that
after possibly taking another subsequence of $(M, g_i)$
we can control the volumes of the limit space's balls:
For all $y \in Y$, there exists $y_i \in M$ such that
\be
\lim_{i\to\infty} \vol_i(B_{y_i}(r))=\mathcal{H}_m(B_y(r))\ge V_0 (r/D_0)^m>0,
\ee
where $B_y(r)\subset Y$ and $\mathcal{H}_m$
is the $m$ dimensional volume.   In particular, for $i$
sufficiently large
\be
 \vol_i(B_{y_i}(r))\ge (V_0/2) (r/D_0)^m>0.
\ee

Now we choose $j$ sufficiently large (depending on $r$), so that
\be
V_j< (V_0/4) (r/D)^m.
\ee
Then
\begin{eqnarray}
\vol_i(W_j \cap B_{y_i}(r))&\ge & \vol_i(B_{y_i}(r)) - \vol_i(M\setminus W_j)\\
&>& (V_0/4) (r/D_0)^m>0.
\end{eqnarray}
Thus there exists
\be\label{choose-z}
z_{r,i}\in W_j\cap B_{y_i}(r) \, \subset \, M\setminus S.
\ee
and
\be
\vol_i(W_j\cap B_{z_{r,i}}(2r)) \ge \vol_i(W_j \cap B_{y_i}(r)).
\ee
Since  $z_{r,i}\subset W_j \subset \bar{W}_j$, a subsequence of
the $z_{r,i}$ converge to $z_{r,\infty}\in \bar{W}_j \subset W_{j+1}$.
Since $g_i$ converge smoothly to $g_\infty$ on $W_{j+1}$,
\be \label{smooth-volume}
\vol_i(W_j\cap B_{z_{r,i}}(2r)) \to
\vol_\infty(W_j\cap B_{z_{r,\infty}}(2r)).
\ee
Thus
\be
\vol_\infty(W_j\cap B_{z_{r,\infty}}(2r))\ge (V_0/4) (r/D_0)^m>0
\ee
Note that by (\ref{choose-z}), taking the Gromov-Hausdorff
limit we see that
\be
d(z_{r,\infty}, y) < r.
\ee
So $y$ is in the metric completion of $(M\setminus S, g_\infty)$.
Furthermore
\begin{eqnarray}
\vol(B_y(3r)\cap (M\setminus S))
&\ge &\vol(B_y(3r)\cap W_j)\\
&\ge &\vol(B_{z_{r,\infty}}(2r)\cap W_j)\\
&\ge& (V_0/4) (r/D_0)^m>0
\end{eqnarray}
so $y$ is in the settled completion of $(M\setminus S, g_\infty)$.

In particular the settled completion is the metric completion
and so the Gromov-Hausdorff limit is the metric completion
of $(M\setminus S, g_\infty)$ and no subsequence was needed.
\end{proof}

We may now prove 
Theorem~\ref{Ricci-smooth-to-GH}:

\begin{proof}
The hypothesis (\ref{diam-2}),
 (\ref{area-2}) and (\ref{not-vol-2}),
allow us to apply Theorem~\ref{flat-to-settled}.   So 
$(M_i, g_i)$ has an intrinsic flat limit and that this intrinsic
flat limit is the settled completion of $(M\setminus S, g_\infty)$.
By Lemma~\ref{lem-vol-vol}, we
have (\ref{prop-vol-sw}).   Thus by
Proposition~\ref{prop-improve-sw}, the Gromov-Hausdorff
and Intrinsic Flat limits agree. 
\end{proof}

We now prove Theorem~\ref{Ricci-codim-thm} which
was stated in the introduction:

\begin{proof}
This theorem follows from Theorem~\ref{Ricci-smooth-to-GH}
combined with Lemma~\ref{codim-2-lambda}.
\end{proof}

\begin{rmrk} \label{Ricci-necessity}
The Ricci curvature condition is necessary in Theorem~\ref{Ricci-codim-thm}
as can be seen in
Example~\ref{ex-not-GH} and in Example~\ref{ex-no-GH}, which has no 
Gromov-Hausdorff limit.   
The excess volume bound in (\ref{not-vol-2}) is shown to
be necessary in Example~\ref{ex-to-hemisphere}.
All these examples satisfy the uniform embeddedness hypothesis
of Theorem~\ref{Ricci-smooth-to-GH} and demonstrate the necessity of these
conditions in that theorem as well.   
By Lemma~\ref{lem-Ricci-diam}, the diameter hypothesis
is not necessary when the Ricci curvature is nonnegative
although the volume condition is still necessary as seen in
Example~\ref{ex-cap-cyl}.
Otherwise we see this is a necessary condition in Example~\ref{ex-diam-now}.
We were unable to find an example proving the necessity of
the uniform bound on the boundary volumes, (\ref{area-2}), 
and suggest this as an open question in Remark~\ref{no-ex-Ricci-area}.
The codimension two condition of Theorem~\ref{Ricci-codim-thm}
and the 
uniform embeddedness hypothesis of Theorem~\ref{Ricci-smooth-to-GH}
are seen to be necessary for their respective theorems
in Example~\ref{ex-to-torus-square}.  
\end{rmrk}

\section{Appendix: Example of Brian Allen}

Brian Allen sketched out this example to the second author and we have filled in the details.   This example is highly technical and understanding the convergence requires modern methods .

\begin{example}
Let $g_0$ the standard flat metric on $M={\mathbb S}^1 \times {\mathbb S}^1 \times {\mathbb S}^1$.
Let 
\be
S={\mathbb S}^1 \times \{0\}\times \{0\} \subset M
\ee
which is a submanifold of codimension $2$.
Let $r: M \to [0, \infty)$ be the distance function from $S$:
\be
r(x)=\min\{ d_{g_0}(x, y):\, y\in S\}
\ee
and let $h_i:[0, \infty) \to [1/2, 1]$ be a smooth nonincreasing function which satisfies
\be
h_i(r) = 1/2  \textrm{ for } r\le 1/i  \textrm{ and } h_i(r) = 1  \textrm{ for } r\ge 2/i.
\ee
Taking
\be
g_i = h_i(r(x))^2 g_0
\ee
we have a sequence of Riemannian metrics on $M$ such that $g_i \to g_0$ smoothly
on compact sets in $M\setminus S$.  \footnote{\textcolor{blue}{Since
$\sup_{x \in M\setminus S} h_i(r(x)) -1 = 1/2$ for all $ i \in {\mathbb{N}}$,
we see that $g_i$ does not converge to $g_0$ on $M\setminus S$ uniformly from below.}}

We claim that
\be \label{Brian-Allen-metric}
\textrm{ the metric completion of }(M\setminus S, d_{g_0}) \textrm{ is isometric to }
(M, d_{g_0}).
\ee
 This can be seen since any geodesics in $(M, d_{g_0})$ can be approximated
by curves in $(M\setminus S, d_{g_0})$ that are arbitrarily close in length since $S$ has codimension 2.
Observe however that by the triangle inequality,
\be
d_{g_i} (p,q) \le d_{g_i}(p,p') + d_{g_i}(p',q') + d_{g_i}(q',q),
\ee
Since $g_i \le g_0$ everywhere and $g_i=(1/2)^2 g_0$ on $S$
and $S$ is a convex set with respect to $g_0$, we have
\be
d_{g_i} (p,q) \le d_\infty(p,q)
\ee
where
\be \label{Brian-1}
d_\infty(p,q)=\min\{ d_{g_0}(p,q), \, d_{g_0}(p,p') + (1/2)d_{g_0}(p',q') + d_{g_0}(q',q):\, p',q'\in S\}.
\ee
On the other hand we claim
\be
d_\infty(p,q) \ge d_{g_i}(p,q) - 3/j.
\ee
To see this take $C_i$ a $g_i$-minimizing geodesic
from $p$ to $q$, and take $p_i$ the first point on $C_i$ where it enters $r^{-1}[0, 1/i]$
and $q_i$ to be the last point in that set.  Then since $g_i \ge (1/2)^2g_0$
on $r^{-1}[0, 1/j]$ and $g_i=g_0$ elsewhere we have
\begin{eqnarray}
d_{g_i}(p,q)&=& d_{g_i}(p,p_j)+d_{g_i}(p_i,q_i) + d_{g_i}(q_i,q)\\
&\ge &d_{g_0}(p,p_i)+(1/2) d_{g_0}(p_i,q_i) + d_{g_0}(q_i,q)
\end{eqnarray}
Taking $p'_i, q'_j \in S$ closest to $p_i,q_i$ respectively, we know
\be
d_{g_0}(p_i',p_i) \le 1/i \textrm{ and } d_{g_0}(q_i',q_i) \le 1/i .
\ee
So
\begin{eqnarray}
d_{g_0}(p,p_i) &\ge& d_{g_0}(p,p'_i) -1/i\\
d_{g_0}(p_i,q_i) &\ge& d_{g_0}(p'_i,q'_i) - 2/i\\
d_{g_0}(q_i,q) &\ge& d_{g_0}(q,q'_i) - 1/i
\end{eqnarray}
Thus we have our claim because
\begin{eqnarray}
d_{g_i}(p,q)
&\ge &d_{g_0}(p,p'_i)+(1/2) d_{g_0}(p'_i,q'_i) + d_{g_0}(q'_i,q) + 3/i \\
&\ge& d_\infty(p,q).
\end{eqnarray}
So in fact we have $d_i$ converges pointwise to $d_\infty$.   Following the arguments
in the first two papers of Allen-Sormani applying the Appendix to Huang-Lee-Sormani
and the fact that 
\be
(1/2) d_{g_0}(p,q) \le d{g_i}(p,q)\le d_{g_0}(p,q)
\ee
we get uniform, intrinsic flat, and Gromov-Hausdorff convergence of
\be
(M, d_{g_i}) \to (M, d_\infty)
\ee
which according to (\ref{Brian-Allen-metric}) is not the metric completion of $(M\setminus S, g_0)$
even though $g_i\to g_0$ on compact sets away from $S$.
\end{example}

\begin{rmrk}
This example is a counter example to the original statement of Theorem~\ref{codim-thm} because 
$M_i=(M,g_i)$ is a sequence of compact oriented Riemannian manifolds
such that $S$ is a codimension $2$ submanifold
and we can choose a connected precompact exhaustion,
\be
W_j = r^{-1}[2/j, \infty) \subset M \setminus S
\ee
satisfying (\ref{defn-precompact-exhaustion})
\be 
\bar{W}_j \subset W_{j+1} \textrm{ with } 
\bigcup_{j=1}^\infty W_j=M\setminus S
\ee
with $g_i$ converge smoothly to $g_0$ on each $W_j$, in fact $g_i=g_0$ for $i>j$.
Furthermore
\be
\diam_{M_i}(W_j) \le \diam_{g_0}(M)=D_0 \qquad \forall i\ge j, 
\ee
\be \label{m-area}
\vol_{g_i}(\partial W_j) \le \vol_{g_i}(\partial W_j)=4\pi (2/j) \pi
\ee
and
\be \label{m-edge-volume}
\vol_{g_i}(M\setminus W_j) \le \vol_{g_0}(M\setminus W_j) = (4/3)\pi (2/j)^2 \pi =V_j \textrm{ where } \lim_{j\to\infty}V_j=0.
\ee
However
\be
\lim_{j\to \infty} d_{\mathcal{F}}(M_j', N')=0.
\ee
where  $N'$ is the settled completion
of $(M\setminus S, g_0)$. 
\end{rmrk}

\begin{rmrk}
This example is not a counter example to Theorem~\ref{Ricci-codim-thm} because
of the highly negative sectional and Ricci curvature near $S$.   
\end{rmrk}

\begin{rmrk}
This example is not a counter example to Theorem~\ref{flat-to-settled} because $M/S$
is not uniformly well embedded as defined in the new Definition~\ref{well-embedded}.  Consider
a pair of points $p,q\in M \setminus S$ and $p',q'\in S$ such that
\be
d_\infty(p,q) =d_{g_0}(p,p_i)+(1/2) d_{g_0}(p',q') + d_{g_0}(q_i,q)
< d_{g_0}(p,q).
\ee
Taking any connected precompact exhaustion $W_j$ of $U=M\setminus S$, we
can take $j>k$ sufficiently large that $p,q\in W_j \subset W_k$.
We can take $i$ sufficiently large depending on $j>k$ such that 
\be
W_j \cap r^{-1}[0, 1/(2i_{j,k})] = \emptyset.
\ee
Then
\begin{eqnarray}
\lambda_{i,j,k}&=& \sup_{x,y\in W_j} |d_{(W_{k}, g_i)}(x,y)- d_{(M,g_i)}(x,y)|  \\ 
&\ge & |d_{(W_{k}, g_i)}(p,q)- d_{(M,g_i)}(p,q) | \\
&\ge &  d_{g_0}(p,q)- d_{g_i}(p,q).  
\end{eqnarray}
By the pointwise convergence proven in the example we have
\be
\limsup_{i\to \infty} \lambda_{i,j,k} = d_{g_0}(p,q) - d_\infty(p,q)
\ee
so 
\be
\limsup_{j\to \infty}\limsup_{k\to \infty} \limsup_{i \to \infty} \lambda_{i,j,k}
\ge d_{g_0}(p,q) - d_\infty(p,q)>
0
\ee
and we fail to satisfy (\ref{lambda-ijk-2-2020}).
\end{rmrk}

\bibliographystyle{AMSalpha}
\bibliography{2011}

\end{document}